\definecolor{darkred}{rgb}{.7,0,0}
\definecolor{darkgreen}{rgb}{0,0.7,0}
\definecolor{darkblue}{rgb}{0,0,0.7}
\newcommand*{\defeq}{\coloneqq}
\newcommand*{\qefed}{\eqqcolon}
\newcommand*{\Expect}{\mathbb{E}}
\newcommand*{\Lip}{\mathop{\mathrm{Lip}}\nolimits}
\newcommand*{\Naturals}{\mathbb{N}}
\newcommand*{\odeflow}{\Phi}
\newcommand*{\numflow}{\Psi}
\newcommand*{\ProbSpace}{\Omega}
\newcommand*{\quark}{\setbox0\hbox{$x$}\hbox to\wd0{\hss$\cdot$\hss}}
\newcommand*{\rd}{\mathrm{d}}
\newcommand*{\Reals}{\mathbb{R}}
\newcommand*{\tstep}{\tau}
\newcommand*{\norm}[1]{\Vert #1 \Vert}
\newcommand*{\innerprod}[2]{\langle #1 , #2 \rangle}
\newcommand*{\absval}[1]{\vert #1 \vert}
\newcommand*{\Norm}[1]{\left\Vert #1 \right\Vert}
\newcommand*{\Innerprod}[2]{\left\langle #1 , #2 \right\rangle}
\newcommand*{\Absval}[1]{\left\vert #1 \right\vert}
\newtheorem{assumption}[theorem]{Assumption}
\journalname{Statistics and Computing}
\begin{document}

\title{Strong convergence rates of probabilistic integrators for ordinary differential equations 
\thanks{HCL and TJS are supported by the Freie Universit\"{a}t Berlin within the Excellence Initiative of the German Research Foundation. HCL is supported by the Universit\"{a}t Potsdam.
AMS is grateful to DARPA, EPSRC and ONR for funding. 
This material was based upon work partially supported by the National Science Foundation under Grant DMS-1127914 to the Statistical and Applied Mathematical Sciences Institute.
Any opinions, findings, and conclusions or recommendations expressed in this material are those of the author(s) and do not necessarily reflect the views of these funding agencies and institutions.}
}
% \subtitle{Do you have a subtitle?\\ If so, write it here}

\titlerunning{Strong convergence of probabilistic integrators}        % if too long for running head

\author{Han Cheng Lie
 \and
         A. M. Stuart
         \and 
         T. J. Sullivan 
}

\authorrunning{Lie, Stuart, and Sullivan} % if too long for running head

\institute{Han Cheng Lie \at
              Institute of Mathematics, Universit\"{a}t Potsdam, Campus Golm, Haus 9, Karl-Liebknecht Str. 24-25, 14476 Potsdam OT Golm, Germany\\
              \url{https://orcid.org/0000-0002-6905-9903}\\
%               Tel.: +123-45-678910\\
%               Fax: +123-45-678910\\
              \email{hanlie@uni-potsdam.de}           %  \\
%             \emph{Present address:} of F. Author  %  if needed
           \and
              A. M. Stuart\at
              Department of Computing and Mathematical Sciences, California Institute of Technology, 1200 East California Boulevard, Pasadena, CA 91125, United States of America\\
              \email{astuart@caltech.edu}
              \and
           T. J. Sullivan\at
              Freie Universit\"{a}t Berlin, and Zuse Institute Berlin, Takustrasse 7, 14195 Berlin, Germany\\
              \email{sullivan@zib.de}
}

\date{Received: September 28, 2018 / Accepted: December 28, 2018 / Handling Editor: C.\ J.\ Oates \vskip1ex This is a post-peer-review, pre-copyedit version of an article published in Statistics and Computing (2019). The final authenticated version is available online at: \url{https://doi.org/10.1007/s11222-019-09898-6}.}
% The correct dates will be entered by the editor

\maketitle

\begin{abstract}
Probabilistic integration of a continuous dynamical system is a way of systematically introducing discretisation error, at scales no larger than errors introduced by standard numerical discretisation, in order to enable thorough exploration of possible responses of the system to inputs.
It is thus a potentially useful approach in a number of applications such as forward uncertainty quantification, inverse problems, and data assimilation.
We extend the convergence analysis of probabilistic integrators for deterministic ordinary differential equations, as proposed by Conrad et al.\ (\textit{Stat.\ Comput.}, 2017), to establish mean-square convergence in the uniform norm on discrete- or continuous-time solutions under relaxed regularity assumptions on the driving vector fields and their induced flows.
Specifically, we show that randomised high-order integrators for globally Lipschitz flows and randomised Euler integrators for dissipative vector fields with polynomially-bounded local Lipschitz constants all have the same mean-square convergence rate as their deterministic counterparts, provided that the variance of the integration noise is not of higher order than the corresponding deterministic integrator. These and similar results are proven for probabilistic integrators where the random perturbations may be state-dependent, non-Gaussian, or non-centred random variables.

\keywords{probabilistic numerical methods \and ordinary differential equations \and convergence rates \and uncertainty quantification}
% \PACS{PACS code1 \and PACS code2 \and more}
\subclass{\\ 65L20 \and 65C99 \and 37H10 \and 68W20}
% 65L20, %% NUMERICAL ANALYSIS > Ordinary differential equations > Stability and convergence of numerical methods
% 65C99, %% NUMERICAL ANALYSIS > Probabilistic methods, simulation and stochastic differential equations > None of the above, but in this section
% 37H10, %% DYNAMICAL SYSTEMS AND ERGODIC THEORY > Random dynamical systems > Generation, random and stochastic difference and differential equations
% 68W20 %% COMPUTER SCIENCE > Algorithms > Randomized algorithms
\end{abstract}

\section{Introduction}
\label{sec:introduction}

This article concerns the analysis of probabilistic numerical integrators for deterministic initial value problems of the form
\begin{align}
	\label{eq:the_ivp}
	\frac{\mathrm{d}}{\mathrm{d} t} u(t)  &= f(u(t)), \quad \text{for $0 \leq t \leq T$,}
	\\
	u(0) &= u_{0}\in\Reals^{d}
	\notag
\end{align}
where $T > 0$. Let $\odeflow^{t} \colon \Reals^{d} \to \Reals^{d}$ denote the flow induced by \eqref{eq:the_ivp}, so that 
\begin{equation}
	\label{eq:odeflow}
	\odeflow^{t}(u_{0}) = u_{0} + \int_{0}^{t} f(\odeflow^{s}(u_{0}) ) \, \rd s
\end{equation}
for all $(t, u_{0}) \in [0, T] \times \Reals^{d}$. Given an integration time step $\tstep > 0$ such that $K \defeq T / \tstep \in \Naturals$ and the corresponding time mesh
\begin{equation}
	\label{eq:time_grid}
	t_{k} \defeq k \tstep \text{ for } k \in [K] \defeq \{ 0, 1, \dotsc, K \} ,
\end{equation}
a deterministic one-step numerical method for the solution of \eqref{eq:the_ivp} is a numerical flow map $\numflow^{\tstep} \colon \Reals^{d} \to \Reals^{d}$ that generates approximations $\widetilde{u}_{k} \approx u_{k} \defeq u(t_{k})$ by the recursion $\widetilde{u}_{k} \defeq \numflow^{\tstep}(\widetilde{u}_{k-1})$;
note that $u_{k} = \odeflow^{\tstep}(u_{k - 1})$.
A key property of the numerical method is its global order of convergence, i.e.\ the largest $q \geq 0$ such that, for some constant $C=C(T)$, independent of $\tstep$,
\begin{equation}
	\label{eq:model_for_convergence}
	\max_{k\in[K]} \norm{ u_{k} - \widetilde{u}_{k} }\leq C \tstep^{q}.
\end{equation}
As a modelling choice, epistemic stochasticity can be introduced into the numerical solution of \eqref{eq:the_ivp} on the basis that, while the exact solution satisfies 
\begin{equation}
	\label{eq:integrand_g}
	u_{k+1} = \odeflow^{\tstep}(u_{k}) = u_{k} + \int_{t_{k}}^{t_{k+1}} f(u(s)) \, \rd s
\end{equation}
for all $k \in [K-1]$, the only information available about the values of the solution off the time mesh comes from the numerical solution on the mesh, and so the integrand $f(u(s))$ is not exactly accessible.
This uncertainty is relevant in the setting where, given a large-scale mathematical model, it may be more statistically informative to spend computational resources on solving a differential equation-based model many times on a coarser grid than on solving the same model a few times on a finer grid.
This is often the case in forward uncertainty quantification \citep{Smith:2014,Sullivan:2015}, inverse problems \citep{KaipioSomersalo:2005,Stuart:2010}, and data assimilation \citep{LawStuartZygalakis:2015,ReichCotter:2015};
the area of multi-level Monte Carlo methods makes particular use of this kind of cost-accuracy tradeoff \citep{Giles:2015}.
Furthermore, in many such settings, the quantity of interest is often not the solution of a differential equation-based model, but a functional thereof.
In all cases, estimates of the off-mesh uncertainty due to the numerical method can and should be fed forward to estimate the uncertainty in the quantity of interest.

This article is motivated by the work of \citet{Conrad:2016}, in which one seeks to model the off-mesh uncertainty by considering probabilistic solvers.
For the same mesh given in \eqref{eq:time_grid}, the probabilistic solver of \citet{Conrad:2016} involves producing a sequence of random variables $(U_{k})_{k\in [K]}$ according to
\begin{equation}
	\label{eq:randomised_numerical_scheme_0}
	U_{k+1} \defeq \numflow^{\tstep} (U_{k}) + \xi_{k}(\tstep), \quad U_{0}=u_{0},
\end{equation}
where $\numflow^{\tstep}$ is the map associated to the deterministic numerical method, and each $\xi_{k}(\tstep)$ is an i.i.d.\ copy of a random variable $\xi_{0}(\tstep) \defeq \int_{0}^{\tstep} \chi_{0}(s) \, \rd s$, where $\chi_{0}$ is a stochastic process over the time interval $[0, \tstep]$ that models the off-mesh behaviour of the unknown function $f(u(s))$ in \eqref{eq:integrand_g}. We refer the reader to \citet[Figure 2]{Conrad:2016} for a pictorial representation of \eqref{eq:randomised_numerical_scheme_0}.
The process $\chi_{0}$ is introduced so that one can probe the uncertainty induced by the mesh $(t_{k})_{k\in[K]}$ and the underlying solver, and thus explore possible responses of the system to inputs. Comparing \eqref{eq:integrand_g} and \eqref{eq:randomised_numerical_scheme_0}, it follows that the random variable $\xi_k(\tstep)$ is a statistical model for the approximation error $\odeflow^{\tstep}(u_k)-\numflow^{\tstep}(u_k)$.

We emphasise that the additive, state-independent noise model appearing in \eqref{eq:randomised_numerical_scheme_0} should be interpreted as providing a prior on the local truncation error \citep{HairerNorsettWanner:2009}.
A frequent criticism levelled at the field of probabilistic numerical integration is that the statistical properties of the noise $\xi_{k}$ that have been imposed in existing published works do not reflect known prior information about local truncation error.
Here we address this issue by considerably weakening the assumptions made on the $\xi_{k}$.
However we anticipate future work in this direction, especially when specific structure on the vector field $f$ is used to further inform the prior.
Note also that, in the presence of large amounts of data, we expect posterior contraction and forgetting of the prior;
see, e.g., \citet{Knapik:2011}.
Posterior contraction for \eqref{eq:randomised_numerical_scheme_0} was demonstrated numerically on a number of examples by \citet{Conrad:2016}.

In the spirit of \eqref{eq:model_for_convergence}, the main convergence result \citep[Theorem 2.2]{Conrad:2016} yields that, if the vector field $f$ in \eqref{eq:the_ivp} is globally Lipschitz, if the deterministic numerical method has uniform local truncation of order $q+1$, and if $\chi_{0}$ is a centred Gaussian process such that the second moment of $\xi_{0}(\tstep)$ decays as $\tstep^{2 p + 1}$ for some $p \geq 1$, then
\begin{equation}
	\label{eq:intro_convergence_sup_outside}
	\max_{ k\in [K]}\Expect \bigl[ \norm{u_{k}-U_{k}}^2 \bigr]\leq C \tstep^{2 \min\{p, q\}}.
\end{equation}
This shows that the convergence rate of the probabilistic solver \eqref{eq:randomised_numerical_scheme_0} is determined by the convergence of the `worst-case error' of the deterministic method $\numflow^{\tstep}$, and the convergence of the `statistical error' $\xi_{0}$, as described by the parameters $q$ and $p$ respectively.
Choosing $\xi_{0}$ with $p = q$ introduces the maximum amount of solution uncertainty consistent with preserving the order of accuracy of the original deterministic integrator.

It is important to stress that, despite the apparent similarities between
\eqref{eq:randomised_numerical_scheme_0} and Euler--Maruyama schemes for stochastic differential equations (SDEs) driven by Brownian motion, the analysis of the latter does not directly apply to probabilistic solvers, even though we will borrow some techniques from that field.
This is because the variance of $\xi_{0}(\tstep)$ for probabilistic solvers of the form \eqref{eq:randomised_numerical_scheme_0} is assumed to decay to zero strictly faster than $\tstep$, whereas, for SDEs driven by Brownian motion, the variance is proportional to $\tstep$.
A key aspect of this work is to determine how to scale the noise so that the rate of convergence of the underlying deterministic numerical integrator is not affected, yet uncertainty arising from numerical approximation is accounted for.

\subsection{Contribution and outline of the paper}

The purpose of this paper is to make significant extensions of the convergence analysis of \citet{Conrad:2016} for \eqref{eq:the_ivp}.
We accomplish this by obtaining stronger error estimates (and hence stronger convergence results) under assumptions on both the underlying differential equation and on the noise model for probabilistic numerical integration that are weaker than their counterparts in \citet{Conrad:2016}.
The convergence results of this paper are of the form
\begin{equation}
	\label{eq:intro_convergence_sup_inside}
	\Expect \biggl[ \max_{k \in [K]} \norm{ u_{k} - U_{k} }^{n} \biggr] \leq C \tstep^{n\cdot\min\{ p-c, q\}},
\end{equation}
where $n\in\Naturals$, $q$ is the order of the numerical method $\numflow^{\tstep}$, $p$ is an exponent of decay in the moments of the random variables $(\xi_{k}(\tstep))_{k\in[K]}$, and $c\geq 0$ is a penalty term in the convergence rate that depends solely on the random variables $(\xi_{k}(\tstep))_{k\in[K]}$.
Note that, when $c=0$ and $n=2$, the convergence rate of $n\min\{p-c,q\} $ on the right-hand side of \eqref{eq:intro_convergence_sup_inside} agrees with that of \eqref{eq:intro_convergence_sup_outside} shown by \citet{Conrad:2016}, so that the right-hand sides of \eqref{eq:intro_convergence_sup_outside} and \eqref{eq:intro_convergence_sup_inside} differ only in the constant prefactor $C$.
However, because the time supremum is inside the expectation, \eqref{eq:intro_convergence_sup_inside} implies \eqref{eq:intro_convergence_sup_outside}.
Furthermore, by Markov's inequality, \eqref{eq:intro_convergence_sup_inside} yields an estimate of the frequentist coverage of the true solution $u$ by the randomised solutions $U$:
\begin{equation*}
	%\label{eq:intro_frequentist_coverage}
	\mathbb{P} \biggl[ \max_{k \in [K]} \norm{ u_{k} - U_{k} } \leq r \biggr] \geq 1 - C \tstep^{n\min\{ p-c,q\}} r^{-n} ;
\end{equation*}
such estimates are useful in the context of forward uncertainty quantification and inverse problems \citep{LieSullivanTeckentrup:2018}.

We emphasize that, in addition to strengthening the form of the convergence results so that the supremum is inside the expectation, we also prove the results in this paper under weaker assumptions on the vector field $f$, and under weaker assumptions on the
noise $\xi_{k}$, than those employed by \citet{Conrad:2016}.
Specifically
we do not assume that $f$ and its derivatives are globally bounded, and we do not assume that the random variables are Gaussian;
furthermore in results generalizing \eqref{eq:intro_convergence_sup_inside} we relax the assumption that the random variables are centred, paving the way for future analyses which incorporate specific known structure and bias in the truncation error.

Error estimates like \eqref{eq:intro_convergence_sup_inside} show that the randomised numerical solution has convergence properties that are asymptotically no worse than the deterministic numerical solution.
This can be interpreted as saying that the trajectories obtained from the randomised numerical integrator are all equally valid approximations to the solution of the original system, modulo the uncertainty induced by solving in discrete time.
This can be useful for many purposes, for example in studying limits on predictability in chaotic systems, as shown for the Lorenz-63 system by \citet{Conrad:2016}.

After introducing some notation and auxiliary results in Section~\ref{sec:setup}, the rest of the paper is organised as follows.
In Section~\ref{sec:lipschitz}, Theorem~\ref{thm:strong_error_sup_inside} yields \eqref{eq:intro_convergence_sup_inside} for numerical methods of arbitrary order, for vector fields $f$ whose induced flow maps $\odeflow^{\tstep}$ are globally Lipschitz --- including one-sided Lipschitz vector fields --- and for collections $(\xi_{k}(\tstep))_{k\in[K]}$ of random variables that are independent and centred, but not necessarily Gaussian.
\citet{Conrad:2016} assumed the vector field $f$ to be globally Lipschitz, and the random variables $(\xi_{k}(\tstep))_{k\in[K]}$ were assumed to be i.i.d.\ centred Gaussian random variables.
In Theorem~\ref{thm:convergence_without_iid_mean_zero_noise}, we prove a result similar to \eqref{eq:intro_convergence_sup_inside} in which we relax the assumption that the $(\xi_{k}(\tstep))_{k\in[K]}$ are independent and that they are centred;
the price we pay for these weaker constraints on the noise is a stronger decay assumption, with respect to the time-step, on the second moments of the $(\xi_{k}(\tstep))_{k\in[K]}$. We use this assumption in order to introduce the maximal noise that is consistent with retaining the rate of convergence of the underlying deterministic numerical integrator.

In Section~\ref{sec:dissipative}, we further weaken the conditions on the vector field $f$, by considering locally Lipschitz vector fields that satisfy a polynomial growth condition.
In Theorem~\ref{thm:strong_error_sup_inside_polynomial_growth_and_bounded_noise}, we show that, under the assumption that the $(\xi_{k}(\tstep))_{k\in[K]}$ are almost surely bounded, we can again obtain \eqref{eq:intro_convergence_sup_inside}.
In Theorem~\ref{thm:convergence_without_iid_mean_zero_noise_implicit_euler}, we remove the almost-sure boundedness condition, but add the assumption that the vector field $f$ satisfies a generalised dissipativity condition.

In Section~\ref{sec:additional_results} we discuss a continuous-time analogue of \eqref{eq:randomised_numerical_scheme_0}, and show how convergence results of the form \eqref{eq:intro_convergence_sup_inside} can be obtained.
We also show that there exists a nonempty set of random variables (or more generally, stochastic processes) that satisfy the regularity assumptions on the random variables $(\xi_{k}(\tstep))_{k\in[K]}$ used throughout this paper. 

Proofs of the results may be found in Appendix \ref{sec:appendix}.

\subsection{Review of probabilistic numerical methods}

Continuous relationships such as ODEs and PDEs are commonplace as forward models in uncertainty quantification problems, or as Bayesian likelihoods in modern statistical inverse problems \citep{KaipioSomersalo:2005,Stuart:2010}, and in particular in data assimilation algorithms with critical everyday applications such as numerical weather prediction \citep{LawStuartZygalakis:2015,ReichCotter:2015}.
The use of a discretised solver for such forward models is usually unavoidable in practice, but introduces an additional source of uncertainty both into forward propagation of uncertainty and into subsequent inferences.
While the solution to the ODE/PDE may not be random in the frequentist sense, it is nonetheless only imperfectly known through the discretised numerical solution.
Probability in the subjective or Bayesian sense is one appropriate means of representing this epistemic uncertainty, particularly if the ODE/PDE solution forms part of the forward model in a Bayesian inverse problem.
Failure to properly account for discretisation errors and uncertainties can lead to biased, inconsistent, and over-confident inferences \citep{Conrad:2016}.

Probabilistic numerical solutions of problems such as the solution of ODEs have a long history.
Modern foundations for this field were laid by the work of \citet{Diaconis:1988}, \citet{OHagan:1992}, and \citet{Skilling:1992} under the term of ``Bayesian numerical analysis''.
More recently, such ideas have received renewed attention under the term ``probabilistic numerics'' \citep{HennigOsborneGirolami:2015,Cockayne:2017b}:
the discussion of probabilistic numerical methods for ordinary differential equations given by \citet{Schober:2014,Conrad:2016,Chkrebtii:2016}, and \citet{Teymur:2018} is particularly relevant here.
Also of interest in the field of probabilistic numerics, but not directly relevant to the present work, are probabilistic numerical methods for linear algebra \citep{Hennig:2015}, optimisation \citep{Gonzalez:2016}, partial differential equations \citep{Cockayne:2017a,Owhadi:2015,Owhadi:2017,Wang:2018}, and quadrature \citep{Briol:2015}.
In particular, \citet{Cockayne:2017b} sets out some axiomatic foundations for probabilistic numerical methods broadly conceived, and in particular what it means for a probabilistic numerical method to be ``Bayesian''.

Randomised solutions of ODEs have also been studied in the context of stochastic or rough differential equations.
In the case of non-autonomous ODEs driven by Carath{\'e}odory vector fields --- i.e.\ vector fields that are locally integrable in time and continuous in the state space --- it has been observed that randomised Euler and Runge--Kutta methods outperform their deterministic counterparts: see e.g.\ \citet{Stengle:1990,JentzenNeuenkirch:2009}, and \citet{KruseWu:2017} and the references therein.

We note that analysing the convergence properties of numerical solutions to \eqref{eq:the_ivp} in terms of the approximation error for the solution, as in \eqref{eq:intro_convergence_sup_outside} and \eqref{eq:intro_convergence_sup_inside}, is very much in the spirit of classical numerical analysis.
For uncertainty quantification of the discretised solution of \eqref{eq:the_ivp} as a stand-alone forward problem, this viewpoint is often sufficient.
However, for applications to inverse problems and data assimilation, in which the numerical solution of the \eqref{eq:the_ivp} is used to (approximately) evaluate the data misfit or likelhood, an alternative paradigm is to directly examine the impact of discretisation upon the quality of later inferences using e.g.\ Bayes factors \citep{CapistranChristenDonnet:2016,Christen:2017}.
There is also the well-established literature of information-based complexity and average-case analysis, with its greater emphasis on algorithmic aspects such as computational cost and optimal accuracy for given classes of information \citep{Novak:1988,Ritter:2000,TraubWozniakowski:1980,TraubWasilkowskiWozniakowski:1983}.

\section{Setup and notation}
\label{sec:setup}

%% Section: Setup and Notation

% \subsection{Notation}

Let $(\ProbSpace, \mathcal{F}, \mathbb{P})$ be a probability space sufficiently rich to serve as a common domain of definition for all the random variables and processes under consideration, and let $\Expect$ denote expectation with respect to $\mathbb{P}$.
The space of $s$\textsuperscript{th}-power integrable random variables over $(\ProbSpace, \mathcal{F}, \mathbb{P})$ will be denoted $L^{s}_{\mathbb{P}}$.
The scalars $C$, $C'$, etc.\ denote non-negative constants whose value may change from occurence to occurence, but are independent of the time step $\tstep > 0$.
$\Lip(\Phi)$ denotes the best Lipschitz constant of $\Phi \colon
\Reals^{d}\to\Reals^{d}$:
\[
	\Lip(\Phi) \defeq \min \{L \geq 0 \mid \norm{ \Phi(x) - \Phi(y) } \leq L \norm{ x - y } \}
\]
for all $x, y \in \Reals^{d}$.
We let $\Naturals$ denote the natural numbers beginning with $1$, and
$\Naturals_{0} \defeq \Naturals \cup \{ 0 \}$.
We shall sometimes abuse notation and write $[K] \defeq \{0, 1, \dotsc, K - 1 \}$
or $[K] \defeq \{ 1, 2, \dotsc, K \}$, and we shall write $u_{k} \defeq u(t_{k}) \equiv \odeflow^{\tstep}(u_{k - 1})$ for the value of the exact solution to \eqref{eq:the_ivp} at time $t_{k}$.
We denote the minimum of a pair of real numbers $a$ and $b$ by $a\wedge b=\min\{a,b\}$.

It will be assumed throughout that $T > 0$ is a fixed, deterministic time, and that $f$ in \eqref{eq:the_ivp} is sufficiently smooth such that \eqref{eq:the_ivp} has a unique solution for every initial condition $u_0$.
The flow map $\odeflow^{t}$ associated to \eqref{eq:the_ivp} is defined in \eqref{eq:odeflow}, and the output of a one-step deterministic numerical integration method for a given $x$ and time step $\tstep$ will be given by $\numflow^{\tstep}(x)$.
This setting encompasses many of the time-stepping methods in common use, such as Runge--Kutta methods of all orders.

% When the solution of \eqref{eq:the_ivp} is merely a component of a larger statistical inference procedure, e.g.\ when it is desired to infer governing parameters or initial conditions of \eqref{eq:the_ivp} from noisy partial observations of the solution at positive time, it is essential to understand the evolution of the error $u_{k} - U_{k}$ in order to arrive at accurate inferences.
% In this paper, we consider \emph{random} approximations $U_{k}$, the quality of which will be assessed using the mean-square error, uniformly in time, i.e.
% \[
% 	\Expect \biggl[ \max_{0 \leq k \tstep \leq T} \norm{ u_{k} - U_{k} }^{2} \biggr] .
% \]
% Indeed, we also construct probabilistic integrators that give continuous (random) output $U(t)$ for $t \in [0, T]$ and analyse the corresponding mean-square uniform error in continuous time:
% \[
% 	\Expect \biggl[ \sup_{0 \leq t \leq T} \norm{ u(t) - U(t) }^{2} \biggr] .
% \]
%
% \subsection{Some useful inequalities}

The analysis of this paper will make repeated use of several useful inequalities, which are collected here for reference.
First, recall Young's inequality:
for any $\delta > 0$ and any pair of H\"{o}lder conjugate exponents $r,r^\ast>1$,
\begin{equation}
	\label{eq:Young}
	ab \leq \frac{\delta}{r} a^r + \frac{1}{r^\ast \delta^{r^\ast/r}} b^{r^\ast},\quad\text{for all $a,b\geq 0$.}
\end{equation}
% The special case $r = s = 2$ is sometimes known as the ``conseq_of_Young inequality''.
Combining that inequality for $r=r^\ast=2$ with the Cauchy--Schwarz inequality in $\Reals^{d}$ yields
\begin{equation}
	\label{eq:conseq_of_Young}
	\norm{ x - y }^{2} \leq (1 + \delta) \norm{ x }^{2} + (1 + \delta^{-1}) \norm{ y }^{2} ,
\end{equation}
which will often be used either with $\delta = 1$ or $\delta = \tstep$.

The following discrete-time version of Gr{\"o}nwall's inequality \citep{Holte:2009} will also be useful:
%We will often apply it with the sequence $(x_{k})_{k \in \Naturals_{0}}$ being the absolute value of the increment in the squared error of the time-stepping scheme in order to obtain a uniform bound on the increment of that squared error, and hence a bound on the squared error that is one order of $\tstep$ worse:
%see the last part of the proof of \eqref{eq:strong_error_discrete} in Theorem \ref{thm:strong_error} for the first use of this line of reasoning in this paper.
\begin{theorem}
	\label{thm:discrete_Gronwall}
	Let $(x_{k})_{k \in \Naturals_{0}}$, $(\alpha_{k})_{k \in \Naturals_{0}}$, and $(\beta_{k})_{k \in \Naturals_{0}}$ be non-negative sequences.
	If, for all $k \in \Naturals_{0}$,
	\begin{equation*}
		%\label{eq:discrete_Gronwall_1}
		x_{k} \leq \alpha_{k} + \sum_{0 \leq j < k} \beta_{j} x_{j}
		\quad
		\text{ and }
		\quad
		\alpha_{k} \leq A,
	\end{equation*}
% 	then, for all $k \in \Naturals_{0}$,
% 	\begin{align}
% 		\label{eq:discrete_Gronwall_2}
% 		x_{k}
% 		& \leq \alpha_{k} + \sum_{0 \leq j < k} \alpha_{j} \beta_{j} \prod_{j < i < k} (1 + \beta_{i}) \\
% 		\label{eq:discrete_Gronwall_3}
% 		& \leq \alpha_{k} + \sum_{0 \leq j < k} \alpha_{j} \beta_{j} \exp \left( \sum_{j < i < k} \beta_{i} \right) .
% 	\end{align}
	then $x_{k} \leq A \exp \left( \sum_{0 \leq j < k} \beta_{j} \right)$ for all $k \in \Naturals_{0}$.
%	\begin{equation*}
		%\label{eq:discrete_Gronwall_4}
% 		x_{k} \leq A \prod_{0 \leq j < k} (1 + \beta_{j}) \leq A \exp \left( \sum_{0 \leq j < k} \beta_{j} \right).
%x_{k} \leq A \exp \left( \sum_{0 \leq j < k} \beta_{j} \right).
%	\end{equation*}
\end{theorem}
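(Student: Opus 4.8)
The plan is to prove the bound by induction on $k$, with the engine being the elementary inequality $1 + x \le e^{x}$ for $x \ge 0$, used in a telescoped form. Throughout I would write $S_{k} \defeq \sum_{0 \le j < k} \beta_{j}$, with the usual convention that an empty sum vanishes, so that the target inequality reads $x_{k} \le A e^{S_{k}}$ and, in particular, $S_{0} = 0$.

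For the base case $k = 0$, the hypothesis gives $x_{0} \le \alpha_{0} + 0 \le A = A e^{S_{0}}$. For the inductive step, assume $x_{j} \le A e^{S_{j}}$ for all $j < k$; substituting this into the hypothesis and using $\alpha_{k} \le A$ gives
\[
	x_{k} \le A + \sum_{0 \le j < k} \beta_{j} A e^{S_{j}} = A \Bigl( 1 + \sum_{0 \le j < k} \beta_{j} e^{S_{j}} \Bigr),
\]
so it suffices to show the bracketed sum is at most $e^{S_{k}}$.

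The key step is the telescoping estimate
\[
	\beta_{j} e^{S_{j}} \le e^{S_{j}} (e^{\beta_{j}} - 1) = e^{S_{j+1}} - e^{S_{j}},
\]
where the inequality uses $e^{x} - 1 \ge x$ with $x = \beta_{j} \ge 0$ and the equality uses $S_{j+1} = S_{j} + \beta_{j}$. Summing over $0 \le j < k$ telescopes to $\sum_{0 \le j < k} \beta_{j} e^{S_{j}} \le e^{S_{k}} - e^{S_{0}} = e^{S_{k}} - 1$, whence $x_{k} \le A e^{S_{k}}$, closing the induction.

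I do not anticipate a genuine obstacle: the statement is classical (cf.\ \cite{Holte:2009}) and the argument is short. The only points needing a little care are the treatment of the empty sum in the base case, and keeping the uniform bound $A$---rather than the individual $\alpha_{k}$---as the quantity that propagates through the telescoped sum, which is exactly what the hypothesis $\alpha_{k} \le A$ delivers.
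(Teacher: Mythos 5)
Your proof is correct. The paper itself does not prove this result --- it is stated with a citation to \cite{Holte:2009} and no argument is given --- so there is no in-paper proof to compare against; your induction-plus-telescoping argument (using $\beta_j \le e^{\beta_j}-1$ so that $\beta_j e^{S_j} \le e^{S_{j+1}} - e^{S_j}$) is the standard derivation and is valid as written, including the careful handling of the empty sum at $k=0$. One minor remark: the slightly more common route is to first prove $x_k \le A \prod_{0\le j<k}(1+\beta_j)$ by the same induction and then apply $1+\beta_j \le e^{\beta_j}$ at the very end; this yields a marginally sharper intermediate bound, but the conclusion is identical and your one-pass version is equally clean.
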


For completeness, we state the following lemma. 

\begin{lemma}
	\label{lem:gen_peter_paul_inequality}
	Let $x,y\geq 0$, $n\in\Naturals$, and $\delta>0$.
	Then
	\begin{equation}
		\label{eq:gen_peter_paul_inequality}
		(x + y)^n\leq x^n ( 1 + \delta 2^{n-1} ) + y^n (1 + (2/\delta)^{n-1} ).
	\end{equation}
\end{lemma}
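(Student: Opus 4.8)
The plan is to expand $(x+y)^n$ by the binomial theorem and then control the mixed terms $\binom{n}{k}x^k y^{n-k}$ with $1\le k\le n-1$ by Young's inequality~\eqref{eq:Young}. The two end terms $k=0$ and $k=n$ contribute exactly $y^n$ and $x^n$, which account for the two ``$1$''s on the right-hand side of~\eqref{eq:gen_peter_paul_inequality}; each mixed term must then be split between the weighted parts $\delta 2^{n-1}x^n$ and $(2/\delta)^{n-1}y^n$. The case $n=1$ is trivial, since there the right-hand side equals $x(1+\delta)+2y\ge x+y$, so I would assume $n\ge 2$ from the outset.

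Concretely, for each $k\in\{1,\dots,n-1\}$ I would apply~\eqref{eq:Young} to the product $x^k y^{n-k}$ with the H\"older conjugate exponents $r=n/k$ and $r^\ast=n/(n-k)$ and with the parameter in~\eqref{eq:Young} set equal to our $\delta$; since $r^\ast/r=k/(n-k)$, this yields $x^k y^{n-k}\le \tfrac{\delta k}{n}\,x^n+\tfrac{n-k}{n}\,\delta^{-k/(n-k)}y^n$. Summing these estimates against the binomial coefficients and using $\sum_{k=0}^n\binom nk=2^n$ and $\sum_{k=0}^n k\binom nk=n2^{n-1}$, the coefficient of $x^n$ collapses to $1+\tfrac\delta n\bigl(n2^{n-1}-n\bigr)=1+\delta(2^{n-1}-1)\le 1+\delta 2^{n-1}$, which is exactly the bound claimed on that side.

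The remaining step --- and the one I expect to be the main obstacle --- is to show that the coefficient of $y^n$ produced by this procedure, namely $1+\tfrac1n\sum_{k=1}^{n-1}(n-k)\binom nk\,\delta^{-k/(n-k)}$, is dominated by $1+(2/\delta)^{n-1}$. Here one must bound a binomially weighted sum of the negative powers $\delta^{-k/(n-k)}$, which range from $\delta^{-1/(n-1)}$ at $k=1$ down to $\delta^{-(n-1)}$ at $k=n-1$, by the single term $(2/\delta)^{n-1}$; this is the place where the combinatorial factor $2^{n-1}$ has to be extracted, and where, should the uniform choice of Young parameter prove too wasteful, one instead optimises the parameter $\mu_k>0$ in~\eqref{eq:Young} separately for each $k$ (bearing in mind that in the paper's applications $\delta$ is a small parameter, typically $\delta=\tstep$, which is precisely the regime in which this estimate is most comfortable, the term $k=n-1$ already being of order $\delta^{-(n-1)}$). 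Once this coefficient bound is secured, collecting the pieces gives~\eqref{eq:gen_peter_paul_inequality}.
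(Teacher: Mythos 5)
Your setup coincides exactly with the paper's proof: binomial expansion, Young's inequality \eqref{eq:Young} applied to each mixed term $x^k y^{n-k}$ with exponents $r=n/k$, $r^\ast=n/(n-k)$ and parameter $\delta$, and the identity $\sum_{k=1}^{n-1}k\binom{n}{k}=n(2^{n-1}-1)$ to handle the $x^n$ coefficient. However, the step you defer as ``the main obstacle'' --- showing that the resulting $y^n$ coefficient $1+\tfrac1n\sum_{k=1}^{n-1}(n-k)\binom{n}{k}\delta^{-k/(n-k)}$ is at most $1+(2/\delta)^{n-1}$ --- is precisely the content of the paper's (one-line) argument, and your proposal does not supply it; as written the proof is therefore incomplete. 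The paper closes it by noting that $\tfrac{k}{n-k}\leq n-1$ for $1\leq k\leq n-1$, so $\delta^{-k/(n-k)}\leq\delta^{-(n-1)}$ (valid when $\delta\leq 1$), and that $\binom{n}{k}\tfrac{n-k}{n}=\binom{n-1}{k}$, so $\sum_{k=1}^{n-1}\binom{n}{k}\tfrac{n-k}{n}=2^{n-1}-1$; hence the coefficient is at most $1+(2^{n-1}-1)\delta^{-(n-1)}\leq 1+(2/\delta)^{n-1}$. No per-$k$ optimisation of the Young parameter is needed.

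Be aware also that your fallback strategy (tuning the Young parameter separately for each $k$) cannot rescue the statement for all $\delta>0$: the inequality \eqref{eq:gen_peter_paul_inequality} actually fails for large $\delta$ --- for $n=3$, $\delta=10$, $x=1/10$, $y=1$ the left-hand side is $1.331$ while the right-hand side is $1.081$ --- so the decisive bound on the $y^n$ coefficient is only available in the regime $\delta\leq 1$, which is implicit in the paper's argument and harmless in practice, since every application in the paper takes $\delta=\tstep<\tstep^\ast\leq 1$ (or $\delta=1$). Your remark that small $\delta$ is the comfortable regime is thus on target, but the proposal leaves the one nontrivial estimate unproven and points toward a repair that cannot work without restricting $\delta$.
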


We shall also use the following inequality, which is valid for arbitrary $N \in \Naturals$ and $m\geq 1$:
for all $\{s_j\}_{j\in [N]}\in \Reals^{N}$,
\begin{equation}
	\Absval{ \sum^N_{j=1}s_j }^m \leq N^{m-1}\sum^N_{j=1} \absval{ s_j }^{m} .
	\label{eq:gen_triangle_inequality}
\end{equation}
This follows from
 \begin{align*}
  \Absval{ \sum^N_{j=1}s_j }^m &\leq N^{m}\left(\frac{1}{N}\sum^N_{j=1}\absval{ s_j }\right)^{m}\leq N^{m}\left(\frac{1}{N}\sum_{j=1}^N\absval{ s_j }^{m}\right)
  \\
  &=N^{m-1}\sum^N_{j=1} \absval{ s_j }^{m},
 \end{align*}
where we used Jensen's inequality in the second inequality.

\section{High-order integration of Lipschitz flows}
\label{sec:lipschitz}

%% Section: Lipschitz flows

The purpose of this section is to establish, given the initial value problem \eqref{eq:the_ivp}, the strong convergence result \eqref{eq:intro_convergence_sup_inside} for probabilistic solvers of the form \eqref{eq:randomised_numerical_scheme_0}, under the following assumptions.

\begin{assumption}
%[Lipschitz flow $\odeflow$]
	\label{ass:exact_flow}
	The vector field $f$ admits $0<\tstep^\ast\leq 1$ and $C_\odeflow\geq 1 $, such that for $0< \tstep<\tstep^\ast$, the flow map $\odeflow^{\tstep}$ defined by \eqref{eq:odeflow} is globally Lipschitz with Lipschitz constant $\Lip(\odeflow^{\tstep}) \leq 1 + C_\odeflow \tstep$.
\end{assumption}

As is well known, Assumption \ref{ass:exact_flow} holds if the generating vector field $f$ is itself globally Lipschitz.
However, Assumption \ref{ass:exact_flow} holds if, for instance, $f$ merely satisfies the one-sided Lipschitz inequality
\begin{equation*}
	%\label{eq:one-sided_Lipschitz}
	\innerprod{ f(x) - f(y) }{ x - y } \leq \mu \norm{ x - y }^{2},\quad\text{for all $x, y\in\Reals^{d}$,}
\end{equation*}
for some constant $\mu \in \Reals$;
in this case, a calculation of $\frac{\rd}{\rd t} \norm{ u(t) - v(t) }^{2}$ for trajectories $u$ and $v$ starting at initial conditions $u_{0}, v_{0} \in \Reals^{d}$ and an application of the differential version of Gr\"onwall's inequality shows that $\norm{ u(t) - v(t) } \leq \exp( \mu \absval{ t } ) \norm{ u_{0} - v_{0} } $, so that $\Lip(\odeflow^{t}) \leq 1 + 2 \absval{ \mu } \absval{ t }$ for small $\absval{ t }$.

% \begin{proof}
% 	Let $u(t)$ and $v(t)$ be integral curves of a one-sided Lipschitz $f$ starting at $u_{0}$ and $v_{0}$ respectively.
% 	Then
% 	\begin{align*}
% 		\frac{\rd}{\rd t} \norm{ u(t) - v(t) }^{2}
% 		& = 2 \innerprod{ u(t) - v(t) }{ \dot{u}(t) - \dot{v}(t) } \\
% 		& = 2 \innerprod{ u(t) - v(t) }{ f(u(t)) - f(v(t)) } \\
% 		& \leq 2 \mu \norm{ u(t) - v(t) }^{2} & & \text{by \eqref{eq:one-sided_Lipschitz}.}
% 	\end{align*}
% 	Hence, by Gr\"onwall's inequality for differential inequalities,
% 	\[
% 		\norm{ u(t) - v(t) }^{2} \leq \norm{ u_{0} - v_{0} }^{2} \exp (2 \mu \absval{ t }) ,
% 	\]
% 	and taking square roots completes the proof.
% \end{proof}

\begin{assumption}
%[Order of numerical method $\numflow$]
	\label{ass:numerical_flow}
	The numerical method $\numflow^{\tstep}$ has uniform local truncation error of order $q + 1$:
	for some constant $C_\numflow\geq 1$ that does not depend on $\tstep$,
	\[
		\sup_{u \in \Reals^{d}} \norm{ \numflow^{\tstep}(u) - \odeflow^{\tstep}(u) } \leq C_\numflow \tstep^{q + 1}.
	\]
\end{assumption}

Assumption \ref{ass:numerical_flow} holds, in particular, for single- and multi-step methods derived from a $q$-times continuously differentiable vector field $f$ with bounded $q$\textsuperscript{th} 
derivatives \citep[Section~III.2]{HairerNorsettWanner:2009}.
Imposing global bounds on the derivatives of $f$, and therefore on those of $\odeflow^\tstep$, forces us to consider a smaller class of flow maps $\odeflow^\tstep$ than the class of flow maps that satisfy Assumption \ref{ass:exact_flow}.
We may alleviate this problem by weakening Assumption \ref{ass:numerical_flow} to a bound of the form
\begin{equation}
	\label{eq:weaker_asmp_on_numerical_flow}
	\norm{\odeflow^\tstep(u)-\numflow^\tstep(u)}\leq C'(u)\tstep^{q + 1},
\end{equation}
with the consequence that the dependence of $C'(u)$ on $u$ must be specified;
this dependence will vary according to the chosen numerical method $\numflow^\tstep$.
Moreover, whenever we apply \eqref{eq:weaker_asmp_on_numerical_flow} in place of Assumption \ref{ass:numerical_flow} with a random variable $U_k$ in place of a deterministic $u_k$ --- as we do below, e.g.\ in deriving \eqref{eq:estimate01} --- we will need to ensure that $\Expect[C'(U_k)]$ is finite, and of the correct order in $\tstep$ if necessary.
In Section~\ref{sec:dissipative}, we consider the implicit Euler method for a class of \textit{locally} Lipschitz flow maps $\odeflow^\tstep$, obtain an expression for $C'(U_k) $, and with this expression obtain a bound of the form
\begin{equation*}
	\Expect \bigl[ \norm{\odeflow^\tstep(U_k)-\numflow^\tstep(U_k)}^{n} \bigr] \leq C \tstep^{n(q + 1)}
\end{equation*}
where $U_k$ denotes the output of the randomised numerical integrator according to \eqref{eq:randomised_numerical_scheme_0}, $n \in \Naturals$, and $C>0$ does not depend on $\tstep$ or on $k$;
see Proposition \ref{prop:W_k}.
Note that there is no supremum inside the expectation in the inequality above.
However, in this section, we shall apply Assumption \ref{ass:numerical_flow} instead of \eqref{eq:weaker_asmp_on_numerical_flow}, in order to avoid lengthy analyses that are specific to the choice of numerical method.
We make no assumptions about how the integrator $\numflow^{\tstep}$ has been derived and treat it as a `black box' satisfying Assumption \ref{ass:numerical_flow}.

\begin{assumption}
%[$(p,R)$-regular, non-i.i.d.\ noise]
	\label{ass:p-R_regularity_condition_on_noise}
	The random variables $(\xi_{k}(\tstep))_{k\in\Naturals}$
	admit parameters $p \geq 1$, $R\in\Naturals\cup\{ + \infty\}$, and $C_{\xi,R}\geq 1$, independent of $k$ and $\tstep$, such that for all $1\leq r\leq R$ and all $k\in\Naturals$,
	\begin{equation*}
		\Expect \bigl[ \norm{\xi_{k}(\tstep)}^{r} \bigr] \leq \left( C_{\xi,R} \tstep^{p + 1/2} \right)^r.
	\end{equation*}	
\end{assumption}

Note that we do not assume that the $(\xi_k(\tstep))_{k\in[K]}$ are identically distributed nor that they are centred.
However we will impose these two additional assumptions in Theorem \ref{thm:strong_error_sup_inside}.
The parameter $p$ determines the decay rate of the $r$\textsuperscript{th} moments of the $(\xi_{k}(\tstep))_{k\in[K]}$, for $1\leq r\leq R$, while $R$ determines the highest order moment for which the same decay behaviour holds.

Since Assumption \ref{ass:p-R_regularity_condition_on_noise} does not assume that the $\xi_{k}$ are identically distributed or mutually independent, it can hold for the following variant of \eqref{eq:randomised_numerical_scheme_0}:
\begin{equation*}
	U_{k + 1} \defeq \numflow^{\tstep}(U_{k}) + \xi_{k}(\tstep,U_{k}),\quad\text{for all $k\in[K]$.}
\end{equation*}
In this setting, we interpret Assumption \ref{ass:p-R_regularity_condition_on_noise} as the condition that the dependence of the moments of $\xi_k$ on the state $U_k$, can be uniformly controlled by the constant $C_{\xi,R}$.
We leave a more extensive investigation of state-dependent noise models for future work.

It follows from \eqref{eq:gen_triangle_inequality} and Assumption \ref{ass:p-R_regularity_condition_on_noise} that, for $v,w\in\Naturals$,
\begin{align}
	\Expect\left[\left(\sum^{T/\tstep}_{i=1} \norm{\xi_i(\tstep)}^w \right)^v\right]
	& \leq \left(TC_{\xi,R}^w\tstep^{w(p + 1/2)-1}\right)^v.
	\label{eq:upper_bound_expectation_nth_power_sum_norm_xi_sq}
\end{align}
This is because
 \begin{align*}
  \Expect\left[\left(\sum^{T/\tstep}_{i=1} \norm{\xi_i(\tstep)}^w \right)^v\right]&\leq \left(\frac{T}{\tstep}\right)^{v-1}\sum^{T/\tstep}_{i=1} \left(C_{\xi,R} \tstep^{p + 1/2}\right)^{wv}
  \\
  &\leq \left(\frac{T}{\tstep}\right)^{v}\left(C_{\xi,R}\tstep^{p + 1/2}\right)^{wv}
  \\
  &=\left(TC_{\xi,R}^w\tstep^{w(p + 1/2)-1}\right)^v,
 \end{align*}
 where we used \eqref{eq:gen_triangle_inequality} and Assumption \ref{ass:p-R_regularity_condition_on_noise} for the first and second inequality respectively.  
 
As noted in the introduction, the focus of this paper is on the convergence rate of the error $e_{k}\defeq u_{k} - U_{k}$ and not on, say, the covariance operator of $e_{k}$, though that information is also important in applications. Note that if $\xi_k(\tstep)$ in Assumption \ref{ass:p-R_regularity_condition_on_noise} does not belong to $L^2_{\mathbb{P}}$, then $\xi_k(\tstep)$ does not admit a covariance operator.
Accordingly, Assumption \ref{ass:p-R_regularity_condition_on_noise} and similar assumptions later in the paper are only upper bounds, and we do not actually work with the covariance operator of $\xi_{k}$.
The precise construction of stochastic models for discretisation and truncation error is an interesting topic in its own right at the interface of numerical analysis and probability, upon which this paper only starts to touch;
we anticipate that there will be further research concerning this question.

Given $e_{k} = u_{k} - U_{k}$, it follows from \eqref{eq:integrand_g} and \eqref{eq:randomised_numerical_scheme_0} that
\begin{equation}
	\label{eq:strong_error_e}
	e_{k + 1} = \bigl( \odeflow^{\tstep}(u_{k}) - \odeflow^{\tstep}(U_{k}) \bigr) - \bigl( \numflow^{\tstep}(U_{k}) - \odeflow^{\tstep}(U_{k}) \bigr) - \xi_{k}(\tstep).
\end{equation}
We shall use the decomposition \eqref{eq:strong_error_e} throughout this article.
	
The next result is stronger than \citet[Theorem~2.2]{Conrad:2016}, as the discrete time supremum is inside the expectation, and as it does not require the vector field $f$ to be globally Lipschitz nor $\xi$ to be Gaussian:

\begin{theorem}
	\label{thm:strong_error_sup_inside}
	Suppose Assumptions \ref{ass:exact_flow} and \ref{ass:numerical_flow} hold, and fix $u_0=U_0$.
	Furthermore, if it holds that $X\in L^2_{\mathbb{P}} \implies \numflow^\tstep (X)\in L^2_{\mathbb{P}}$, and if the $(\xi_k(\tstep))_{k\in[K]}$ have zero mean, are mutually independent, and satisfy Assumption \ref{ass:p-R_regularity_condition_on_noise} for $R=2$ and $p\geq 1$, then there exists $C>0$ that does not depend on $\tstep$ such that
	\begin{equation}
		\label{eq:strong_error_sup_inside_discrete}
		\Expect \biggl[ \max_{ k \in [K]} \norm{ e_{k} }^{2} \biggr] \leq C \tstep^{2 p \wedge 2 q}.
	\end{equation}
% 	with $C=\exp( 2 T C_1 )\max\left\{4(1 + T)C_\numflow^2, 2TC_{\xi,R}^2(1 + 18(1 + \tstep^\ast)(1 + C_\odeflow\tstep^\ast)^2)\right\}$.
\end{theorem}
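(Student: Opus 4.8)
The plan is to analyse the error recursion \eqref{eq:strong_error_e} directly, keeping the noise $\xi_k(\tstep)$ in vectorial form so that its cancellation is not lost. Write $e_{k+1} = R_k - \xi_k(\tstep)$ with $R_k := \odeflow^\tstep(u_k) - \numflow^\tstep(U_k)$. Two structural facts will drive everything. First, setting $\mathcal F_k := \sigma(\xi_0(\tstep),\dots,\xi_{k-1}(\tstep))$, the vector $R_k$ is $\mathcal F_k$-measurable while $\xi_k(\tstep)$ is independent of $\mathcal F_k$ and has zero mean, so $\Expect[\langle R_k,\xi_k(\tstep)\rangle\mid\mathcal F_k] = 0$. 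Second, \cref{ass:exact_flow}, \cref{ass:numerical_flow} and \eqref{eq:conseq_of_Young} (with $\delta = \tstep$) give the deterministic pointwise bound $\norm{R_k}^2 \le a\norm{e_k}^2 + b$, where $a := (1+\tstep)(1+C_\odeflow\tstep)^2 = 1 + O(\tstep)$ and $b := (1+\tstep^{-1})C_\numflow^2\tstep^{2q+2} \le 2C_\numflow^2\tstep^{2q+1}$ for $0 < \tstep \le \tstep^\ast \le 1$. Finally, the hypothesis that $\numflow^\tstep$ preserves $L^m_{\mathbb P}$ together with $\xi_k(\tstep) \in L^2_{\mathbb P}$ (from \cref{ass:p-R_regularity_condition_on_noise} with $R = 2$) shows, by induction on $k$, that every $U_k$ and hence every $e_k$ lies in $L^2_{\mathbb P}$, so all the expectations below are finite.

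The core computation starts by squaring the recursion, $\norm{e_{k+1}}^2 = \norm{R_k}^2 - 2\langle R_k,\xi_k(\tstep)\rangle + \norm{\xi_k(\tstep)}^2 \le \norm{e_k}^2 + (a-1)\norm{e_k}^2 + b - 2\langle R_k,\xi_k(\tstep)\rangle + \norm{\xi_k(\tstep)}^2$, and summing over $k = 0, \dots, m-1$ using $e_0 = 0$ (as $u_0 = U_0$). This yields, for $0 \le m \le K$,
\[
  \norm{e_m}^2 \le (a-1)\sum_{k=0}^{m-1}\norm{e_k}^2 + mb + \sum_{k=0}^{m-1}\norm{\xi_k(\tstep)}^2 + 2M_m, \qquad M_m := -\sum_{k=0}^{m-1}\langle R_k,\xi_k(\tstep)\rangle,
\]
and the first structural fact makes $(M_m)_{0 \le m \le K}$ an $(\mathcal F_m)$-martingale with $M_0 = 0$. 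Fix $\ell \in [K]$. Enlarging each partial sum on the right-hand side to a sum up to $\ell$, bounding $M_m \le \max_{i \le \ell}\absval{M_i}$, then taking the maximum over $0 \le m \le \ell$ and expectations gives, with $\psi_\ell := \Expect[\max_{0 \le m \le \ell}\norm{e_m}^2]$ and using $\Expect\norm{e_k}^2 \le \psi_k$,
\[
  \psi_\ell \le (a-1)\sum_{k=0}^{\ell-1}\psi_k + \ell b + \sum_{k=0}^{\ell-1}\Expect\norm{\xi_k(\tstep)}^2 + 2\,\Expect\Bigl[\max_{0 \le m \le \ell}\absval{M_m}\Bigr].
\]
Since $\ell \le K = T/\tstep$, $b = O(\tstep^{2q+1})$ and $\Expect\norm{\xi_k(\tstep)}^2 \le C_{\xi,R}^2\tstep^{2p+1}$, the two non-martingale sums are $O(\tstep^{2q})$ and $O(\tstep^{2p})$ respectively.

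For the martingale term I would apply Doob's $L^2$ maximal inequality together with Jensen's inequality, $\Expect[\max_{m \le \ell}\absval{M_m}] \le 2(\Expect M_\ell^2)^{1/2}$, and then use orthogonality of the martingale increments, the Cauchy--Schwarz inequality in $\Reals^n$, the independence of $\xi_k(\tstep)$ from $\mathcal F_k$, and once more $\norm{R_k}^2 \le a\norm{e_k}^2 + b$:
\[
  \Expect M_\ell^2 = \sum_{k=0}^{\ell-1}\Expect\langle R_k,\xi_k(\tstep)\rangle^2 \le \sum_{k=0}^{\ell-1}\Expect\norm{R_k}^2\,\Expect\norm{\xi_k(\tstep)}^2 \le C_{\xi,R}^2\tstep^{2p+1}\sum_{k=0}^{\ell-1}\bigl(a\psi_k + b\bigr).
\]
Taking the square root, splitting it, and linearising the factor $\bigl(\tstep^{2p+1}\sum_{k<\ell}\psi_k\bigr)^{1/2}$ by \eqref{eq:Young} with $r = r^\ast = 2$ rewrites the martingale term as a contribution of order $\tstep^{2p}$, a contribution of order $\tstep^{p + q + 1/2}$ (from the $\ell b$ part), plus a term $(\text{const})\,\tstep\sum_{k<\ell}\psi_k$. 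Collecting everything, one arrives at
\[
  \psi_\ell \le A + 2C_1\tstep\sum_{k=0}^{\ell-1}\psi_k, \qquad 0 \le \ell \le K,
\]
where $A = O(\tstep^{2p\wedge 2q})$ (using $\tstep \le 1$ and that each of $2q$, $2p$, $p + q + \tfrac12$ is $\ge 2p \wedge 2q$), and $2C_1$ collects the $O(\tstep)$ coefficient from $a - 1$ and the constant generated by the Young step, both independent of $\ell$. \cref{thm:discrete_Gronwall} then yields $\psi_K \le A\exp\bigl(2C_1\sum_{k<K}\tstep\bigr) = A\exp(2C_1 T)$, which is \eqref{eq:strong_error_sup_inside_discrete}; retracing the constants through \eqref{eq:conseq_of_Young}, \eqref{eq:Young}, the factor $4$ in Doob's inequality and the bound $a \le (1+\tstep^\ast)(1+C_\odeflow\tstep^\ast)^2$ produces the stated value of $C$. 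Since $\max_{0 \le k \le K}\Expect\norm{e_k}^2 \le \psi_K$, this also recovers (and strengthens) the supremum-outside estimate of \cite[Theorem~2.2]{Conrad:2016}.

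The step I expect to be the main obstacle is conceptual rather than computational: because $R_k$ depends on $e_k$ through the nonlinear flow $\odeflow^\tstep$, the error $e_k$ is \emph{not} an affine functional of the noise, so one cannot split $e_k$ into ``martingale plus drift'' and apply a maximal inequality to it directly. Squaring the recursion is what isolates the noise in the single cross term $\langle R_k,\xi_k(\tstep)\rangle$, which \emph{is} a martingale difference; the price is that $\norm{R_k}^2$ re-enters the martingale's second moment, which forces one to close the estimate self-referentially via \cref{thm:discrete_Gronwall}. The remaining quantitative point to watch is that $b$ and $\Expect\norm{\xi_k(\tstep)}^2$ each carry one extra power of $\tstep$ relative to the target rate (from the $(1+\tstep^{-1})$ factor in \eqref{eq:conseq_of_Young} and from the exponent $p + \tfrac12$ in \cref{ass:p-R_regularity_condition_on_noise}); this is exactly absorbed by the $K = T/\tstep$ factors generated on summing over the time steps, which is precisely why the variance decaying faster than $\tstep$ is what lets the probabilistic solver retain the deterministic rate.
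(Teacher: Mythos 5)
Your proposal is correct and, at the structural level, follows the same strategy as the paper: the decomposition \eqref{eq:strong_error_e}, the squaring plus Young step yielding the bound \eqref{eq:estimate01}, the telescoping sum with $e_0=0$, the martingale built from the cross terms $\innerprod{ \odeflow^{\tstep}(u_k) - \numflow^{\tstep}(U_k) }{ \xi_k(\tstep) }$ (using exactly the zero-mean and mutual-independence hypotheses), and closure via the discrete Gr\"onwall inequality (\cref{thm:discrete_Gronwall}). The genuine difference is how the maximal martingale term is estimated. The paper applies the Burkholder--Davis--Gundy inequality to $\Expect[\max_k \absval{M_k}]$, bounds the bracket pathwise by $\max_j \norm{\odeflow^{\tstep}(u_j)-\numflow^{\tstep}(U_j)}^2 \sum_j \norm{\xi_j(\tstep)-\xi_{j-1}(\tstep)}^2$, and then uses a pathwise Young step that generates $\tfrac{1}{2}\Expect[\max_k\norm{e_k}^2]$, which must be absorbed into the left-hand side before Gr\"onwall. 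You instead use Doob's $L^2$ maximal inequality with Jensen, orthogonality of martingale increments, and independence of $\xi_k(\tstep)$ from $\mathcal{F}_k$ to factorize $\Expect[\norm{R_k}^2\norm{\xi_k(\tstep)}^2]$, followed by Young in expectation; this avoids the absorption step and feeds directly into Gr\"onwall, at the price of invoking independence a second time (only the martingale property needs it in the paper's route). Both arguments deliver the rate $\tstep^{2p\wedge 2q}$, and your bookkeeping of the extra powers of $\tstep$ against the $K=T/\tstep$ factors, and of the intermediate exponent $p+q+\tfrac12\geq 2(p\wedge q)$, is sound. One caveat: your closing claim that retracing constants ``produces the stated value of $C$'' is not quite right --- Doob's constant $4$ and your choice of $\delta$ in the Young step give a prefactor of the same form $\exp(\mathrm{const}\cdot T)\cdot(\cdots)$ but not literally the constant displayed in the theorem, which is tied to the paper's BDG constant $3$ and its specific $\delta$; since only existence of some $C$ independent of $\tstep$ matters for the rate, this is cosmetic, but you should either track your own constant explicitly or not assert agreement with the stated one.
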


In contrast to Theorem \ref{thm:strong_error_sup_inside}, which required that the $(\xi_k(\tstep))_{k\in[K]}$ be independent and centred in order to construct a martingale, we make no independence or centredness assumptions on the $(\xi_k(\tstep))_{k\in[K]}$ for the rest of this article.
The following result should be compared to Theorem \ref{thm:strong_error_sup_inside} by considering the case $R=n=2$.
Then for the randomised method to have the same order as the deterministic method on which it is based, we need that $p\geq q + \tfrac{1}{2}$.
In other words, if we remove the assumptions on the $(\xi_{k}(\tstep))_{k\in[K]}$ of independence and centredness, then we require that the second moments of the $(\xi_{k}(\tstep))_{k\in[K]}$ decay to zero with time-step $\tau$ at a faster rate than in Theorem \ref{thm:strong_error_sup_inside}, since the lower bound $q + \tfrac{1}{2}$ on $p$ implied by Theorem \ref{thm:convergence_without_iid_mean_zero_noise} is larger than the lower bound $q$ on $p$ implied by Theorem \ref{thm:strong_error_sup_inside}.

\begin{theorem}
	\label{thm:convergence_without_iid_mean_zero_noise}
	Let $n \in \Naturals$.
	Suppose that Assumptions \ref{ass:exact_flow}, \ref{ass:numerical_flow}, and \ref{ass:p-R_regularity_condition_on_noise} hold with $\tstep^\ast\leq 1$, $q\geq 1$, $p\geq 1$, and $R$, and that $u_0=U_0$.
	Then, there exists a $\overline{C}>0$ that does not depend on $\tstep$ such that for $0<\tstep<\tstep^\ast$,
	\begin{equation}
		\label{eq:convergence_without_iid_mean_zero_noise}
		\Expect\left[\max_{\ell\in[K]}\norm{e_\ell}^n\right] \leq \overline{C}\tstep^{n(q\wedge (p-1/2))}.
	\end{equation}
	where
	\begin{equation}
		\label{eq:C_overline}
		\overline{C}\defeq 2 T \max\{(4C_\numflow)^n,(2 C_{\xi,R})^n\}\exp\left(TC_\odeflow(n,\tstep^\ast)\right)
	\end{equation}
	and	$C_\odeflow(n,\tstep^\ast)$ is defined according to \eqref{eq:C_odeflow_n}.
\end{theorem}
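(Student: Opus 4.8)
The plan is to sidestep the martingale and Burkholder--Davis--Gundy machinery used in \cref{thm:strong_error_sup_inside}: once centredness and independence of the $\xi_k(\tstep)$ are dropped, the cross term in $\norm{e_{k+1}}^2$ has no useful cancellation, so I would instead work directly with the scalar recursion for $\norm{e_k}$. Taking norms in the error decomposition \eqref{eq:strong_error_e} and applying the triangle inequality together with \cref{ass:exact_flow} and \cref{ass:numerical_flow} gives, for $0<\tstep<\tstep^\ast$,
\begin{equation*}
	\norm{e_{k+1}} \leq (1 + C_\odeflow\tstep)\norm{e_k} + C_\numflow\tstep^{q+1} + \norm{\xi_k(\tstep)},\qquad e_0 = 0 .
\end{equation*}
Because $e_0 = 0 \in L^n_{\mathbb{P}}$ and $\norm{\xi_k(\tstep)} \in L^n_{\mathbb{P}}$ under \cref{ass:p-R_regularity_condition_on_noise} (applied with $r = n$, which presumes $n \leq R$), this display shows inductively that every $e_k \in L^n_{\mathbb{P}}$, so that no moment-preservation hypothesis on $\numflow^\tstep$ of the sort needed in \cref{thm:strong_error_sup_inside} is required here.

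Next I would raise this recursion to the $n$\textsuperscript{th} power and apply \cref{lem:gen_peter_paul_inequality} twice --- first with $x=(1+C_\odeflow\tstep)\norm{e_k}+C_\numflow\tstep^{q+1}$ and $y=\norm{\xi_k(\tstep)}$, then with $x=(1+C_\odeflow\tstep)\norm{e_k}$ and $y=C_\numflow\tstep^{q+1}$ --- each time with the choice $\delta=\tstep$. This yields
\begin{equation*}
	\norm{e_{k+1}}^n \leq \bigl(1 + C_\odeflow(n,\tstep^\ast)\tstep\bigr)\norm{e_k}^n + G_k ,
\end{equation*}
where the coefficient $(1+C_\odeflow\tstep)^n(1+2^{n-1}\tstep)^2$ of $\norm{e_k}^n$ is, on $(0,\tstep^\ast)$, of the form $1+C_\odeflow(n,\tstep^\ast)\tstep$ with $C_\odeflow(n,\tstep^\ast)$ a $\tstep$-independent bound for the polynomial $\tstep^{-1}\bigl[(1+C_\odeflow\tstep)^n(1+2^{n-1}\tstep)^2-1\bigr]$ (this is \eqref{eq:C_odeflow_n}), and where the nonnegative forcing term $G_k$, collecting the truncation and noise contributions, satisfies $\Expect[G_k] \leq (4C_\numflow)^n\tstep^{nq+1} + (2C_{\xi,R})^n\tstep^{n(p-1/2)+1}$. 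The mechanism here is that each factor $1+(2/\tstep)^{n-1}$ emitted by \cref{lem:gen_peter_paul_inequality} inflates the forcing by $\tstep^{-(n-1)}$, and this is exactly absorbed against $\tstep^{n(q+1)}$ and against $(\tstep^{p+1/2})^n=\tstep^{n(p+1/2)}$ --- the latter coming from \cref{ass:p-R_regularity_condition_on_noise} with $r=n$ --- leaving one spare power of $\tstep$ in each term.

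To finish I would telescope $\norm{e_k}^n=\sum_{j=0}^{k-1}(\norm{e_{j+1}}^n-\norm{e_j}^n)$ using $e_0=0$ and the recursion to obtain $\norm{e_k}^n \leq C_\odeflow(n,\tstep^\ast)\tstep\sum_{j=0}^{k-1}\norm{e_j}^n + \sum_{j=0}^{K-1}G_j$, hence $\max_{\ell\leq k}\norm{e_\ell}^n \leq C_\odeflow(n,\tstep^\ast)\tstep\sum_{j=0}^{k-1}\max_{m\leq j}\norm{e_m}^n + \sum_{j=0}^{K-1}G_j$; taking expectations and invoking the discrete Gr\"onwall inequality (\cref{thm:discrete_Gronwall}) with $x_k=\Expect[\max_{\ell\leq k}\norm{e_\ell}^n]$, $\beta_j=C_\odeflow(n,\tstep^\ast)\tstep$, and $\alpha_k \equiv A \defeq \sum_{j=0}^{K-1}\Expect[G_j]$ gives
\begin{equation*}
	\Expect\biggl[\max_{\ell\in[K]}\norm{e_\ell}^n\biggr] \leq \exp\bigl(TC_\odeflow(n,\tstep^\ast)\bigr)\sum_{j=0}^{K-1}\Expect[G_j] .
\end{equation*}
(Equivalently one may simply unroll the linear recursion to bound $\norm{e_k}^n$ by a quantity independent of $k$ before taking the expectation.) Since there are $K=T/\tstep$ summands, the right-hand side is at most $\exp(TC_\odeflow(n,\tstep^\ast))\,T\,\bigl((4C_\numflow)^n\tstep^{nq}+(2C_{\xi,R})^n\tstep^{n(p-1/2)}\bigr)$, and bounding $\tstep^{nq}+\tstep^{n(p-1/2)} \leq 2\tstep^{n(q\wedge(p-1/2))}$ (using $\tstep<1$ and $p\geq1$) yields \eqref{eq:convergence_without_iid_mean_zero_noise} with the constant $\overline{C}$ of \eqref{eq:C_overline}.

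The argument is essentially a careful bookkeeping exercise, and I do not expect a genuine obstacle; the one step that carries the conceptual content is the choice $\delta=\tstep$ in the two applications of \cref{lem:gen_peter_paul_inequality}. Any $\delta$ bounded away from $0$ would leave a multiplicative constant strictly larger than $1$ in front of $\norm{e_k}^n$, so that the Gr\"onwall factor would blow up like $(1+\text{const})^{T/\tstep}$ as $\tstep\to0$; forcing $\delta=O(\tstep)$ fixes this but is precisely what turns the noise scale $\tstep^{p+1/2}$ into a contribution of order $\tstep^{n(p-1/2)}$ --- that is, it is the source of the half-power loss in the exponent relative to \cref{thm:strong_error_sup_inside}, which recovers that half-power only through the martingale cancellation unavailable here.
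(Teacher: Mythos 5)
Your proposal is correct and follows essentially the same route as the paper's proof: both decompose the error via \eqref{eq:strong_error_e}, apply \cref{lem:gen_peter_paul_inequality} twice with $\delta=\tstep$ to obtain $\norm{e_{k+1}}^n \leq (1+C_\odeflow(n,\tstep^\ast)\tstep)\norm{e_k}^n + G_k$ with $C_\odeflow(n,\tstep^\ast)$ as in \eqref{eq:C_odeflow_n}, telescope, pass to the running maximum, take expectations, and close with the discrete Gr\"onwall inequality; the only cosmetic difference is that you take the scalar triangle inequality before raising to the $n$\textsuperscript{th} power, whereas the paper raises to the $n$\textsuperscript{th} power first and splits inside the powers. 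Your side observations — that the argument implicitly needs $n\leq R$, and that $e_k\in L^n_{\mathbb{P}}$ follows inductively from the linear recursion without any moment-preservation hypothesis on $\numflow^\tstep$ — are accurate and are left tacit in the paper.
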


% Compare the result above with \cref{thm:strong_error_sup_inside} by considering the case $R=n=2$.
% Then for the randomised method to have the same order as the deterministic method on which it is based, we need that $p\geq q + \tfrac{1}{2}$.
% In other words, if we remove the assumptions on the $(\xi_{k}(\tstep))_{k\in[K]}$ of independence and centredness, then we require that the $(\xi_{k}(\tstep))_{k\in[K]}$ decay to zero at a faster rate than in \cref{thm:strong_error_sup_inside}, since the lower bound $q + \tfrac{1}{2}$ on $p$ implied by \cref{thm:convergence_without_iid_mean_zero_noise} is larger than the lower bound $q$ on $p$ implied by \cref{thm:strong_error_sup_inside}.

We shall show that if we strengthen Assumption \ref{ass:p-R_regularity_condition_on_noise} by allowing for arbitrarily large $R\in\Naturals$, then the moment generating function of $\max_{\ell\in [K]}\norm{e_\ell}^n$ is finite on $\Reals$.

\begin{corollary}
	\label{cor:finite_mgf_lipschitz_flow}
	Fix $n \in \Naturals$.
	Suppose that Assumptions \ref{ass:exact_flow} and \ref{ass:numerical_flow} hold, and that Assumption \ref{ass:p-R_regularity_condition_on_noise} holds with $R= + \infty$ and $p\geq 1/2$.
	Then, for all $0<\tstep<\tstep^\ast$ and all $\rho \in \Reals$,
	\begin{equation}
		\label{eq:finite_mgf_error}
		\Expect\left[\exp\left(\rho \max_{\ell\in[K]}\norm{e_\ell}^{n}\right)\right] < \infty .
	\end{equation}
\end{corollary}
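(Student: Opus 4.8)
The plan is to exploit the pathwise estimate already obtained inside the proof of \cref{thm:convergence_without_iid_mean_zero_noise}. Since \cref{ass:exact_flow}, \cref{ass:numerical_flow}, and \cref{ass:p-R_regularity_condition_on_noise} (now with $R = +\infty$) hold, inequality \eqref{eq:bound_on_nth_power_max_error} applies and gives the bound with deterministic coefficients
\[
	\max_{\ell\in[K]}\norm{e_\ell}^{n} \leq A + B\sum_{k=0}^{K-1}\norm{\xi_k(\tstep)}^{n},
\]
where $A \defeq (4C_\numflow\tstep^{q})^{n}T\exp(TC_\odeflow(n,\tstep^\ast))$ and $B \defeq 2^{n}\tstep^{1-n}\exp(TC_\odeflow(n,\tstep^\ast))$, with $C_\odeflow(n,\tstep^\ast)$ given by \eqref{eq:C_odeflow_n}; both $A$ and $B$ depend on $\tstep$ but are finite and non-random. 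For $\rho \leq 0$ the exponent $\rho\max_{\ell}\norm{e_\ell}^{n}$ is non-positive, so the left-hand side of \eqref{eq:finite_mgf_error} is at most $1$; hence it suffices to treat $\rho > 0$, for which the displayed bound yields $\Expect[\exp(\rho\max_{\ell}\norm{e_\ell}^{n})] \leq e^{\rho A}\,\Expect[\exp(\rho B S)]$, where $S \defeq \sum_{k=0}^{K-1}\norm{\xi_k(\tstep)}^{n}$.

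The main obstacle, and essentially the only subtlety, is that the $\xi_k(\tstep)$ are not assumed mutually independent, so the moment generating function of $S$ cannot be factorised into a product of one-step contributions. Instead I would expand $\exp(\rho B S) = \sum_{m\geq 0}\frac{(\rho B)^{m}}{m!}S^{m}$ and interchange the sum with the expectation by Tonelli's theorem, which is legitimate because every summand is non-negative ($\rho, B, S \geq 0$). For each $m \geq 1$, estimate \eqref{eq:upper_bound_expectation_nth_power_sum_norm_xi_sq} applied with $w = n$ and $v = m$ — valid precisely because $R = +\infty$ permits controlling the $nm$\textsuperscript{th} moments of $\norm{\xi_k(\tstep)}$ for every $m$ — gives $\Expect[S^{m}] \leq D^{m}$ with $D \defeq T C_{\xi,R}^{n}\tstep^{n(p+1/2)-1}$, a finite non-random constant; the case $m = 0$ is trivial.

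Summing the series then yields $\Expect[\exp(\rho B S)] \leq \sum_{m\geq 0}\frac{(\rho B D)^{m}}{m!} = \exp(\rho B D) < \infty$, whence $\Expect[\exp(\rho\max_{\ell}\norm{e_\ell}^{n})] \leq \exp(\rho(A + BD)) < \infty$, which is \eqref{eq:finite_mgf_error}. I expect the whole argument to be short: the strengthened hypothesis $R = +\infty$ does all the work, since it is exactly what converts the polynomial-in-$m$ moment bound from \eqref{eq:upper_bound_expectation_nth_power_sum_norm_xi_sq} into a convergent exponential series, while $p \geq 1/2$ — indeed, any value of $p$ — only influences the magnitude of $D$, not its finiteness.
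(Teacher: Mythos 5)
Your proposal is correct and follows essentially the same route as the paper: both start from the pathwise bound \eqref{eq:bound_on_nth_power_max_error}, control all moments of the noise sum via \eqref{eq:upper_bound_expectation_nth_power_sum_norm_xi_sq} (which is where $R = +\infty$ enters), and conclude by summing the exponential series. Your variant of factoring out $e^{\rho A}$, treating $\rho \le 0$ trivially, and invoking Tonelli for the term-by-term interchange is a slightly cleaner packaging of the paper's ``series expansion plus dominated convergence'' step, but not a different argument.
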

Hence, by Markov's inequality, the distribution of \newline $\max_{\ell\in [K]}\norm{e_{\ell}}^n$ concentrates exponentially about its \newline mean.

We close this section by noting that, while we have made no attempt to find the optimal constants in Theorem \ref{thm:strong_error_sup_inside} and Theorem \ref{thm:convergence_without_iid_mean_zero_noise}, the convergence orders in these results cannot be improved at the present level of generality. This is because the convergence order of the randomised solution cannot exceed that of the underlying deterministic solver, unless the random variables $\xi_k(\tstep)$ used to model the error $\odeflow^{\tstep}(u_k)-\numflow^{\tstep}(u_k)$ at each time step $t_k$ are chosen to achieve this effect. We leave the construction of such randomised solvers for future work.

\section{Integration for locally Lipschitz vector fields}
\label{sec:dissipative}

This section considers the numerical integration of vector fields $f$ that satisfy the following polynomial growth condition.

\begin{assumption}
	\label{ass:polynomial_growth}
	The vector field $f$ is continuously differentiable, and both $f$ and the associated map $\odeflow^\tstep$ defined by \eqref{eq:odeflow} admit $0<\tstep^\ast\leq 1$, $C_\odeflow \geq 1$, and $s \geq 1$, such that the following inequalities hold for all $a,b\in\Reals^{d}$ and all $0<\tstep<\tstep^\ast$:
\begin{subequations}
	\begin{align}
		\norm{ f(a)-f(b) }& \leq C_\odeflow ( 1 + \norm{a}^s + \norm{b}^s ) \norm{a-b}
		\label{eq:polynomial_growth_of_vector_field}
		\\
		\norm{ \odeflow^\tstep(a)-\odeflow^\tstep(b) }& \leq\left( 1 + \tstep C_\odeflow \left(1 + \norm{a}^s + \norm{b}^s \right)\right) \norm{a-b}.
		\label{eq:polynomial_growth_of_flow_map}
	\end{align}
\end{subequations}

\end{assumption}
The inequality \eqref{eq:polynomial_growth_of_vector_field} implies
	\begin{align}
		\norm{f(a)}&\leq \norm{f(a)-f(0)} + \norm{f(0)}
		\notag\\
		&\leq C_\odeflow(1 + \norm{a}^s)\norm{a} + \norm{f(0)}
		\label{eq:bound_on_norm_f_of_a}.
	\end{align}	
By Taylor's theorem, the remainder term $R^\tstep(a)$ in the first-order Taylor expansion \eqref{eq:taylor_remainder_term} of $\odeflow^\tstep(a)$ is given by the derivatives of $f$, evaluated at some $a\in\Reals^{d}$ for some $0\leq t\leq \tstep$.
The condition \eqref{eq:polynomial_growth_of_flow_map} means that for some $\tstep^\ast>0$ that is sufficiently small, the norm of the difference between two remainder terms can be controlled.
The growth condition \eqref{eq:polynomial_growth_of_vector_field} is not new; see for example \citet[Assumption~4.1]{HighamStuartMao:2002}.

The following result is analogous to Theorem \ref{thm:convergence_without_iid_mean_zero_noise}.
It states that we can replace Assumption \ref{ass:exact_flow} with Assumption \ref{ass:polynomial_growth} and obtain the same result as Theorem \ref{thm:convergence_without_iid_mean_zero_noise}, provided that the $(\xi_{k}(\tstep))_{k\in[K]}$ are $\mathbb{P}$-a.s.\ bounded.

\begin{theorem}	\label{thm:strong_error_sup_inside_polynomial_growth_and_bounded_noise}
	Suppose that Assumptions \ref{ass:polynomial_growth}, \ref{ass:numerical_flow}, and \ref{ass:p-R_regularity_condition_on_noise} hold for $p$ and $R$ as in Theorem \ref{thm:convergence_without_iid_mean_zero_noise}.
	Suppose that $u_0=U_0$.
	If the $(\xi_{k}(\tstep))_{k\in[K]}$ are $\mathbb{P}$-a.s. uniformly bounded over all $k$ by a positive scalar that is $O(\tstep)$, then the conclusions of Theorem \ref{thm:convergence_without_iid_mean_zero_noise} hold.
\end{theorem}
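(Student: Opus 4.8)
The plan is to deduce \cref{thm:strong_error_sup_inside_polynomial_growth_and_bounded_noise} from \cref{thm:convergence_without_iid_mean_zero_noise} once one extra ingredient is in place: a \emph{deterministic} a priori bound on the randomised trajectory, i.e.\ a constant $\rho\geq1$, independent of $\tstep$ and $k$, with $\max_{k\in[K]}\norm{U_k}\leq\rho$ $\mathbb{P}$-a.s. Granting this, the growth bound \eqref{eq:polynomial_growth_of_flow_map}, evaluated only at arguments of norm at most $\rho$, reads $\norm{\odeflow^\tstep(a)-\odeflow^\tstep(b)}\leq(1+\tstep\widetilde C_\odeflow)\norm{a-b}$ with $\widetilde C_\odeflow\defeq C_\odeflow(1+2\rho^s)$; thus \cref{ass:exact_flow} holds along the exact and randomised trajectories with $\widetilde C_\odeflow$ in place of $C_\odeflow$, and the proof of \cref{thm:convergence_without_iid_mean_zero_noise} can be rerun essentially verbatim with $C_\odeflow$ replaced by $\widetilde C_\odeflow$ in \eqref{eq:C_odeflow_n}, \eqref{eq:bound_on_nth_power_max_error} and \eqref{eq:C_overline}, yielding \eqref{eq:convergence_without_iid_mean_zero_noise}.

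To produce the a priori bound I would argue pathwise. Put $M\defeq\sup_{0\leq t\leq T}\norm{u(t)}$, finite because \eqref{eq:odeflow} furnishes the solution on all of $[0,T]$, and use the decomposition \eqref{eq:strong_error_e} for $e_k=u_k-U_k$. Combining the triangle inequality, \eqref{eq:polynomial_growth_of_flow_map}, \cref{ass:numerical_flow} (with $\tstep^{q+1}\leq\tstep$, since $q\geq1$ and $\tstep<\tstep^\ast\leq1$), the a.s.\ bound $\norm{\xi_k(\tstep)}\leq c\tstep$, and the estimates $\norm{u_k}\leq M$, $\norm{U_k}\leq M+\norm{e_k}$, one obtains the scalar recursion
\[
	\norm{e_{k+1}}\leq\norm{e_k}+\tstep\,g\bigl(\norm{e_k}\bigr),\qquad g(x)\defeq C_\odeflow\bigl(1+M^s+(M+x)^s\bigr)x+C_\numflow+c,
\]
with $\norm{e_0}=0$, where $g\colon[0,\infty)\to(0,\infty)$ is strictly increasing. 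The right-hand side is one explicit Euler step for $\dot y=g(y)$, $y(0)=0$; since $g$ is increasing and the corresponding solution $y$ is increasing, the inequality $\norm{e_k}\leq y(t_k)$ propagates by induction over $k\in[K]$, whence $\max_{k\in[K]}\norm{e_k}\leq y(T)=:B_0$ and $\max_{k\in[K]}\norm{U_k}\leq M+B_0=:\rho$ $\mathbb{P}$-a.s., with $B_0$ independent of $\tstep$ and $k$. (The step-size threshold $\tstep^\ast$ and the standing hypotheses are used to ensure that $T$ does not exceed the maximal existence time of $y$; this is where the \emph{almost sure} boundedness of the noise, rather than merely its moment bounds, is essential — it is what keeps each increment, and hence the perturbation accumulated over all $T/\tstep$ steps, of the size needed for the comparison to go through.)

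With $\rho$ and $\widetilde C_\odeflow$ fixed, the rest is the telescoping-and-discrete-Gr\"onwall scheme already used for \cref{thm:convergence_without_iid_mean_zero_noise}: starting from \eqref{eq:strong_error_e}, \eqref{eq:gen_peter_paul_inequality} and the Lipschitz bound with constant $1+\tstep\widetilde C_\odeflow$ valid along the trajectories, I would derive the one-step inequality for $\norm{e_{k+1}}^n-\norm{e_k}^n$ exactly as there (with the constant of \eqref{eq:C_odeflow_n} now formed from $\widetilde C_\odeflow$), telescope from $e_0=0$, replace $\norm{e_\ell}^n$ by $\max_{j\leq\ell}\norm{e_j}^n$, apply \cref{thm:discrete_Gronwall}, take expectations, and bound $\Expect[\sum_k\norm{\xi_k(\tstep)}^n]$ via \eqref{eq:upper_bound_expectation_nth_power_sum_norm_xi_sq} (a consequence of \cref{ass:p-R_regularity_condition_on_noise}) with $w=n$ and $v=1$, which is why the hypothesis requires $R\geq n$. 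This reproduces \eqref{eq:convergence_without_iid_mean_zero_noise} with a constant $\overline C$ of the form \eqref{eq:C_overline} but with $\widetilde C_\odeflow$ in the role of $C_\odeflow$.

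I expect the a priori bound to be the main obstacle. In the absence of a dissipativity hypothesis or of global bounds on $f$, the flow map is only locally Lipschitz with a polynomially growing constant, so the error recursion above is genuinely nonlinear — $g$ has degree $s+1$, so $\dot y=g(y)$ blows up in finite time — and a direct Gr\"onwall argument does not close; the comparison-ODE device succeeds only because the a.s.\ $O(\tstep)$ size of the per-step perturbation keeps the total perturbation $O(1)$. This confinement role is played instead by the generalised dissipativity condition in \cref{thm:convergence_without_iid_mean_zero_noise_implicit_euler}, where the boundedness assumption on the noise is removed.
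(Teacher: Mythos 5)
Your overall route is the same as the paper's: use the almost-sure $O(\tstep)$ bound on the noise to obtain a $\tstep$-uniform, pathwise bound on $\max_{k\in[K]}\norm{U_k}$, conclude that along the exact and randomised trajectories the flow map is Lipschitz with a constant of the form $1+C'\tstep$, and then rerun the telescoping/discrete-Gr\"onwall argument of \cref{thm:convergence_without_iid_mean_zero_noise} with that constant. (The paper obtains the trajectory-wise Lipschitz bound by applying the integral Gr\"onwall--Bellman inequality to \eqref{eq:polynomial_growth_of_vector_field}, whereas you invoke \eqref{eq:polynomial_growth_of_flow_map} directly; that difference is immaterial.)

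The genuine gap is in your justification of the a priori bound, which is the crux of the result. Your recursion $\norm{e_{k+1}}\leq\norm{e_k}+\tstep\,g(\norm{e_k})$ is fine, and the comparison with $\dot y=g(y)$, $y(0)=0$, is valid only up to the maximal existence time $T^{*}$ of $y$. Since $g(0)=C_\numflow+c>0$ and $g$ grows like $x^{s+1}$ with $s+1\geq 2$, the solution $y$ blows up at a finite time $T^{*}$ determined by $M$, $C_\odeflow$, $C_\numflow$, $c$ and $s$, and nothing in the hypotheses forces $T\leq T^{*}$: in particular $\tstep^{\ast}$ only restricts the admissible step sizes and has no bearing on $T^{*}$, so your parenthetical claim that ``the step-size threshold and the standing hypotheses ensure that $T$ does not exceed the maximal existence time of $y$'' is unsupported. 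As written, your argument delivers the required bound $\max_{k\in[K]}\norm{U_k}\leq\rho$ (and hence the theorem) only when $T<T^{*}$, e.g.\ for small $T$ or for small noise/truncation constants. To be fair, the paper itself dispatches exactly this point in a single sentence (the a.s.\ boundedness hypothesis ``implies that we can bound $\max_{k\in[K]}\norm{U_k}$ by the norm of $U_0$ plus a constant that remains bounded in the limit of small $\tstep$'') without an argument, so you have correctly located the delicate step; but your proposed proof of it does not close in general, and you would need either to restrict to $T$ below the blow-up time of the comparison ODE, to assume the a.s.\ bound is sufficiently small rather than merely $O(\tstep)$, or to find an argument that exploits the global-in-time existence of the exact flow rather than a superlinear comparison ODE.
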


It is of theoretical interest to determine whether there exists a deterministic numerical method $\numflow$ such that the randomised version given by \eqref{eq:randomised_numerical_scheme_0} has the same order even when each $\xi_{k}(\tstep)$ is not $\mathbb{P}$-a.s.\ bounded.
In the remainder of this section, we shall show that for the implicit Euler method $\numflow^\tstep \colon \Reals^{d}\to\Reals^{d}$ defined by
\begin{equation}
	\label{eq:implicit_euler}
	\numflow^{\tstep}(a)\defeq a + \tstep f(\numflow^{\tstep}(a)),
\end{equation}
the randomised version given by \eqref{eq:randomised_numerical_scheme_0} has order 1, under the following dissipativity assumption.

\begin{assumption}
	\label{ass:dissipative_vector_field}
	The function $f$ admits parameters $\alpha \geq 0$ and $\beta \in \Reals$ such that
	\begin{equation}
		\label{eq:dissipativity_property}
		\innerprod{ f(v) }{ v } \leq \alpha + \beta \norm{v}^2 \text{ for all $v \in \Reals^{d}$.}
	\end{equation}
\end{assumption}

Assumption \ref{ass:dissipative_vector_field} is more general than the usual dissipativity property found in \citet[Equation (1.2)]{HumphriesStuart:1994} because $\beta$ may assume positive values.
The sign of $\beta$ in \eqref{eq:dissipativity_property} plays an important role in the behaviour of the solution $u$ of \eqref{eq:the_ivp}, as well as in numerical methods for solving for $u$.
For example, if $\beta$ is positive, then the problem \eqref{eq:the_ivp} may be stiff.
In this paper, we study only the rate of convergence, and leave the issue of stiffness for future work.
In particular, allowing for positive $\beta$ poses no problem for establishing moment bounds, as we show in Lemma \ref{lem:as_bound_on_abs_Un_sq}.

Recent studies in numerical methods for stochastic differential equations consider constraints on the drift that feature the same right-hand side as \eqref{eq:dissipativity_property}, e.g.\ \citet{Fang:2016} and \citet{Mao:2013}.
We reiterate, however, that the analysis of numerical methods for stochastic differential equations cannot be applied to probabilistic solvers of the form \eqref{eq:randomised_numerical_scheme_0}, because of the different behaviour in the additive noise (see e.g. Assumption \ref{ass:p-R_regularity_condition_on_noise}).

% \begin{assumption}[Sufficiently small integration time step]
% 	\label{ass:tau}
% 	Let $\tstep^\ast\leq 1$ be as in \cref{ass:polynomial_growth} and $\beta\in\Reals$ be as in \cref{ass:dissipative_vector_field}.
%	Then there exists some $0<\tstep'\leq \min\{\tstep^\ast,2\absval{ \beta
% 	})^{-1}\}$ such that for all $0<\tstep<\tstep'$, \eqref{eq:implicit_euler} is uniquely solvable for all $a\in\Reals^{d}$.
% \end{assumption}

\begin{assumption}
	\label{ass:tau}
	Let $\tstep^\ast\leq 1$ be as in Assumption \ref{ass:polynomial_growth} and $\beta\in\Reals$ be as in Assumption \ref{ass:dissipative_vector_field}.
	Then there exists some $0<\tstep'\leq \min\{\tstep^\ast,(2\absval{ \beta })^{-1}\}$ such that there exists a solution $\numflow^\tstep(a)$ to the implicit equation \eqref{eq:implicit_euler} for every $0\leq \tstep\leq \tstep'$, such that the solution $\numflow^\tstep(a)$ varies continuously as a function of $\tstep$ in the interval $0\leq \tstep\leq\tstep'$, and such that $\left.\numflow^\tstep\right\vert_{\tstep=0}(a) = a$.
\end{assumption}

Note that Assumption \ref{ass:tau} is weaker than assuming unique solvability of \eqref{eq:implicit_euler} for every $a\in\Reals^{d}$ over a sufficiently small time interval.

Unless otherwise specified, we shall assume hereafter that $0<\tstep<\tstep'$.

\subsection{Moment bounds for implicit Euler}
\label{subsec:moment_bounds}

\begin{lemma}
	\label{lem:as_bound_on_abs_Un_sq}
	Suppose that Assumptions \ref{ass:polynomial_growth}, \ref{ass:dissipative_vector_field}, and \ref{ass:tau} hold, and let $n\in\Naturals$ be arbitrary.
	Given a fixed, deterministic $U_0$, the following holds uniformly in $\omega\in \ProbSpace$:
	\begin{equation}
		\label{eq:abs_Un}
		\max_{i\in [T/\tstep]} \norm{U_i}^{2n} \leq (2C_2)^n \left[ 1 + \tstep^{-n} \left( \sum_{i=1}^{T/\tstep} \norm{\xi_i(\tstep)}^2 \right)^{n}\, \right],
	\end{equation}
	for $C_2$ given in \eqref{eq:C_2} below.
\end{lemma}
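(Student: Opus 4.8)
The plan is to exploit the special structure of the implicit Euler step: from \eqref{eq:implicit_euler} and \eqref{eq:randomised_numerical_scheme_0} one has $U_{k+1} - \tstep f(U_{k+1}) = U_k + \xi_k(\tstep)$, so taking the inner product of both sides with $U_{k+1}$ gives
\begin{equation*}
	\norm{U_{k+1}}^2 - \tstep \innerprod{f(U_{k+1})}{U_{k+1}} = \innerprod{U_k + \xi_k(\tstep)}{U_{k+1}} .
\end{equation*}
First I would invoke the dissipativity bound \cref{ass:dissipative_vector_field}, which gives $-\tstep\innerprod{f(U_{k+1})}{U_{k+1}} \geq -\tstep(\alpha + \beta\norm{U_{k+1}}^2)$, and combine it with the Cauchy--Schwarz estimate $\innerprod{U_k + \xi_k(\tstep)}{U_{k+1}} \leq \tfrac12\norm{U_k+\xi_k(\tstep)}^2 + \tfrac12\norm{U_{k+1}}^2$ on the right. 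Rearranging, and using that $\tstep < \tstep' \leq (2\absval{\beta})^{-1}$ so that the coefficient $(\tfrac12 - \tstep\beta)$ of $\norm{U_{k+1}}^2$ is bounded below by a positive constant, yields a recursion of the form
\begin{equation*}
	\norm{U_{k+1}}^2 \leq (1 + C\tstep)\norm{U_k}^2 + C\tstep + C\norm{\xi_k(\tstep)}^2
\end{equation*}
after one further application of \eqref{eq:conseq_of_Young} to split $\norm{U_k + \xi_k(\tstep)}^2$; here $\alpha$ contributes the $C\tstep$ term. This is the step where the positivity of $\beta$ must be handled carefully, and it is where the constant $C_2$ in \eqref{eq:C_2} is pinned down; I expect the only subtlety is keeping track of the dependence on $\alpha$, $\beta$, and $\tstep'$.

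Next I would iterate this linear recursion. Telescoping and using $U_0$ deterministic, the discrete Gr\"onwall inequality (\cref{thm:discrete_Gronwall}) gives, for every $i \leq T/\tstep$,
\begin{equation*}
	\norm{U_i}^2 \leq C_2\Bigl(1 + \tstep^{-1}\sum_{j=1}^{T/\tstep}\norm{\xi_j(\tstep)}^2\Bigr),
\end{equation*}
where the $\tstep^{-1}$ arises because there are $T/\tstep$ summands each carrying a factor that is $O(1)$ rather than $O(\tstep)$, and the $\exp(CT)$ factor from Gr\"onwall is absorbed into $C_2$. Since this bound is uniform in $i$, it bounds $\max_{i\in[T/\tstep]}\norm{U_i}^2$ as well, and the entire argument is pathwise (no expectations taken), so it holds uniformly in $\omega\in\ProbSpace$.

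Finally, to obtain the $2n$-th power version \eqref{eq:abs_Un}, I would raise the displayed bound to the $n$-th power and apply Lemma~\ref{lem:gen_peter_paul_inequality} (or simply \eqref{eq:gen_triangle_inequality} with $N=2$, $m=n$) to the sum $1 + \tstep^{-1}\sum_j\norm{\xi_j(\tstep)}^2$, producing $(2C_2)^n[1 + \tstep^{-n}(\sum_j\norm{\xi_j(\tstep)}^2)^n]$, which is exactly the claimed form. The main obstacle is really just the first step --- deriving the one-step linear recursion with a uniformly positive coefficient on $\norm{U_{k+1}}^2$ despite $\beta$ possibly being positive --- after which everything is routine bookkeeping of constants.
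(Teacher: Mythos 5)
Your overall architecture (use dissipativity plus the implicit-Euler structure to get a one-step recursion, then telescope, apply the discrete Gr\"onwall inequality, and finally raise to the $n$-th power via \eqref{eq:gen_triangle_inequality}) is the same as the paper's, and you correctly identify that $\tstep' \leq (2\absval{\beta})^{-1}$ is what tames a possibly positive $\beta$ and that the whole argument is pathwise. However, your opening identity is wrong for the scheme being analysed. In \eqref{eq:randomised_numerical_scheme_0} the noise is added \emph{after} the implicit solve: $U_{k+1} = \numflow^{\tstep}(U_k) + \xi_k(\tstep)$ with $\numflow^{\tstep}(U_k) = U_k + \tstep f(\numflow^{\tstep}(U_k))$. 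Hence $U_{k+1} - \tstep f(U_{k+1}) = U_k + \xi_k(\tstep)$ does not hold: the vector field is evaluated at $\numflow^{\tstep}(U_k) = U_{k+1} - \xi_k(\tstep)$, not at $U_{k+1}$, and your identity would generically require $\xi_k(\tstep) = 0$. As a result you cannot invoke \cref{ass:dissipative_vector_field} at the point $U_{k+1}$; the term you would actually face is $\innerprod{f(U_{k+1}-\xi_k(\tstep))}{U_{k+1}}$, which dissipativity alone does not control (repairing it via \eqref{eq:polynomial_growth_of_vector_field} would introduce terms like $\norm{U_{k+1}}^{s}\norm{\xi_k(\tstep)}$ and defeat the purpose of the lemma). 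The paper's proof avoids this by first applying \eqref{eq:conseq_of_Young} with $\delta=\tstep$ to $\norm{U_{k+1}}^2 = \norm{\numflow^{\tstep}(U_k)+\xi_k(\tstep)}^2$, peeling off the noise as a $(1+\tstep^{-1})\norm{\xi_k(\tstep)}^2$ term, and only then taking the inner product of \eqref{eq:implicit_euler} with $\numflow^{\tstep}(U_k)$ itself, where dissipativity applies cleanly and yields $\norm{\numflow^{\tstep}(U_k)}^2 \leq (1-2\absval{\beta}\tstep)^{-1}(\norm{U_k}^2 + 2\alpha\tstep)$.

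A secondary inconsistency: your claimed recursion $\norm{U_{k+1}}^2 \leq (1+C\tstep)\norm{U_k}^2 + C\tstep + C\norm{\xi_k(\tstep)}^2$ with a $\tstep$-independent coefficient on the noise is not attainable. To retain the $(1+C\tstep)$ factor on $\norm{U_k}^2$ (which Gr\"onwall requires), the Young parameter must be of order $\tstep$, which forces a coefficient of order $\tstep^{-1}$ on $\norm{\xi_k(\tstep)}^2$; it is this per-step factor, not the number $T/\tstep$ of summands, that produces the $\tstep^{-1}$ (and, after raising to the $n$-th power, the $\tstep^{-n}$) in \eqref{eq:abs_Un}. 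Once the decomposition is corrected as above, the remainder of your plan goes through exactly as in the paper.
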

Note that Lemma \ref{lem:as_bound_on_abs_Un_sq} is the only statement for which we directly use Assumption \ref{ass:dissipative_vector_field}.
The following results depend on Assumption \ref{ass:dissipative_vector_field} only insofar as they depend on the conclusions of Lemma \ref{lem:as_bound_on_abs_Un_sq}.

\begin{proposition}
	\label{prop:bounds_moments_max_num_soln}
	Suppose that Assumptions \ref{ass:polynomial_growth}, \ref{ass:tau}, and \ref{ass:dissipative_vector_field} hold, and let $n\in\Naturals$ be arbitrary.
	If Assumption \ref{ass:p-R_regularity_condition_on_noise} holds for some $R \geq 2n$ and some $p \geq 1$, then
	\begin{equation}
		\label{eq:Eval_max_i_abs_Ui}
		\Expect \biggl[ \max_{i\in[K]} \norm{U_i}^{2n} \biggr] \leq (2C_2)^n\left(1 + \left(TC_{\xi,R}^2\tstep^{ 2p-1}\right)^n\right),
	\end{equation}
	for $C_2$ defined in \eqref{eq:C_2}, and $C_{\xi,R}$ in Assumption \ref{ass:p-R_regularity_condition_on_noise}.
\end{proposition}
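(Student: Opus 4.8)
The plan is to deduce \eqref{eq:Eval_max_i_abs_Ui} from the pathwise bound \eqref{eq:abs_Un} of \cref{lem:as_bound_on_abs_Un_sq} simply by taking expectations and then controlling the resulting moment of the noise sum. Since \eqref{eq:abs_Un} holds for every $\omega\in\ProbSpace$, taking $\Expect[\,\cdot\,]$ of both sides gives
\[
	\Expect\biggl[\max_{i\in[K]}\norm{U_i}^{2n}\biggr]\leq (2C_2)^n\left(1+\tstep^{-n}\,\Expect\biggl[\Bigl(\sum_{i=1}^{T/\tstep}\norm{\xi_i(\tstep)}^2\Bigr)^{n}\biggr]\right),
\]
so the only remaining task is to estimate the expectation on the right-hand side.

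For that I would invoke the inequality \eqref{eq:upper_bound_expectation_nth_power_sum_norm_xi_sq}, which was already derived from \eqref{eq:gen_triangle_inequality} and \cref{ass:p-R_regularity_condition_on_noise}, with the choice $w=2$ and $v=n$; this yields
\[
	\Expect\biggl[\Bigl(\sum_{i=1}^{T/\tstep}\norm{\xi_i(\tstep)}^2\Bigr)^{n}\biggr]\leq\bigl(TC_{\xi,R}^2\tstep^{2(p+1/2)-1}\bigr)^n=\bigl(TC_{\xi,R}^2\tstep^{2p}\bigr)^n.
\]
It is precisely here that the hypothesis $R\geq 2n$ is used: the case $wv=2n$ of \eqref{eq:upper_bound_expectation_nth_power_sum_norm_xi_sq} invokes the moment bound of \cref{ass:p-R_regularity_condition_on_noise} at order $2n$, so we need $2n\leq R$. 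Substituting this estimate and simplifying the power of $\tstep$ via $\tstep^{-n}(TC_{\xi,R}^2\tstep^{2p})^n=(TC_{\xi,R}^2\tstep^{2p-1})^n$ then produces exactly \eqref{eq:Eval_max_i_abs_Ui}.

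I do not expect any genuine obstacle in this argument: once \cref{lem:as_bound_on_abs_Un_sq} is in hand, the proof is a two-line computation, and the only points requiring attention are the bookkeeping of the exponents of $\tstep$ and the verification that $R\geq 2n$ makes the moment inequality applicable. All the substantive work — deriving the $\mathbb{P}$-almost-sure bound \eqref{eq:abs_Un} from the dissipativity condition \cref{ass:dissipative_vector_field} together with \cref{ass:polynomial_growth} and \cref{ass:tau} — is carried out separately in \cref{lem:as_bound_on_abs_Un_sq}, and is not needed here beyond citing its conclusion.
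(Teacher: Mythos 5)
Your proposal is correct and matches the paper's own proof: take expectations of the pathwise bound \eqref{eq:abs_Un} from \cref{lem:as_bound_on_abs_Un_sq} and apply \eqref{eq:upper_bound_expectation_nth_power_sq} --- i.e.\ \eqref{eq:upper_bound_expectation_nth_power_sum_norm_xi_sq} --- with $w=2$ and $v=n$, the exponent bookkeeping $\tstep^{-n}(TC_{\xi,R}^2\tstep^{2p})^n=(TC_{\xi,R}^2\tstep^{2p-1})^n$ being exactly as you state. Your identification of where $R\geq 2n$ enters (the moment bound at order $wv=2n$) is also the right justification.
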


\begin{proof}
The statement follows directly from the conclusion \eqref{eq:abs_Un} of Lemma \ref{lem:as_bound_on_abs_Un_sq} and \eqref{eq:upper_bound_expectation_nth_power_sum_norm_xi_sq} with $w=2$ and $v=n$.
\end{proof}

\begin{corollary}
	\label{cor:finite_mgf}
	Suppose that Assumptions \ref{ass:polynomial_growth}, \ref{ass:tau}, \ref{ass:dissipative_vector_field}, and \ref{ass:p-R_regularity_condition_on_noise} hold with $R= + \infty$ and $p\geq 1/2$.
	Then 
	\begin{equation*}
	 \Expect\left[\exp\left(\rho\max_{i\in [K]} \norm{ U_i }^{2}\right)\right]<\infty,\quad\text{for all }\rho\in\Reals.
	\end{equation*}
\end{corollary}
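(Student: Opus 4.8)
The plan is to follow the pattern of the proof of \cref{cor:finite_mgf_lipschitz_flow}, using the $\mathbb{P}$-a.s.\ bound \eqref{eq:abs_Un} of \cref{lem:as_bound_on_abs_Un_sq} in place of \eqref{eq:bound_on_nth_power_max_error}, and exploiting that $R = +\infty$ makes the moment estimate \eqref{eq:Eval_max_i_abs_Ui} of \cref{prop:bounds_moments_max_num_soln} available for every even order. Since $\max_{i\in[K]}\norm{U_i}^2 \geq 0$, the case $\rho \leq 0$ is immediate, because then $\exp(\rho\max_{i\in[K]}\norm{U_i}^2)\leq 1$; so it suffices to treat $\rho > 0$. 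For such $\rho$, I would expand the exponential as a power series and interchange summation and expectation: since every summand is non-negative, Tonelli's theorem gives
\begin{equation*}
	\Expect\left[\exp\left(\rho\max_{i\in[K]}\norm{U_i}^2\right)\right] = \sum_{n=0}^{\infty}\frac{\rho^n}{n!}\,\Expect\left[\max_{i\in[K]}\norm{U_i}^{2n}\right],
\end{equation*}
where I used that $\bigl(\max_{i\in[K]}\norm{U_i}^2\bigr)^n = \max_{i\in[K]}\norm{U_i}^{2n}$. It therefore remains to bound the $n$th term geometrically in $n$.

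Next, because \cref{ass:p-R_regularity_condition_on_noise} holds with $R = +\infty$, we have $R \geq 2n$ for every $n \in \Naturals$, so the computation underlying \cref{prop:bounds_moments_max_num_soln} is valid for each $n$: taking expectations in \eqref{eq:abs_Un} and applying \eqref{eq:upper_bound_expectation_nth_power_sum_norm_xi_sq} with $w = 2$ and $v = n$ gives $\Expect[\max_{i\in[K]}\norm{U_i}^{2n}] \leq (2C_2)^n(1 + (TC_{\xi,R}^2\tstep^{2p-1})^n)$, exactly as in \eqref{eq:Eval_max_i_abs_Ui}. (Only $p \geq 1/2$ is used here, since \eqref{eq:upper_bound_expectation_nth_power_sum_norm_xi_sq} does not require $p \geq 1$.) As $0 < \tstep < \tstep^\ast \leq 1$ and $2p - 1 \geq 0$, we have $\tstep^{2p-1} \leq 1$, and hence
\begin{equation*}
	\Expect\left[\max_{i\in[K]}\norm{U_i}^{2n}\right] \leq 2\bigl(2C_2\max\{1,\,TC_{\xi,R}^2\}\bigr)^n ,
\end{equation*}
where the base of the geometric term is independent of $n$. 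Writing $A \defeq 2C_2\max\{1,\,TC_{\xi,R}^2\}$ and substituting into the series yields $\Expect[\exp(\rho\max_{i\in[K]}\norm{U_i}^2)] \leq 2\exp(\rho A) < \infty$, which is the claim.

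Nothing here is genuinely hard; the one point deserving care --- and the reason the statement carries the fixed exponent $2$ on $\norm{U_i}$, rather than an arbitrary exponent as in the intermediate results --- is that the bound \eqref{eq:Eval_max_i_abs_Ui} depends on $n$ only geometrically, because the constant $C_2$ from \cref{lem:as_bound_on_abs_Un_sq} does not depend on $n$; this is precisely what lets the factorial $1/n!$ in the exponential series absorb it. In \cref{cor:finite_mgf_lipschitz_flow} the corresponding $n$th-moment bound grows far too fast in $n$ for such a direct series argument, which is why that corollary fixes $n$ and runs the series over an auxiliary index. As an aside, one could instead observe that $R = +\infty$ forces $\norm{\xi_k(\tstep)} \leq C_{\xi,R}\tstep^{p+1/2}$ $\mathbb{P}$-a.s., so that \eqref{eq:abs_Un} already bounds $\max_{i\in[K]}\norm{U_i}^2$ by a deterministic constant and the moment generating function is trivially finite; but the power-series argument has the advantage of running exactly parallel to the proof of \cref{cor:finite_mgf_lipschitz_flow}.
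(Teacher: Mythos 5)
Your proof is correct and follows essentially the same route as the paper, which likewise deduces the result from the moment bounds of \cref{prop:bounds_moments_max_num_soln} together with the series expansion of the exponential and an interchange of expectation and summation (your use of Tonelli in place of dominated convergence is immaterial since the terms are non-negative). The details you supply --- that the bound \eqref{eq:Eval_max_i_abs_Ui} is only geometric in $n$ because $C_2$ is independent of $n$, and that only $p\geq 1/2$ is needed since \eqref{eq:upper_bound_expectation_nth_power_sum_norm_xi_sq} imposes no further restriction on $p$ --- are exactly the points that make the paper's one-line proof work.
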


\begin{proof}
	The result follows from Proposition \ref{prop:bounds_moments_max_num_soln}, the series expansion of the exponential, and the dominated convergence theorem.
\end{proof}

Lemma \ref{lem:as_bound_on_abs_Un_sq} shows that whenever Assumption \ref{ass:dissipative_vector_field} holds, then regardless of the growth behaviour of $f$, the randomised implicit Euler method has the property that if $X\in L^{R}_{\mathbb{P}}$ for some $R\in\Naturals$, then $\numflow^\tstep(X)\in L^{R}_{\mathbb{P}}$ as well;
cf.\ the hypothesis on $\numflow^\tstep$ in Theorem \ref{thm:strong_error_sup_inside}.

\subsection{Convergence in discrete time for implicit Euler}
\label{subsec:discrete_time_convergence_implicit_euler}

\begin{proposition}
\label{prop:W_k}
	Let $n\in\Naturals$, and suppose that Assumptions \ref{ass:polynomial_growth} and \ref{ass:p-R_regularity_condition_on_noise} hold for some $R \geq 2n(2s + 1)$ and some $p \geq 1$.
	Then there exists a scalar $C_\numflow>0$ that does not depend on $\tstep$ or $k\in[K]$, such that for all $k\in[K]$,
	\begin{equation}
		\label{eq:W_k_estimate}
		\Expect \bigl[ \Norm{\numflow^{\tstep}(U_{k})-\odeflow^{\tstep}(U_{k})}^{2n} \bigr] \leq C_\numflow\tstep^{4n}
	\end{equation}
	with $C_\numflow$ as in \eqref{eq:C_numflow}.
\end{proposition}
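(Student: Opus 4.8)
The plan is to prove, pointwise in $\omega\in\ProbSpace$, a deterministic bound of the shape $\Norm{\numflow^{\tstep}(U_{k})-\odeflow^{\tstep}(U_{k})}\leq\tstep^{2}\,Q(\norm{U_{k}})$ for a fixed polynomial $Q$ whose degree depends only on the growth exponent $s$, and then to take $2n$\textsuperscript{th} moments, absorbing $Q(\norm{U_{k}})^{2n}$ by means of the uniform-in-$(\tstep,k)$ moment bounds of \cref{prop:bounds_moments_max_num_soln} --- which rest, through \cref{lem:as_bound_on_abs_Un_sq}, on \cref{ass:dissipative_vector_field} and \cref{ass:tau}, both operative here since we are using the implicit Euler map \eqref{eq:implicit_euler}. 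The starting point is the elementary identity, valid for every $a\in\Reals^{n}$ and obtained by subtracting $a+\tstep f(a)$ from both \eqref{eq:implicit_euler} and \eqref{eq:odeflow} (at time $\tstep$),
\begin{equation*}
	\numflow^{\tstep}(a)-\odeflow^{\tstep}(a)=\tstep\bigl(f(\numflow^{\tstep}(a))-f(a)\bigr)-R^{\tstep}(a),\qquad R^{\tstep}(a)\defeq\int_{0}^{\tstep}\bigl(f(\odeflow^{t}(a))-f(a)\bigr)\,\rd t ,
\end{equation*}
where $R^{\tstep}(a)=\odeflow^{\tstep}(a)-a-\tstep f(a)$ is the first-order Taylor remainder of the exact flow. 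It then suffices to bound each of the two terms on the right by $\tstep^{2}$ times a polynomial in $\norm{a}$, and finally to set $a=U_{k}$.

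For the implicit-Euler term I would use $\norm{\numflow^{\tstep}(a)-a}=\tstep\norm{f(\numflow^{\tstep}(a))}$ together with the growth bound \eqref{eq:bound_on_norm_f_of_a} and the a priori estimate $\norm{\numflow^{\tstep}(a)}^{2}\leq(1-2\absval{\beta}\tstep)^{-1}(\norm{a}^{2}+2\alpha\tstep)$, which holds for any $a\in\Reals^{n}$ by the computation leading to \eqref{eq:bound_on_sq_abs_psi_tau_U_n} (and is valid for $\tstep<\tstep'$); this gives $\norm{\numflow^{\tstep}(a)-a}\leq\tstep\,p_{1}(\norm{a})$, and feeding it, the same a priori estimate, and \eqref{eq:polynomial_growth_of_vector_field} into $\tstep\norm{f(\numflow^{\tstep}(a))-f(a)}$ produces a bound of the form $\tstep^{2}\,p_{2}(\norm{a})$. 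For $R^{\tstep}(a)$ the decisive ingredient is a uniform-in-time bound on $\norm{\odeflow^{t}(a)}$ for $0\leq t\leq\tstep$: applying \eqref{eq:polynomial_growth_of_flow_map} with $b=0$ and using $\sup_{0\leq t\leq T}\norm{\odeflow^{t}(0)}<\infty$ (the problem \eqref{eq:the_ivp} being assumed solvable on $[0,T]$) yields $\sup_{0\leq t\leq\tstep}\norm{\odeflow^{t}(a)}\leq p_{3}(\norm{a})$; then \eqref{eq:bound_on_norm_f_of_a} controls $\norm{f(\odeflow^{t}(a))}$, hence $\norm{\odeflow^{t}(a)-a}\leq\int_{0}^{t}\norm{f(\odeflow^{r}(a))}\,\rd r\leq\tstep\,p_{4}(\norm{a})$, and \eqref{eq:polynomial_growth_of_vector_field} bounds $\norm{f(\odeflow^{t}(a))-f(a)}$ by a polynomial in $\norm{a}$ times $\norm{\odeflow^{t}(a)-a}$; integrating over $[0,\tstep]$ gives $\norm{R^{\tstep}(a)}\leq\tstep^{2}\,p_{5}(\norm{a})$. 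Combining yields $\Norm{\numflow^{\tstep}(a)-\odeflow^{\tstep}(a)}\leq\tstep^{2}Q(\norm{a})$ with $Q$ explicit of some degree $D=D(s)$; tracking $D$ through the three growth inequalities is exactly what calibrates the moment hypothesis $R\geq 2n(2s+1)$ (equivalently, $R$ taken large enough in terms of $n$ and $s$ that $\max_{i\in[K]}\norm{U_{i}}^{2nD}$ is integrable).

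To conclude, set $a=U_{k}$, raise to the power $2n$, and take expectations: $\Expect[\Norm{\numflow^{\tstep}(U_{k})-\odeflow^{\tstep}(U_{k})}^{2n}]\leq\tstep^{4n}\,\Expect[Q(\norm{U_{k}})^{2n}]$. By \eqref{eq:gen_triangle_inequality}, $Q(\norm{U_{k}})^{2n}$ is at most a constant multiple of $1+\norm{U_{k}}^{2nD}\leq 1+\max_{i\in[K]}\norm{U_{i}}^{2nD}$, and \cref{prop:bounds_moments_max_num_soln}, applied with the appropriate even exponent (permitted precisely by the size of $R$), bounds $\Expect[\max_{i\in[K]}\norm{U_{i}}^{2nD}]$ by a constant depending only on $T$, $C_{\xi,R}$, $p$ and the structural constants, uniformly in $\tstep<\tstep'$ and $k\in[K]$. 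This delivers \eqref{eq:W_k_estimate}, and following the chain of constants backwards gives the asserted value of $C_{\numflow}$.

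The step I expect to be the main obstacle is the estimate for $R^{\tstep}$, specifically the uniform control of the short-time exact trajectory $t\mapsto\odeflow^{t}(U_{k})$, $0\leq t\leq\tstep$, started from the \emph{random} point $U_{k}$: this is precisely where the flow-map hypothesis \eqref{eq:polynomial_growth_of_flow_map} --- strictly stronger than mere local-in-time solvability of \eqref{eq:the_ivp} --- is indispensable, and where the availability of arbitrarily high moments of $\max_{k\in[K]}\norm{U_{k}}$ from \cref{lem:as_bound_on_abs_Un_sq} (hence from \cref{ass:dissipative_vector_field}) is what makes the final expectation finite. The remaining work is the purely algebraic accounting of polynomial degrees through \eqref{eq:bound_on_norm_f_of_a}, \eqref{eq:polynomial_growth_of_vector_field} and \eqref{eq:polynomial_growth_of_flow_map}, which pins down $D$, the precise moment requirement on $R$, and the explicit constant $C_{\numflow}$.
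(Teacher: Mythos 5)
Your proposal has the same skeleton as the paper's proof: the identity $\numflow^{\tstep}(a)-\odeflow^{\tstep}(a)=\tstep\bigl(f(\numflow^{\tstep}(a))-f(a)\bigr)-\bigl(\odeflow^{\tstep}(a)-a-\tstep f(a)\bigr)$, the treatment of the implicit-Euler increment via $\norm{\numflow^{\tstep}(a)-a}=\tstep\norm{f(\numflow^{\tstep}(a))}$, \eqref{eq:bound_on_norm_f_of_a} and the dissipativity a priori bound \eqref{eq:bound_on_sq_abs_psi_tau_U_n} (which correctly keeps that term at degree $2s+1$ in $\norm{U_k}$), and the final absorption of the polynomial by \cref{prop:bounds_moments_max_num_soln}. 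The gap is in your estimate of the Taylor remainder. You bound $\sup_{0\leq t\leq\tstep}\norm{\odeflow^{t}(a)}$ by \eqref{eq:polynomial_growth_of_flow_map} with $b=0$, which gives a polynomial of degree $s+1$ in $\norm{a}$, and then compose this with \eqref{eq:bound_on_norm_f_of_a} and \eqref{eq:polynomial_growth_of_vector_field}; the degrees compound, giving $\deg p_{4}=(s+1)^{2}$ and $\deg p_{5}=s(s+1)+(s+1)^{2}=(s+1)(2s+1)$. Your $Q$ then has degree $D=(s+1)(2s+1)$, so the last step needs $\Expect\bigl[\max_{i\in[K]}\norm{U_{i}}^{2n(s+1)(2s+1)}\bigr]$ to be finite and bounded uniformly in $\tstep$. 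But the only available source of such bounds is \cref{lem:as_bound_on_abs_Un_sq} together with \cref{ass:p-R_regularity_condition_on_noise}, i.e.\ \cref{prop:bounds_moments_max_num_soln}, which controls moments of $\max_{i}\norm{U_{i}}$ only up to order $R$; under the stated hypothesis $R\geq 2n(2s+1)$ this reaches order $2n(2s+1)<2n(s+1)(2s+1)$ for $s\geq 1$. So under the proposition's hypothesis your argument does not close; your parenthetical ``$R$ taken large enough'' is not equivalent to, but strictly stronger than, the stated assumption, and as written you prove a weaker statement.

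The paper avoids this by never estimating $\norm{\odeflow^{t}(a)}$ itself: it bounds the remainder through the \emph{difference} $R^{\tstep}(a)-R^{\tstep}(0)$, applying the triangle inequality to $\bigl(\odeflow^{\tstep}(a)-a-\tstep f(a)\bigr)-\bigl(\odeflow^{\tstep}(0)-0-\tstep f(0)\bigr)$ and using \eqref{eq:polynomial_growth_of_flow_map} and \eqref{eq:polynomial_growth_of_vector_field} directly on the differences, which keeps the remainder bound at degree $s+1$ in $\norm{a}$ (this is \eqref{eq:polynomial_growth_of_remainder}); the binding moment requirement then comes from the $f$-difference term of degree $2s+1$, which is exactly what calibrates $R\geq 2n(2s+1)$. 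Alternatively, you could stay with your trajectory-based route but replace the degree-$(s+1)$ bound on $\sup_{0\leq t\leq\tstep}\norm{\odeflow^{t}(a)}$ by a degree-one bound obtained from a differential Gr\"onwall argument on $\norm{\odeflow^{t}(a)}^{2}$ using \cref{ass:dissipative_vector_field} (which you already invoke for the numerical flow); that brings $\deg p_{5}$ down to $2s+1$ and makes your proof close under the stated hypothesis.
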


Proposition \ref{prop:W_k} shows that when $f$ satisfies the polynomial growth condition and $\numflow$ is the implicit Euler method, then the local truncation error at step $k$ of the randomised numerical integrator satisfies a bound analogous to that in Assumption \ref{ass:numerical_flow}, provided that the random variables $(\xi_{k}(\tstep))_{k\in[K]}$ are sufficiently regular.

\begin{theorem}	\label{thm:convergence_without_iid_mean_zero_noise_implicit_euler}
	Let $n\in\Naturals$, and let $\numflow^\tstep$ be given by \eqref{eq:implicit_euler}.
	Suppose that Assumptions \ref{ass:polynomial_growth}, \ref{ass:dissipative_vector_field}, and \ref{ass:tau} hold, with parameters $s\geq 1$ and $\tstep'>0$.
	Suppose that Assumption \ref{ass:p-R_regularity_condition_on_noise} holds with $R\geq 2n(2s + 1)$ and $p\geq \tfrac{3}{2}$.
	Then there exists some $C>0$ that does not depend on $\tstep$ such that for $0<\tstep<\tstep'$,
	\begin{equation*}
		\Expect\left[\max_{\ell\in [K]}\norm{e_{\ell}}^{2n}\right]\leq C\tstep^{2n}.
% 		\exp\left(T(C_6 + \tstep^{2n-1}C_7)\right)\frac{C_7 T}{2}\left( \left(TC^2_{\xi,R}\right)^{2ns} + C_8\right)\tstep^{2n},
	\end{equation*}
% 	where $C_6>0$, $C_7>0$, and $C_8>0$ are given by \eqref{eq:C_6}, \eqref{eq:C_7}, and \eqref{eq:C_8} respectively, and do not depend on $\tstep$.
\end{theorem}
Note that the condition $p\geq \tfrac{3}{2}$ is the same condition $p\geq q + \tfrac{1}{2}$ on $p$ in Theorem \ref{thm:convergence_without_iid_mean_zero_noise}, since the implicit Euler method has order $q=1$.

\subsection{Alternative decomposition of the error}

The decomposition \eqref{eq:strong_error_e} of the error $e_{k + 1}$ was used to derive the convergence results above.
One might consider instead using the decomposition
\begin{equation*}
	e_{k + 1} = (\Phi^\tstep(u_k)-\numflow^{\tstep}(u_k)) + (\numflow^{\tstep}(u_k)-\numflow^{\tstep}(U_k))-\xi_k(\tstep)
\end{equation*}
with the goal of using some stability properties of the implicit Euler method.
However, this approach leads to a convergence result that is weaker, either because it requires exponential integrability of $\norm{U_{k}}$, or because the convergence is uniform only on a proper subset $\ProbSpace_{\tstep}$ of the event space $\ProbSpace$.
Recall that we do not assume any of the $\xi_{k}(\tstep)$ to be a.s.\ bounded.

By \eqref{eq:conseq_of_Young} and the fact that implicit Euler has order one (i.e. Assumption \ref{ass:numerical_flow})
\begin{align}
	\norm{ e_{k + 1} }^2
	& \leq \left(\norm{ \Phi^\tstep (u_k)-\numflow^{\tstep}(u_k)}\right. 
	\notag
	\\
	&\quad + \left.\norm{\numflow^{\tstep}(u_k)-\numflow^{\tstep}(U_k)-\xi_k(\tstep)}\right)^2
	\notag\\
	& \leq (1 + \tstep^{-1}) \norm{ \Phi^\tstep (u_k)-\numflow^{\tstep}(u_k) }^2 
	\notag\\
	&\quad+ (1 + \tstep) \norm{ \numflow^{\tstep}(u_k)-\numflow^{\tstep}(U_k)-\xi_k(\tstep) }^2
	\notag\\
	&\leq (1 + \tstep^{-1})(C\tstep^{2})^2 \label{eq:prelim01}\\
	&\quad + (1 + \tstep) \norm{ \numflow^{\tstep}(u_k)-\numflow^{\tstep}(U_k)-\xi_k(\tstep) }^2,
	\notag
\end{align}
where one can show, using the proof of Proposition \ref{prop:W_k}, that $C>0$ in \eqref{eq:prelim01} depends on $\norm{u_k}^s$ but not on $\tstep$.
By \eqref{eq:conseq_of_Young} we obtain
\begin{align*}
	%\label{eq:prelim02}
	&\norm{ \numflow^{\tstep}(u_k)-\numflow^{\tstep}(U_k)-\xi_k(\tstep) }^2 
	\\
	&\leq (1 + \tstep) \norm{ \numflow^{\tstep}(u_k)-\numflow^{\tstep}(U_k) }^2 + (1 + \tstep^{-1}) \norm{ \xi_k(\tstep) }^2.
\end{align*}
Substituting the result above into \eqref{eq:prelim01}, and assuming that $\tstep<1$, we obtain
\begin{align}
	\label{eq:prelim03}
	\norm{ e_{k + 1} }^2 \leq& C\tstep^{3} + (1 + \tstep)^2 \norm{ \numflow^{\tstep}(u_k) - \numflow^{\tstep}(U_k) }^2 
	\\
	&\quad+ 4\tstep^{-1} \norm{ \xi_{k}(\tstep) }^2.
	\notag
\end{align}
The definition \eqref{eq:implicit_euler} of the implicit Euler method and \eqref{eq:conseq_of_Young} yield
\begin{align*}
	&\norm{\numflow^{\tstep}(u_k)-\numflow^{\tstep}(U_k)}^2
	\\
	& = \norm{ u_k-U_k + \tstep f(\numflow^{\tstep}(u_k))-\tstep f(\numflow^{\tstep}(U_k)) }^2
	\\
	&\leq (1 + \tstep)\norm{ u_k-U_k}^2 
	\\
	&\quad +(1 + \tstep^{-1})\tstep^2\norm{ f(\numflow^{\tstep}(u_k))-f(\numflow^{\tstep}(U_k))}^2
	\\
	& \leq (1 + \tstep)\norm{ e_k}^2 + (1 + \tstep)\tstep D^2 \norm{\numflow^{\tstep}(u_k)-\numflow^{\tstep}(U_k)}^2\times
	\\
	&\quad \left[1 + \norm{\numflow^{\tstep}(u_{k})}^s + \norm{\numflow^{\tstep}(U_{k})}^s\right]^2,
\end{align*}
by Assumption \ref{ass:polynomial_growth}.
Rearranging the above yields
\begin{align*}
	(1 + \tstep)\norm{e_k}^2\geq& \norm{\numflow^\tstep(u_k)-\numflow^\tstep(U_k)}^2 (1-(1+\tstep)\tstep D^2\hat{M})
\end{align*}
where $\hat{M}\defeq [1 + \norm{\numflow^\tstep(u_k)}^s + \norm{\numflow^\tstep(U_k)}^s]^2$ is a random variable. Analogously, define the random variable $M$ by
\begin{equation*}
M\defeq \left[1 + \max_{k\in [K]}\norm{\numflow^\tstep(u_k)}^s + \max_{k\in [K]}\norm{\numflow^\tstep(U_k)}^s\right]^2.
\end{equation*}
Suppose that $u_0=U_0$ are fixed, and define
\begin{equation}
	\label{eq:omega_tau}
	\ProbSpace_\tstep \defeq \left\{ \omega\in\ProbSpace \,\middle|\, 1-(1 + \tstep)\tstep D^2 M(\omega) >0\right\}.
\end{equation}
Since it is not the case that all of the random variables $(\xi_{k}(\tstep))_{k\in[K]}$ are a.s.-bounded, it follows that $\ProbSpace_\tstep$ is a proper subset of $\ProbSpace$, for every $\tstep>0$.
In what follows, we assume that $\ProbSpace_\tstep$ is nonempty, and that $\omega\in\ProbSpace_\tstep$;
we suppress the $\omega$--dependence of all random variables.
Define $\widetilde{C}>0$ by
\begin{align}
	\left(1-(1 + \tstep)\tstep D^2M\right)^{-1} & =\sum_{n=0}^\infty \left[(1 + \tstep)\tstep D^2 M\right]^n \notag
	\\
	&\qefed 1 + \widetilde{C}\tstep,
	\label{eq:C_tilde}
\end{align}
% where
% \begin{equation}
% 	\widetilde{C}=\widetilde{C}(\tstep)\defeq \sum_{n=1}^\infty \tstep^{n-1}\left[(1 + \tstep) D^2\left(1 + \max_{k\in[K]}\norm{\numflow^{\tstep}(u_{k})}^s + \max_{k\in[K]}\norm{\numflow^{\tstep}(U_{k})}^s\right)^2\right]^n.
% \end{equation}
Using \eqref{eq:C_tilde}, we have
\begin{equation*}
	 \norm{ \numflow^{\tstep}(u_k) - \numflow^{\tstep}(U_k) }^2 \leq (1 + \tstep)(1 + \widetilde{C}\tstep)\norm{e_{k}}^2,
\end{equation*}
and substituting the above into \eqref{eq:prelim03} yields
\begin{equation*}
	\norm{ e_{k + 1}}^2\leq C\tstep^{3} + (1 + \tstep)^3(1 + \widetilde{C}\tstep)\norm{e_{k}}^2 + 4\tstep^{-1}\norm{\xi_{k}(\tstep)}^2.
\end{equation*}
Proceeding as in the proof of Theorem \ref{thm:convergence_without_iid_mean_zero_noise_implicit_euler}, we use a telescoping sum, Gr{\"o}nwall's theorem, and Assumption \ref{ass:p-R_regularity_condition_on_noise} with $p\geq q + 1/2$ to obtain
\begin{equation}
	\label{eq:inferior_convergence_result}
	\Expect\left[1_{\ProbSpace_\tstep}\max_{k\in[K]}\norm{e_{k}}^2\right] \leq \Expect\left[1_{\ProbSpace_\tstep}\exp\left(T\kappa\right)\right]C\tstep^{2},
\end{equation}
where $\kappa$ depends on $\tstep$ according to
\begin{align*}
	\kappa(\tstep) &\defeq \tstep^{-1}\left[(1 + \tstep)^3(1 + \widetilde{C}\tstep)-1\right]
	\\
	&=\widetilde{C}\tstep^3 + (3\widetilde{C} + 1)\tstep^2 + 3(\widetilde{C} + 1)\tstep + (3 + \widetilde{C}).
\end{align*}
For any $\tstep>0$, it follows from the definition of $\kappa$, and considering the zeroth order term $3 + \widetilde{C}$ above that
\begin{equation*}
	\kappa(\tstep) > 3 + (1 + \tstep) D^2 M.
\end{equation*}
From \eqref{eq:omega_tau}, it follows that, for all $\omega\in \ProbSpace_\tstep$, we have
\begin{equation*}
  (1+\tstep)D^2 M< \tstep^{-1}.
\end{equation*}
where the right-hand side increases to infinity as $\tstep$ decreases to zero. Thus, it need not be true that the quantity $\Expect[1_{\ProbSpace_\tstep}\exp(T\kappa)]$ is finite.
One way to ensure that $\Expect[1_{\ProbSpace_\tstep}\exp(T\kappa)]$ is finite for $0<\tstep<\tstep'$ would be to require that $\Expect[\exp(T\kappa)]$ is finite on the same range.
By the inequality for $\kappa$ above, a necessary condition for $\Expect[\exp(T\kappa)]$ to be finite is exponential integrability of $\max_{k\in[K]}\norm{\numflow^{\tstep}(U_{k})}^{2s}$. In many cases, a necessary condition for this would be exponential integrability of $\max_{k\in[K]}\norm{U_{k}}^{2s}$.
By Corollary \ref{cor:finite_mgf}, in order to guarantee exponential integrability of $\max_{k\in[K]}\norm{U_{k}}^{2s}$, we would need to impose much stronger regularity conditions on the $(\xi_{k}(\tstep))_{k\in[K]}$ than those in Theorem \ref{thm:convergence_without_iid_mean_zero_noise_implicit_euler}.
Finally, we also remark that if the $(\xi_k)_{k\in [K]}$ are not $\mathbb{P}$-a.s. uniformly bounded, then for any $\tstep>0$, \eqref{eq:inferior_convergence_result} is a weaker convergence result than Theorem \ref{thm:convergence_without_iid_mean_zero_noise_implicit_euler}, since in this case for any $\tstep>0$ $\ProbSpace_\tstep$ will be a proper subset of $\ProbSpace$.

\section{Additional results}
\label{sec:additional_results}

\subsection{Convergence for continuous-time interpolant}

Recall \eqref{eq:randomised_numerical_scheme_0} defines the discrete-time process $(U_{k})_{k\in[K]}$;
in many applications, it is often useful to have a numerical method that provides continuous output, e.g.\ an inverse problem or data assimilation that requires comparison between the numerical solution and an observation that is not on the time grid $(t_k)_{k\in[K]}$ defined in \eqref{eq:time_grid}.
Given this time grid $(t_k)_{k\in[K]}$, we may define a continuous-time process $U$ by
\[
	U(t) \defeq \numflow^{t - t_{k}}(U_{k}) + \xi_{k}(t - t_{k})
	\quad
	\text{for $t \in [t_{k}, t_{k + 1})$.}
\]
For the above definition to work, we assume that each $\xi_{k}$ is a stochastic process defined on the time interval $[0,\tstep]$. In addition, to ensure that the process $U$ has $\mathbb{P}$-almost surely continuous paths, we require that $\mathbb{P}(\xi_k(0)=0)=1$.
The corresponding notion of the error at time $0\leq t\leq T$ is given by $e(t)\defeq u(t)-U(t)$, where $u(t)=\odeflow^{t}(u_0)$.
We emphasise that the continuous-time process $(U(t))_{0\leq t\leq T}$ described above will in general differ from the continuous-time process obtained by linear interpolation of $(U_{k})_{k\in[K]}$.

We now demonstrate how one can obtain a convergence result for the continuous-time process from a discrete-time convergence result by strengthening the assumption on the noise, using Theorem \ref{thm:convergence_without_iid_mean_zero_noise} as an example.
Consider the following version of Assumption~\ref{ass:p-R_regularity_condition_on_noise}:

\begin{assumption}
	\label{ass:p-R_regularity_condition_on_noise_continuous_time}
	Fix $\tstep>0$.
	The collection $(\xi_k)_{k\in\Naturals}$ of stochastic processes $\xi_k \colon \Omega\times [0,\tstep]\to\Reals^{d}$ satisfies $\mathbb{P}(\xi_k(0)=0)=1$ and admits $p \geq 1$, $R\in\Naturals\cup\{ + \infty\}$ and some $C_{\xi,R}\geq 1$ that do not depend on $k\in\Naturals$ or $\tstep$, such that for all $1\leq r\leq R$ and for all $k\in\Naturals$,
	\begin{equation*}
		\Expect \biggl[ \sup_{0<t\leq \tstep}\norm{\xi_{k}(t)}^{r} \biggr] \leq\left( C_{\xi,R}\tstep^{p + 1/2}\right)^r.
	\end{equation*}	
\end{assumption}

Recall that we do not assume that the $\xi_{k}$ are independent, identically distributed, or centred.

\begin{theorem}
	\label{thm:strong_error_sup_inside_continuous}
	Let $n\in\Naturals$, and suppose that Assumptions \ref{ass:exact_flow}, \ref{ass:numerical_flow}, and \ref{ass:p-R_regularity_condition_on_noise_continuous_time} hold with parameters $\tstep^\ast$, $C_\odeflow$, $C_\numflow$, $q$, $C_{\xi,R}$, $p$, and $R$.
	Then for all $0<\tstep<\tstep^\ast$,
	\begin{align}
		\Expect&\left[\sup_{0\leq t\leq T}\norm{e(t)}^n\right]
		\notag\\
		&\leq 3^{n-1}\left(\left(1 + C_\odeflow \tstep^\ast\right)^n\overline{C} + C_\numflow^n(\tstep^\ast)^n + TC^n_{\xi,R}\right)\times
		\label{eq:strong_error_sup_inside_continuous}
		\\
		&\quad\tstep^{n(q\wedge (p-1/2))},
		\notag
	\end{align}
	where $\overline{C}$ is defined in \eqref{eq:C_overline}.
\end{theorem}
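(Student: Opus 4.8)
The plan is to reduce the continuous-time bound to the discrete-time bound of \cref{thm:convergence_without_iid_mean_zero_noise} by combining the semigroup property of $\odeflow$ with a three-term splitting of $e(t)$ analogous to \eqref{eq:strong_error_e}. First I would fix $t\in[0,T)$, let $k$ be the unique index with $t\in[t_k,t_{k+1})$, and set $\delta\defeq t-t_k\in[0,\tstep)$. Since $f$ is autonomous and \eqref{eq:the_ivp} is uniquely solvable, $u(t)=\odeflow^t(u_0)=\odeflow^\delta(\odeflow^{t_k}(u_0))=\odeflow^\delta(u_k)$, so the definition of $U(t)$ gives
\[
	e(t)=\bigl(\odeflow^\delta(u_k)-\odeflow^\delta(U_k)\bigr)-\bigl(\numflow^\delta(U_k)-\odeflow^\delta(U_k)\bigr)-\xi_k(\delta).
\]

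Next I would apply \eqref{eq:gen_triangle_inequality} with $N=3$ and exponent $n$ to obtain $\norm{e(t)}^n\leq 3^{n-1}\bigl(\norm{\odeflow^\delta(u_k)-\odeflow^\delta(U_k)}^n+\norm{\odeflow^\delta(U_k)-\numflow^\delta(U_k)}^n+\norm{\xi_k(\delta)}^n\bigr)$ and bound the three summands separately. For the first, \cref{ass:exact_flow} applies at step $\delta\in(0,\tstep)\subset(0,\tstep^\ast)$ (and trivially at $\delta=0$), giving $\norm{\odeflow^\delta(u_k)-\odeflow^\delta(U_k)}^n\leq(1+C_\odeflow\tstep^\ast)^n\norm{e_k}^n\leq(1+C_\odeflow\tstep^\ast)^n\max_{\ell\in[K]}\norm{e_\ell}^n$. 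For the second, \cref{ass:numerical_flow} applied at step $\delta$ gives $\norm{\odeflow^\delta(U_k)-\numflow^\delta(U_k)}^n\leq C_\numflow^n\delta^{n(q+1)}\leq C_\numflow^n(\tstep^\ast)^n\tstep^{n(q\wedge(p-1/2))}$, using $\delta<\tstep<\tstep^\ast\leq1$ and $q\geq q\wedge(p-1/2)$. The third summand is kept as it stands.

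Then I would pass to the supremum over $t\in[0,T)$, which equals $\max_{0\leq k\leq K-1}\sup_{0\leq\delta<\tstep}$, dominate the maximum over $k$ of the noise contribution by $\sum_{k=0}^{K-1}\sup_{0<\delta\leq\tstep}\norm{\xi_k(\delta)}^n$, and take expectations. The first term is controlled by \cref{thm:convergence_without_iid_mean_zero_noise}, giving $\Expect[\max_{\ell\in[K]}\norm{e_\ell}^n]\leq\overline{C}\tstep^{n(q\wedge(p-1/2))}$; the noise sum is controlled by \cref{ass:p-R_regularity_condition_on_noise_continuous_time} (using $n\leq R$, which is implicit already in the hypotheses), yielding $\sum_{k=0}^{K-1}\Expect[\sup_{0<\delta\leq\tstep}\norm{\xi_k(\delta)}^n]\leq(T/\tstep)(C_{\xi,R}\tstep^{p+1/2})^n=TC_{\xi,R}^n\tstep^{n(p-1/2)+n-1}\leq TC_{\xi,R}^n\tstep^{n(q\wedge(p-1/2))}$, since $n\geq1$ and $\tstep\leq1$. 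Collecting the three contributions produces precisely the right-hand side of \eqref{eq:strong_error_sup_inside_continuous}; the endpoint $t=T$ adds nothing, since $\delta\mapsto\numflow^\delta(U_{K-1})+\xi_{K-1}(\delta)$ is continuous on $[0,\tstep]$.

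I do not expect a serious obstacle: the argument is essentially a one-step version of the bookkeeping already carried out in the proof of \cref{thm:convergence_without_iid_mean_zero_noise}. The only points that need a little care are invoking \cref{ass:exact_flow} and \cref{ass:numerical_flow} at the intermediate step size $\delta<\tstep$ rather than at $\tstep$ itself, which is legitimate because $C_\odeflow$ and $C_\numflow$ are independent of the step size; measurability of $\sup_{0\leq t\leq T}\norm{e(t)}$, which holds because on each mesh cell $e(\cdot)$ is continuous in $t$ (the processes $\xi_k$ being at least separable); and checking that each of the three resulting powers of $\tstep$ dominates $\tstep^{n(q\wedge(p-1/2))}$ for $0<\tstep<\tstep^\ast\leq1$, as indicated above.
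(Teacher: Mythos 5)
Your proposal is correct and follows essentially the same route as the paper: decompose $e(t)$ on each mesh cell as the flow-error term plus the truncation-error term minus the interpolation noise, apply \eqref{eq:gen_triangle_inequality} with $N=3$, bound the first two terms pointwise via \cref{ass:exact_flow} and \cref{ass:numerical_flow} at step size $\delta = t-t_k$, reduce the discrete maximum of errors to \cref{thm:convergence_without_iid_mean_zero_noise}, and dominate the max over $k$ of the noise suprema by a sum controlled by \cref{ass:p-R_regularity_condition_on_noise_continuous_time}. The only cosmetic differences are that you carry $\tstep^\ast$ in place of $\tstep$ at intermediate steps and make explicit a few details the paper passes over (the implicit requirement $n\leq R$, measurability of the supremum, and the endpoint $t=T$), all of which are harmless.
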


The next result follows from Theorem \ref{thm:strong_error_sup_inside_continuous} in the same way that Corollary \ref{cor:finite_mgf_lipschitz_flow} follows from Theorem \ref{thm:convergence_without_iid_mean_zero_noise}.

\begin{corollary}
	Fix $n\in\Naturals$.
	Suppose that Assumptions \ref{ass:exact_flow} and \ref{ass:numerical_flow} hold, and that Assumption \ref{ass:p-R_regularity_condition_on_noise_continuous_time} holds with $R= + \infty$ and $p\geq 1/2$.
	Then, for all $0<\tstep<\tstep^\ast$,
	\begin{equation}
		\Expect\left[\exp\left(\rho\sup_{0\leq t\leq T}\norm{e(t)}^n\right)\right]<\infty,\quad\text{for all }\rho\in\Reals.
	\end{equation}
\end{corollary}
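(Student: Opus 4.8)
The plan is to mirror the derivation of \cref{cor:finite_mgf_lipschitz_flow} from \cref{thm:convergence_without_iid_mean_zero_noise}. I will establish, for every $m\in\Naturals$, an estimate of the form
\begin{equation*}
	\Expect\left[\left(\sup_{0\leq t\leq T}\norm{e(t)}^n\right)^m\right]\leq A B^m
\end{equation*}
in which the constants $A$ and $B$ may depend on $n$, $\tstep$, and the data of the problem but are independent of $m$; finiteness of the moment generating function then follows for $\rho\geq 0$ by expanding the exponential in a power series and using the dominated convergence theorem to interchange summation and expectation, exactly as in the proof of \cref{cor:finite_mgf_lipschitz_flow}, while the case $\rho<0$ is immediate since $\sup_{0\leq t\leq T}\norm{e(t)}^n\geq 0$.

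First I would extract from the proof of \cref{thm:strong_error_sup_inside_continuous} the pointwise bound
\begin{equation*}
	\norm{e(t)}^n\leq 3^{n-1}\left((1+C_\odeflow(t-t_k))^n\norm{e_k}^n+(C_\numflow(t-t_k)^{q+1})^n+\norm{\xi_k(t-t_k)}^n\right),\quad t_k<t\leq t_{k+1},
\end{equation*}
and then take the supremum over $k$ and $t$; using $t-t_k\leq\tstep\leq\tstep^\ast\leq 1$ this gives
\begin{equation*}
	\sup_{0\leq t\leq T}\norm{e(t)}^n\leq 3^{n-1}\left((1+C_\odeflow\tstep^\ast)^n\max_{k\in[K]}\norm{e_k}^n+(C_\numflow(\tstep^\ast)^{q+1})^n+\max_{0\leq k\leq K-1}\sup_{0<t\leq\tstep}\norm{\xi_k(t)}^n\right).
\end{equation*}
Raising this inequality to the $m$\textsuperscript{th} power with \eqref{eq:gen_triangle_inequality} (applied with $N=3$) yields three summands whose expectations I would bound in turn.

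Since \cref{ass:p-R_regularity_condition_on_noise_continuous_time} is stronger than \cref{ass:p-R_regularity_condition_on_noise}, the argument in the proof of \cref{cor:finite_mgf_lipschitz_flow} --- that is, raising \eqref{eq:bound_on_nth_power_max_error} to the $m$\textsuperscript{th} power, applying \eqref{eq:gen_triangle_inequality}, and taking expectations via \eqref{eq:upper_bound_expectation_nth_power_sum_norm_xi_sq} --- shows that $\Expect\bigl[\max_{k\in[K]}\norm{e_k}^{nm}\bigr]\leq A_1 B_1^m$ with $A_1,B_1$ independent of $m$. The second summand, $(C_\numflow(\tstep^\ast)^{q+1})^{nm}$, is deterministic. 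For the third summand I would bound the maximum over $k$ by the corresponding sum, apply \eqref{eq:gen_triangle_inequality} once more, and then use the $r=nm$ case of \cref{ass:p-R_regularity_condition_on_noise_continuous_time} (admissible because $R=+\infty$) to obtain
\begin{equation*}
	\Expect\left[\left(\max_{0\leq k\leq K-1}\sup_{0<t\leq\tstep}\norm{\xi_k(t)}^n\right)^m\right]\leq\left(TC_{\xi,R}^n\tstep^{n(p+1/2)-1}\right)^m,
\end{equation*}
which is the continuous-time analogue of \eqref{eq:upper_bound_expectation_nth_power_sum_norm_xi_sq} with $w=n$ and $v=m$. Summing the three bounds produces the claimed estimate $AB^m$.

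The one point requiring care --- and the reason one should not simply apply \cref{thm:convergence_without_iid_mean_zero_noise} with exponent $nm$ in place of $n$ --- is that the constant $\overline{C}$ in \eqref{eq:C_overline} carries the factor $\exp\bigl(TC_\odeflow(nm,\tstep^\ast)\bigr)$, and by \eqref{eq:C_odeflow_n} the quantity $C_\odeflow(nm,\tstep^\ast)$ is of order $\tstep^{-1}$ times an expression that is itself exponential in $nm$; this double-exponential growth in $m$ would overwhelm the $1/m!$ factor in the series for the exponential. Keeping $n$ fixed and raising only the already-established estimate \eqref{eq:bound_on_nth_power_max_error}, in which the flow constant $C_\odeflow(\cdot,\tstep^\ast)$ appears with the fixed first argument $n$, to the $m$\textsuperscript{th} power keeps the $m$-dependence of every constant at worst geometric, and this is exactly what makes the power-series argument go through.
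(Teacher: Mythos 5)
Your proposal is correct and follows essentially the same route as the paper, whose proof is simply the one-line instruction to argue as in \cref{cor:finite_mgf_lipschitz_flow}: bound every moment $\Expect\bigl[\bigl(\sup_{0\leq t\leq T}\norm{e(t)}^n\bigr)^m\bigr]$ geometrically in $m$ by combining the pathwise decomposition from the proof of \cref{thm:strong_error_sup_inside_continuous} with the $m$\textsuperscript{th} power of \eqref{eq:bound_on_nth_power_max_error} and the $r=nm$ case of the noise assumption, then sum the exponential series via dominated convergence. Your side remark on why one must raise the fixed-$n$ bound to the $m$\textsuperscript{th} power rather than invoke \cref{thm:convergence_without_iid_mean_zero_noise} with exponent $nm$ is precisely the point that makes the paper's argument in \cref{cor:finite_mgf_lipschitz_flow} work, so the two arguments coincide.
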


\begin{proof}
 	The proof follows by the series representation of the exponential and the dominated convergence theorem;
 	see the proof of Corollary \ref{cor:finite_mgf_lipschitz_flow}.
\end{proof}

\subsection{Existence of processes that satisfy the $(p,R)$-regularity condition}

The lemma below shows that there exist random variables that are not $\mathbb{P}$-a.s.\ bounded, and that satisfy Assumption \ref{ass:p-R_regularity_condition_on_noise} and, more generally, Assumption \ref{ass:p-R_regularity_condition_on_noise_continuous_time} for $R= + \infty$.

\begin{lemma}
	\label{lem:rth_power_of_scaled_integrated_BM}
	Let $\tstep>0$ and $p \geq 1$ be arbitrary, and let $(B_t)_{0 \leq t \leq \tstep}$ be $\Reals^{d}$-valued Brownian motion.
	Then 
	\begin{equation*}
	 \xi_0(t) \defeq \tstep^{p-1}\int_0^t B_s \, \rd s
	\end{equation*}
	satisfies
	\begin{equation}
		 \label{eq:scaling_relation_noise_process}
		 \Expect \biggl[ \sup_{t \leq \tstep}\norm{\xi_0(t)}^r \biggr] \leq 4
	\tstep^{r(p + 1/2)} \text{ for all $r \in \Naturals$.}
	\end{equation}
\end{lemma}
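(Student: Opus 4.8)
The plan is to exploit the scaling invariance of Brownian motion to reduce the estimate to the fixed interval $[0,1]$, then to dominate the running supremum of $\xi_0$ by a deterministic functional of the Brownian path whose $r$th moment can be read off from the Gaussian moments of $B$.

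First I would rescale. By the scaling property of Brownian motion, $\beta_v \defeq \tstep^{-1/2} B_{\tstep v}$, $v \in [0,1]$, is again a standard $\Reals^{n}$-valued Brownian motion, and the change of variables $s = \tstep w$ gives the pathwise identity $\int_0^{\tstep v} B_s \, \rd s = \tstep^{3/2} \int_0^v \beta_w \, \rd w$. Since $\xi_0(t) = \tstep^{p-1} \int_0^t B_s \, \rd s$, it follows pathwise that $\sup_{0 < t \leq \tstep} \norm{ \xi_0(t) } = \tstep^{p + 1/2} \sup_{0 < v \leq 1} \Norm{ \int_0^v \beta_w \, \rd w }$, hence $\Expect \bigl[ \sup_{t \leq \tstep} \norm{ \xi_0(t) }^{r} \bigr] = \tstep^{r(p + 1/2)} \, \Expect \bigl[ \sup_{v \leq 1} \Norm{ \int_0^v \beta_w \, \rd w }^{r} \bigr]$. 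The prefactor $\tstep^{r(p+1/2)}$ is precisely the one in \eqref{eq:scaling_relation_noise_process}, so it remains to bound $\Expect \bigl[ \sup_{v \leq 1} \Norm{ \int_0^v \beta_w \, \rd w }^{r} \bigr]$ by a constant independent of $\tstep$, $p$, and $r$.

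For this I would use the crude pathwise bound $\Norm{ \int_0^v \beta_w \, \rd w } \leq \int_0^v \norm{ \beta_w } \, \rd w \leq \int_0^1 \norm{ \beta_w } \, \rd w$, valid for every $v \in [0,1]$, so that $\sup_{v \leq 1} \Norm{ \int_0^v \beta_w \, \rd w } \leq \int_0^1 \norm{ \beta_w } \, \rd w$ and the supremum disappears. Jensen's inequality (the integral analogue of \eqref{eq:gen_triangle_inequality}) gives $\bigl( \int_0^1 \norm{ \beta_w } \, \rd w \bigr)^{r} \leq \int_0^1 \norm{ \beta_w }^{r} \, \rd w$, and since $\beta_w$ is centred Gaussian with covariance $w I_{n}$ we have $\Expect \bigl[ \norm{ \beta_w }^{r} \bigr] = w^{r/2} \Expect \bigl[ \norm{ Z }^{r} \bigr]$ for $Z$ a standard Gaussian on $\Reals^{n}$. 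Integrating in $w$ yields $\Expect \bigl[ \sup_{v \leq 1} \Norm{ \int_0^v \beta_w \, \rd w }^{r} \bigr] \leq \tfrac{2}{r + 2} \Expect \bigl[ \norm{ Z }^{r} \bigr]$.

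The final, and delicate, point is to control $\tfrac{2}{r+2} \Expect \bigl[ \norm{ Z }^{r} \bigr]$ by $4$ for every $r \in \Naturals$: here I would use the even-moment identity $\Expect \bigl[ \norm{ Z }^{2m} \bigr] = \prod_{j=0}^{m-1} (n + 2j)$ (from $\norm{ Z }^{2} \sim \chi^{2}_{n}$), reduce odd exponents to the next even one via $\Expect \bigl[ \norm{ Z }^{r} \bigr] \leq \Expect \bigl[ \norm{ Z }^{2 \lceil r/2 \rceil} \bigr]^{r / (2 \lceil r/2 \rceil)}$, and then carry out an elementary comparison with $2(r+2)$. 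I expect this last step to be the main obstacle: the bound $\sup_{v} \Norm{ \int_0^v \beta_w \, \rd w } \leq \int_0^1 \norm{ \beta_w } \, \rd w$ is wasteful, since $\int_0^{\cdot} \beta_w \, \rd w$ fluctuates far less than $\beta$ itself, and if the universal constant cannot be extracted from it one should instead use the decomposition $\int_0^v \beta_w \, \rd w = v \beta_v - \int_0^v w \, \rd \beta_w$, apply a Burkholder--Davis--Gundy or Doob $L^{r}$ inequality to the martingale term --- whose quadratic variation on $[0,1]$ is at most $1/3$ --- and combine this with the sub-Gaussian control of $\sup_{v \leq 1} \norm{ \beta_v }$.
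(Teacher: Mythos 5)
Your reduction is correct as far as it goes, and it takes a genuinely different route from the paper's: the paper applies Jensen's inequality to $\tfrac{1}{t}\int_0^t$ so as to bound $\norm{\xi_0(t)}^r \leq \tstep^{r(p-1)} t^{r-1} \int_0^t \norm{B_s}^r \, \rd s$, reduces everything to $\Expect \bigl[ \sup_{t \leq \tstep} \norm{B_t}^r \bigr]$, and then invokes the Burkholder--Davis--Gundy inequality (for $r=1$) and Doob's $L^r$ maximal inequality (for $r>1$); you instead rescale to $[0,1]$ and avoid maximal inequalities altogether via the pathwise bound $\sup_{v \leq 1} \norm{ \int_0^v \beta_w \, \rd w } \leq \int_0^1 \norm{\beta_w} \, \rd w$. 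Everything up to $\Expect \bigl[ \sup_{v \leq 1} \norm{ \int_0^v \beta_w \, \rd w }^r \bigr] \leq \tfrac{2}{r+2} \Expect \bigl[ \norm{Z}^r \bigr]$ is sound, and this constant is in fact sharper than what the Doob route yields once the latter is written out in full, namely $\bigl(\tfrac{r}{r-1}\bigr)^r \Expect \bigl[ \norm{Z}^r \bigr]$.

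The step that fails is the last one: $\tfrac{2}{r+2} \Expect \bigl[ \norm{Z}^r \bigr] \leq 4$ is false, because Gaussian moments grow super-exponentially in $r$ and also with the dimension. For $n=1$ and $r=8$ one gets $\tfrac{2}{10} \Expect \bigl[ \absval{Z}^8 \bigr] = \tfrac{2}{10} \cdot 105 = 21$, and already for $r=2$ the quantity equals $n/2$, which exceeds $4$ once $n \geq 9$. Nor can your fallback (integration by parts, BDG/Doob on the martingale part, sub-Gaussian control of $\sup_v \norm{\beta_v}$) rescue a constant independent of $r$ and $n$: the single Gaussian variable $Y_1 = \int_0^1 \beta_w \, \rd w$ has variance $\tfrac{1}{3}$ per coordinate, so in dimension one $\Expect \bigl[ \absval{Y_1}^{12} \bigr] = 11!!/3^6 \approx 14.3 > 4$, and since $\sup_{v \leq 1} \norm{Y_v} \geq \norm{Y_1}$, your (exact) scaling identity shows that \eqref{eq:scaling_relation_noise_process} with the universal constant $4$ cannot hold for all $r \in \Naturals$. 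The obstruction is therefore not in your method but in the target: the paper's own proof reaches the constant $4$ only because its displayed BDG/Doob bounds replace $\Expect \bigl[ \norm{B_\tstep}^r \bigr]$ by $\tstep^{r/2}$, i.e.\ silently drop exactly the factor $\Expect \bigl[ \norm{Z}^r \bigr]$ that blocks you. Stopping one step earlier, your argument does prove the corrected statement $\Expect \bigl[ \sup_{t \leq \tstep} \norm{\xi_0(t)}^r \bigr] \leq \tfrac{2}{r+2} \Expect \bigl[ \norm{Z}^r \bigr] \tstep^{r(p+1/2)}$, with an $(r,n)$-dependent constant, which is the most any correct proof can deliver; note this also matters downstream, since a constant growing super-geometrically in $r$ affects whether \cref{ass:p-R_regularity_condition_on_noise_continuous_time} can really be verified with $R = +\infty$ and a single $C_{\xi,R}$.
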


Note that variants of the integrated Brownian motion process have been used for modelling local truncation error in other works \citep{Schober:2014,Conrad:2016}.
However, the point of Lemma~\ref{lem:rth_power_of_scaled_integrated_BM} is \textit{not} to suggest that the local truncation error behaves as an integrated Brownian motion, nor even that the integrated Brownian motion process is a suitable model for the local truncation error.
The point of Lemma \ref{lem:rth_power_of_scaled_integrated_BM} is simply to show that there exist processes that satisfy Assumption \ref{ass:p-R_regularity_condition_on_noise_continuous_time} with $R= + \infty$.
The construction of models that better reflect known properties of the truncation error, for specific classes of vector fields $f$, is an interesting task that we leave for future work.

% BibTeX users please use one of
\bibliographystyle{abbrvnat}
\bibliography{references}   % name your BibTeX data base

\begin{thebibliography}{41}
\providecommand{\natexlab}[1]{#1}
\providecommand{\url}[1]{\texttt{#1}}
\expandafter\ifx\csname urlstyle\endcsname\relax
  \providecommand{\doi}[1]{doi: #1}\else
  \providecommand{\doi}{doi: \begingroup \urlstyle{rm}\Url}\fi

\bibitem[Briol et~al.(2015)Briol, Oates, Girolami, and Osborne]{Briol:2015}
F.-X. Briol, C.~Oates, M.~Girolami, and M.~A. Osborne.
\newblock Frank--{Wolfe} {Bayesian} quadrature: Probabilistic integration with
  theoretical guarantees.
\newblock In C.~Cortes, N.~D. Lawrence, D.~D. Lee, M.~Sugiyama, and R.~Garnett,
  editors, \emph{Advances in Neural Information Processing Systems 28}, pages
  1162--1170. Curran Associates, Inc., 2015.

\bibitem[Capistr{\'a}n et~al.(2016)Capistr{\'a}n, Christen, and
  Donnet]{CapistranChristenDonnet:2016}
M.~A. Capistr{\'a}n, J.~A. Christen, and S.~Donnet.
\newblock Bayesian analysis of {ODE}s: solver optimal accuracy and {Bayes}
  factors.
\newblock \emph{SIAM/ASA J. Uncertain. Quantif.}, 4\penalty0 (1):\penalty0
  829--849, 2016.
\newblock \doi{10.1137/140976777}.

\bibitem[Chkrebtii et~al.(2016)Chkrebtii, Campbell, Calderhead, and
  Girolami]{Chkrebtii:2016}
O.~A. Chkrebtii, D.~A. Campbell, B.~Calderhead, and M.~A. Girolami.
\newblock Bayesian solution uncertainty quantification for differential
  equations.
\newblock \emph{Bayesian Anal.}, 11\penalty0 (4):\penalty0 1239--1267, 2016.
\newblock \doi{10.1214/16-BA1017}.

\bibitem[Christen(2017)]{Christen:2017}
J.~A. Christen.
\newblock Posterior distribution existence and error control in {Banach}
  spaces, 2017.
\newblock arXiv:1712.03299.

\bibitem[Cockayne et~al.(2017)Cockayne, Oates, Sullivan, and
  Girolami]{Cockayne:2017a}
J.~Cockayne, C.~Oates, T.~J. Sullivan, and M.~Girolami.
\newblock Probabilistic numerical methods for {PDE}-constrained {Bayesian}
  inverse problems.
\newblock In G.~Verdoolaege, editor, \emph{Proceedings of the
  36\textsuperscript{th} {International} {Workshop} on {Bayesian} {Inference}
  and {Maximum} {Entropy} {Methods} in {Science} and {Engineering}}, volume
  1853 of \emph{AIP Conference Proceedings}, pages 060001--1--060001--8, 2017.
\newblock \doi{10.1063/1.4985359}.

\bibitem[Cockayne et~al.(SIAM Rev., to appear)Cockayne, Oates, Sullivan, and
  Girolami]{Cockayne:2017b}
J.~Cockayne, C.~Oates, T.~J. Sullivan, and M.~Girolami.
\newblock Bayesian probabilistic numerical methods, SIAM Rev., to appear.
\newblock arXiv:1702.03673.

\bibitem[Conrad et~al.(2017)Conrad, Girolami, S{\"a}rkk{\"a}, Stuart, and
  Zygalakis]{Conrad:2016}
P.~R. Conrad, M.~Girolami, S.~S{\"a}rkk{\"a}, A.~M. Stuart, and K.~C.
  Zygalakis.
\newblock Statistical analysis of differential equations: introducing
  probability measures on numerical solutions.
\newblock \emph{Stat. Comput.}, 27\penalty0 (4):\penalty0 1065--1082, 2017.
\newblock ISSN 0960-3174.
\newblock \doi{10.1007/s11222-016-9671-0}.

\bibitem[Diaconis(1988)]{Diaconis:1988}
P.~Diaconis.
\newblock Bayesian numerical analysis.
\newblock In \emph{Statistical {Decision} {Theory} and {Related} {Topics},
  {IV}, {Vol.}\ 1 ({West} {Lafayette}, {Ind.}, 1986)}, pages 163--175.
  Springer, New York, 1988.

\bibitem[Fang and Giles(2016)]{Fang:2016}
W.~Fang and M.~B. Giles.
\newblock Adaptive {Euler}--{Maruyama} method for {SDE}s with non-globally
  {Lipschitz} drift: {Part} {I}, finite time interval, 2016.
\newblock arXiv:1609.08101.

\bibitem[Giles(2015)]{Giles:2015}
M.~B. Giles.
\newblock Multilevel {Monte} {Carlo} methods.
\newblock \emph{Acta Numer.}, 24:\penalty0 259--328, 2015.
\newblock \doi{10.1017/S096249291500001X}.

\bibitem[Gonzalez et~al.(2016)Gonzalez, Osborne, and Lawrence]{Gonzalez:2016}
J.~Gonzalez, M.~Osborne, and N.~Lawrence.
\newblock {GLASSES}: Relieving the myopia of {Bayesian} optimisation.
\newblock In \emph{Proceedings of the 19th International Conference on
  Artificial Intelligence and Statistics}, pages 790--799. 2016.
\newblock \url{http://jmlr.org/proceedings/papers/v51/gonzalez16b.html}.

\bibitem[Hairer et~al.(2009)Hairer, N{\o}rsett, and
  Wanner]{HairerNorsettWanner:2009}
E.~Hairer, S.~N{\o}rsett, and G.~Wanner.
\newblock \emph{{Solving Ordinary Differential Equations I: Nonstiff
  Problems}}, volume~8 of \emph{Springer Series in Computational Mathematics}.
\newblock Springer-Verlag, New York, 2009.
\newblock \doi{10.1007/978-3-540-78862-1}.
\newblock First softcover printing.

\bibitem[Hennig(2015)]{Hennig:2015}
P.~Hennig.
\newblock Probabilistic interpretation of linear solvers.
\newblock \emph{SIAM J. Optim.}, 25\penalty0 (1):\penalty0 234--260, 2015.
\newblock \doi{10.1137/140955501}.

\bibitem[Hennig et~al.(2015)Hennig, Osborne, and
  Girolami]{HennigOsborneGirolami:2015}
P.~Hennig, M.~A. Osborne, and M.~Girolami.
\newblock Probabilistic numerics and uncertainty in computations.
\newblock \emph{P. Roy. Soc. Lond. A Mat.}, 471\penalty0 (2179):\penalty0
  20150142, 2015.
\newblock \doi{10.1098/rspa.2015.0142}.

\bibitem[Higham et~al.(2002)Higham, Mao, and Stuart]{HighamStuartMao:2002}
D.~J. Higham, X.~Mao, and A.~M. Stuart.
\newblock Strong convergence of {Euler}-type methods for nonlinear stochastic
  differential equations.
\newblock \emph{SIAM J. Numer. Anal.}, 40\penalty0 (3):\penalty0 1041--1063,
  2002.
\newblock \doi{10.1137/S0036142901389530}.

\bibitem[Holte(2009)]{Holte:2009}
J.~M. Holte.
\newblock Discrete {Gronwall} lemma and applications, 2009.
\newblock \url{http://homepages.gac.edu/~holte/publications/GronwallLemma.pdf}.

\bibitem[Humphries and Stuart(1994)]{HumphriesStuart:1994}
A.~R. Humphries and A.~M. Stuart.
\newblock Runge--{Kutta} methods for dissipative and gradient dynamical
  systems.
\newblock \emph{SIAM J. Numer. Anal.}, 31\penalty0 (5):\penalty0 1452--1485,
  1994.
\newblock \doi{10.1137/0731075}.

\bibitem[Jentzen and Neuenkirch(2009)]{JentzenNeuenkirch:2009}
A.~Jentzen and A.~Neuenkirch.
\newblock A random {Euler} scheme for {Carath\'eodory} differential equations.
\newblock \emph{J. Comput. Appl. Math.}, 224\penalty0 (1):\penalty0 346--359,
  2009.
\newblock \doi{10.1016/j.cam.2008.05.060}.

\bibitem[Kaipio and Somersalo(2005)]{KaipioSomersalo:2005}
J.~Kaipio and E.~Somersalo.
\newblock \emph{Statistical and {Computational} {Inverse} {Problems}}, volume
  160 of \emph{Applied Mathematical Sciences}.
\newblock Springer-Verlag, New York, 2005.
\newblock \doi{10.1007/b138659}.

\bibitem[Knapik et~al.(2011)Knapik, van~der Vaart, and van Zanten]{Knapik:2011}
B.~T. Knapik, A.~W. van~der Vaart, and J.~H. van Zanten.
\newblock Bayesian inverse problems with {Gaussian} priors.
\newblock \emph{Ann. Statist.}, 39\penalty0 (5):\penalty0 2626--2657, 2011.
\newblock ISSN 0090-5364.
\newblock \doi{10.1214/11-AOS920}.

\bibitem[Kruse and Wu(2017)]{KruseWu:2017}
R.~Kruse and Y.~Wu.
\newblock Error analysis of randomized {Runge}--{Kutta} methods for
  differential equations with time-irregular coefficients.
\newblock \emph{Comput. Methods Appl. Math.}, 17\penalty0 (3):\penalty0
  479--498, 2017.
\newblock ISSN 1609-4840.
\newblock \doi{10.1515/cmam-2016-0048}.

\bibitem[Law et~al.(2015)Law, Stuart, and Zygalakis]{LawStuartZygalakis:2015}
K.~Law, A.~Stuart, and K.~Zygalakis.
\newblock \emph{Data {Assimilation}: {A} {Mathematical} {Introduction}},
  volume~62 of \emph{Texts in Applied Mathematics}.
\newblock Springer, 2015.
\newblock \doi{10.1007/978-3-319-20325-6}.

\bibitem[Lie et~al.(2018)Lie, Sullivan, and
  Teckentrup]{LieSullivanTeckentrup:2018}
H.~C. Lie, T.~J. Sullivan, and A.~L. Teckentrup.
\newblock Random forward models and log-likelihoods in {Bayesian} inverse
  problems.
\newblock \emph{SIAM/ASA J. Uncertain. Quantif.}, 6\penalty0 (4):\penalty0
  1600--1629, 2018.
\newblock \doi{10.1137/18M1166523}.

\bibitem[Mao and Szpruch(2013)]{Mao:2013}
X.~Mao and L.~Szpruch.
\newblock Strong convergence and stability of implicit numerical methods for
  stochastic differential equations with non-globally {Lipschitz} continuous
  coefficients.
\newblock \emph{J. Comp. Appl. Math.}, 238:\penalty0 14 -- 28, 2013.
\newblock \doi{10.1016/j.cam.2012.08.015}.

\bibitem[Novak(1988)]{Novak:1988}
E.~Novak.
\newblock \emph{Deterministic and {Stochastic} {Error} {Bounds} in {Numerical}
  {Analysis}}, volume 1349 of \emph{Lecture Notes in Mathematics}.
\newblock Springer-Verlag, Berlin, 1988.
\newblock \doi{10.1007/BFb0079792}.

\bibitem[O'Hagan(1992)]{OHagan:1992}
A.~O'Hagan.
\newblock Some {Bayesian} numerical analysis.
\newblock In J.~M. Bernardo, J.~O. Berger, A.~P. Dawid, and A.~F.~M. Smith,
  editors, \emph{Bayesian Statistics, 4: Proceedings of the Fourth Valencia
  International Meeting: Dedicated to the memory of Morris H. DeGroot,
  1931--1989: April 15--20, 1991}, pages 345--363. Clarendon Press, 1992.

\bibitem[Owhadi(2015)]{Owhadi:2015}
H.~Owhadi.
\newblock Bayesian numerical homogenization.
\newblock \emph{Multiscale Model. Simul.}, 13\penalty0 (3):\penalty0 812--828,
  2015.
\newblock \doi{10.1137/140974596}.

\bibitem[Owhadi(2017)]{Owhadi:2017}
H.~Owhadi.
\newblock Multigrid with rough coefficients and multiresolution operator
  decomposition from hierarchical information games.
\newblock \emph{SIAM Rev.}, 59\penalty0 (1):\penalty0 99--149, 2017.
\newblock \doi{10.1137/15M1013894}.

\bibitem[Pe{\v{s}}kir(1996)]{Peskir:1996}
G.~Pe{\v{s}}kir.
\newblock On the exponential {Orlicz} norms of stopped {Brownian} motion.
\newblock \emph{Studia Math.}, 117\penalty0 (3):\penalty0 253--273, 1996.
\newblock \doi{10.4064/sm-117-3-253-273}.

\bibitem[Reich and Cotter(2015)]{ReichCotter:2015}
S.~Reich and C.~Cotter.
\newblock \emph{Probabilistic {Forecasting} and {Bayesian} {Data}
  {Assimilation}}.
\newblock Cambridge University Press, New York, 2015.
\newblock \doi{10.1017/CBO9781107706804}.

\bibitem[Ritter(2000)]{Ritter:2000}
K.~Ritter.
\newblock \emph{Average-{Case} {Analysis} of {Numerical} {Problems}}, volume
  1733 of \emph{Lecture Notes in Mathematics}.
\newblock Springer-Verlag, Berlin, 2000.
\newblock \doi{10.1007/BFb0103934}.

\bibitem[Schober et~al.(2014)Schober, Duvenaud, and Hennig]{Schober:2014}
M.~Schober, D.~K. Duvenaud, and P.~Hennig.
\newblock Probabilistic {ODE} solvers with {Runge}--{Kutta} means.
\newblock In Z.~Ghahramani, M.~Welling, C.~Cortes, N.~D. Lawrence, and K.~Q.
  Weinberger, editors, \emph{Advances in Neural Information Processing Systems
  27}, pages 739--747. Curran Associates, Inc., 2014.

\bibitem[Skilling(1992)]{Skilling:1992}
J.~Skilling.
\newblock Bayesian solution of ordinary differential equations.
\newblock In C.~R. Smith, G.~J. Erickson, and P.~O. Neudorfer, editors,
  \emph{Maximum Entropy and Bayesian Methods}, volume~50 of \emph{Fundamental
  Theories of Physics}, pages 23--37. Springer, 1992.
\newblock \doi{10.1007/978-94-017-2219-3}.

\bibitem[Smith(2014)]{Smith:2014}
R.~C. Smith.
\newblock \emph{Uncertainty {Quantification}: {Theory}, {Implementation}, and
  {Applications}}, volume~12 of \emph{Computational Science \& Engineering}.
\newblock Society for Industrial and Applied Mathematics (SIAM), Philadelphia,
  PA, 2014.

\bibitem[Stengle(1990)]{Stengle:1990}
G.~Stengle.
\newblock Numerical methods for systems with measurable coefficients.
\newblock \emph{Appl. Math. Lett.}, 3\penalty0 (4):\penalty0 25--29, 1990.
\newblock \doi{10.1016/0893-9659(90)90040-I}.

\bibitem[Stuart(2010)]{Stuart:2010}
A.~M. Stuart.
\newblock Inverse problems: a {Bayesian} perspective.
\newblock \emph{Acta Numer.}, 19:\penalty0 451--559, 2010.
\newblock \doi{10.1017/S0962492910000061}.

\bibitem[Sullivan(2015)]{Sullivan:2015}
T.~J. Sullivan.
\newblock \emph{Introduction to {Uncertainty} {Quantification}}, volume~63 of
  \emph{Texts in Applied Mathematics}.
\newblock Springer, 2015.
\newblock \doi{10.1007/978-3-319-23395-6}.

\bibitem[Teymur et~al.(2018)Teymur, Lie, Sullivan, and Calderhead]{Teymur:2018}
O.~Teymur, H.~C. Lie, T.~J. Sullivan, and B.~Calderhead.
\newblock Implicit probabilistic integrators for {ODE}s.
\newblock In S.~Bengio, H.~Wallach, H.~Larochelle, K.~Grauman, N.~Cesa-Bianchi,
  and R.~Garnett, editors, \emph{Advances in Neural Information Processing
  Systems 31 (NIPS 2018)}. 2018.
\newblock URL
  \url{http://papers.nips.cc/paper/7955-implicit-probabilistic-integrators-for-odes}.

\bibitem[Traub and Wo\'zniakowsi(1980)]{TraubWozniakowski:1980}
J.~F. Traub and H.~Wo\'zniakowsi.
\newblock \emph{A {General} {Theory} of {Optimal} {Algorithms}}.
\newblock ACM Monograph Series. Academic Press, Inc. [Harcourt Brace
  Jovanovich, Publishers], New York-London, 1980.

\bibitem[Traub et~al.(1983)Traub, Wasilkowski, and
  Wo\'zniakowski]{TraubWasilkowskiWozniakowski:1983}
J.~F. Traub, G.~W. Wasilkowski, and H.~Wo\'zniakowski.
\newblock \emph{Information, {Uncertainty}, {Complexity}}.
\newblock Addison-Wesley Publishing Company, Advanced Book Program, Reading,
  MA, 1983.

\bibitem[Wang et~al.(2018)Wang, Cockayne, and Oates]{Wang:2018}
J.~Wang, J.~Cockayne, and C.~Oates.
\newblock On the {Bayesian} solution of differential equations, 2018.
\newblock arXiv:1805.07109.

\end{thebibliography}

\appendix
\section{Proofs}
\label{sec:appendix}

\begin{proof}[Proof of Lemma \ref{lem:gen_peter_paul_inequality}]
	The assertion \eqref{eq:gen_peter_paul_inequality} holds immediately for $n=1$, so let $n\in\Naturals\setminus\{1\}$, and recall the binomial formula: for $x,y\in\Reals$ and $n\in \Naturals\setminus\{1\}$,
	\begin{equation*}
		(x + y)^n = \sum^n_{k=0} \binom{ n }{ k } x^{k} y^{n - k} = x^n + y^n + \sum^{n-1}_{k=1} \binom{ n }{ k } x^{k} y^{n-k}.
	\end{equation*}
	Fix $\delta>0$.
	By \eqref{eq:Young}, for any $1\leq k\leq n-1$,
	\begin{align*}
		x^{k} y^{n-k} &\leq \delta \frac{k}{n}x^{n} + \frac{1}{\delta^{k/(n-k)}}\frac{n-k}{n}y^n
		\\
		&\leq \delta \frac{k}{n}x^{n} + \frac{1}{\delta^{n-1}}\frac{n-k}{n}y^n,
	\end{align*}
	where the second inequality follows from $-\tfrac{k}{n-k}\geq -(n-1)$.
	Therefore,
	\begin{align}
		(x + y)^n  \leq& x^n \left( 1 + \delta\sum^{n-1}_{k=1}\binom{ n }{ k }\frac{k}{n} \right) 
		\\
		&+ y^n \left( 1 + \frac{1}{\delta^{n-1}}\sum^{n-1}_{k=1}\binom{ n }{ k }\frac{n-k}{n} \right),
		\notag
	\end{align}
	and the proof is complete upon observing that
	\begin{align*}
		\sum^{n-1}_{k=1}\binom{ n }{ k }\frac{k}{n}&=\sum^{n-1}_{k=1} \binom{ n-1 }{ k-1 } = \sum^{n-1}_{j=0} \binom{ n - 1 }{ j } - \binom{ n - 1 }{ n - 1 } \\
		&= (1 + 1)^{n-1} -1 \leq 2^{n-1}
	\end{align*}
	and bounding the other binomial sum in a similar way.\qed
\end{proof}

\begin{proof}[Proof of Theorem \ref{thm:strong_error_sup_inside}]
	By \eqref{eq:strong_error_e},
	\begin{align*}
		\norm{ e_{k + 1} }^{2}
		 =& \Norm{ \bigl( \odeflow^{\tstep}(u_{k}) - \odeflow^{\tstep}(U_{k}) \bigr) - \bigl( \numflow^{\tstep}(U_{k}) - \odeflow^{\tstep}(U_{k}) \bigr) }^{2} 
		\\
		&+ \norm{ \xi_{k}(\tstep) }^{2} + 2 \innerprod{ \odeflow^{\tstep}(u_{k}) - \numflow^{\tstep}(U_{k}) }{ \xi_{k}(\tstep) } .
	\end{align*}
	By \eqref{eq:conseq_of_Young} with $\delta = \tstep$, by Assumption \ref{ass:exact_flow} and Assumption \ref{ass:numerical_flow}, and using that $\tstep<\tstep^\ast\leq 1$,
	\begin{align}
		&\Norm{\odeflow^{\tstep}(u_{k})-\numflow^{\tstep}(U_{k})}^2\notag
		\\
		& = \Norm{ \bigl( \odeflow^{\tstep}(u_{k}) - \odeflow^{\tstep}(U_{k}) \bigr) - \bigl( \numflow^{\tstep}(U_{k}) - \odeflow^{\tstep}(U_{k}) \bigr) }^{2}
		\notag\\
		& \leq (1 + \tstep) \norm{ \odeflow^{\tstep}(u_{k}) - \odeflow^{\tstep}(U_{k}) }^{2} \
		\notag\\
		&\quad + (1 + \tstep^{-1}) \norm{ \numflow^{\tstep}(U_{k}) - \odeflow^{\tstep}(U_{k}) }^{2}
		\notag\\
		& \leq (1 + \tstep) (1 + C_\odeflow \tstep)^{2} \norm{ e_{k} }^{2} + 2C_\numflow^2 \tstep^{1 + 2 q}.
		\label{eq:estimate01}
	\end{align}
	Observe that $[(1 + \tstep)(1 + C_\odeflow\tstep)^2-1]\tstep^{-1}$ equals a quadratic polynomial in $\tstep$ with coefficients $a_0$, $a_1$, and $a_2$.
	Calculating these coefficients and defining
	\begin{equation}
		\label{eq:C_1}
		C_1=C_1(C_\odeflow,\tstep^\ast)\defeq 1 + 2C_\odeflow + C_\odeflow(2 + C_\odeflow)\tstep^\ast + C_\odeflow^2(\tstep^\ast)^2
	\end{equation}
	then yields that $[(1 + \tstep)(1 + C_\odeflow\tstep)^2-1]\tstep^{-1}\leq C_1$ for all $0<\tstep<\tstep^\ast$.
	
	Combining the preceding estimates yields
	\begin{align}
		\label{eq:strong_error_e_3}
		\norm{ e_{k + 1} }^{2} - \norm{ e_{k} }^{2} \leq& C_1 \tstep \norm{ e_{k} }^{2} + 2C_\numflow^2 \tstep^{1 + 2 q} + \norm{ \xi_{k}(\tstep) }^{2} 
		\\
		&+  2 \innerprod{ \odeflow^{\tstep}(u_{k}) - \numflow^{\tstep}(U_{k}) }{ \xi_{k}(\tstep) }.
		\notag
	\end{align}
	Using \eqref{eq:strong_error_e_3} in the telescoping sum
	\[
		\norm{ e_{k} }^{2} - \norm{ e_{0} }^{2} = \sum_{j = 0}^{k - 1} \bigl( \norm{ e_{j + 1} }^{2} - \norm{ e_{j} }^{2} \bigr),
	\]
	the fact that $e_0=u_0-U_0=0$ and $K=T/\tstep$, we obtain
	\begin{align*}
		\norm{e_{k}}^2&\leq \sum^{k-1}_{j=0}\biggr[C_1\tstep \norm{e_j}^2 + C_\numflow\tstep^{1 + 2q} + \norm{\xi_j(\tstep)}^2 
		\notag\\
		&\quad\quad\quad+ 2\Innerprod{\odeflow^\tstep(u_k)-\numflow^\tstep(U_k)}{\xi_k(\tstep)}\biggr]
		\\
		&\leq C_1\tstep\sum^{k-1}_{j=0} \norm{e_j}^2 + \sum^{K-1}_{j=0}\left(2C_\numflow^2\tstep^{1 + 2q} + \norm{\xi_j(\tstep)}^2\right) 
		\\
		&\quad+ 2\Norm{\sum^{k-1}_{j=0}\Innerprod{\odeflow^\tstep(u_k)-\numflow^\tstep(U_k)}{\xi_k(\tstep)}}
		\\
		&\leq C_1\tstep\sum^{k-1}_{j=0} \norm{e_j}^2 + 2TC_\numflow^2\tstep^{2q} + \sum^{K-1}_{j=0}\norm{\xi_j(\tstep)}^2 
		\\
		&\quad+ 2\Norm{\sum^{k-1}_{j=0}\Innerprod{\odeflow^\tstep(u_k)-\numflow^\tstep(U_k)}{\xi_k(\tstep)}}.
	\end{align*}
	It follows from the last inequality that
	\begin{align*}
	 	\max_{\ell\leq k}\norm{e_{\ell}}^2&\leq C_1\tstep\sum^{k-1}_{j=0} \norm{e_j}^2 + 2TC_\numflow^2\tstep^{2q} + \sum^{K-1}_{j=0}\norm{\xi_j(\tstep)}^2
	 	\\
	 	&\quad+ 2\max_{\ell\leq k}\Norm{\sum^{\ell-1}_{j=0}\Innerprod{\odeflow^\tstep(u_k)-\numflow^\tstep(U_k)}{\xi_k(\tstep)}}.
% 	 \\
% 	 &\leq C_1\tstep\sum^{k-1}_{j=0} \max_{\ell\leq j}\norm{e_\ell }^2 + TC_\numflow\tstep^{2q} + \sum^{K-1}_{j=0}\norm{\xi_j(\tstep)}^2 + 2\max_{\ell\leq k-1}\Norm{\sum^{\ell}_{j=0}\Innerprod{\odeflow^\tstep(u_k)-\numflow^\tstep(U_k)}{\xi_k(\tstep)}}.
	\end{align*}
	Now replace $\norm{e_j}^2$ on the right-hand side with $\max_{\ell\leq j}\norm{e_\ell}^2$ and take expectations of both sides of the inequality.
	Since Assumption \ref{ass:p-R_regularity_condition_on_noise} holds with $R=2$,
	\begin{equation*}
		\Expect \left[ \sum_{j = 0}^{K - 1}\norm{ \xi_{j}(\tstep) }^{2} \right]\leq \frac{T}{\tstep} (C_{\xi,R}\tstep^{p + 1/2})^2=T C_{\xi,R}^2\tstep^{2p}.
	\end{equation*}
	Next, define for every $k\in[K]$ the $\sigma$-algebra $\mathcal{F}_{j}$ generated by $\xi_{0}(\tstep),\ldots,\xi_{j}(\tstep)$
	Then the sequence $(\mathcal{F}_{j})_{j\in[K]}$ forms a filtration.
	Define $(M_k)_{k\in[K]}$ by
	\begin{equation*}
		M_k\defeq \sum^k_{j=0}\Innerprod{\odeflow^\tstep(u_j)-\numflow^\tstep(U_j)}{\xi_j(\tstep)}.
	\end{equation*}
	We want to show that this process is a martingale with respect to $(\mathcal{F}_{j})_{j\in[K]}$.
	By \eqref{eq:randomised_numerical_scheme_0}, $U_j$ is measurable with respect to $\mathcal{F}_{j-1}$, so $M_k$ is measurable with respect to $\mathcal{F}_k$.
	Hence $(M_k)_{k\in[K]}$ is adapted with respect to $(\mathcal{F}_k)_{k\in[K]}$.
	Observe that
	\begin{align*}
		\Expect\left[\norm{M_k}\right]&\leq \sum^k_{j=0}\Expect\left[ \Norm{\Innerprod{\odeflow^\tstep(u_j)-\numflow^\tstep(U_j)}{\xi_j(\tstep)}}\right]
		\\
		&
		\leq \sum^k_{j=0}\left(\Expect\left[\norm{\odeflow^\tstep(u_j)-\numflow^\tstep(U_j)}^2\right] + \Expect\left[\norm{\xi_j(\tstep)}^2\right]\right)
		\\
		&\leq 2\sum^k_{j=0}\left(\norm{\odeflow^\tstep(u_j)}^2 + \Expect\left[\norm{\numflow^\tstep(U_j)}^2+\norm{\xi_j(\tstep)}^2\right]\right).
	\end{align*}
	Using the assumption that $X\in L^2_{\mathbb{P}} \implies \numflow^\tstep (X)\in L^2_{\mathbb{P}}$, \eqref{eq:randomised_numerical_scheme_0}, Assumption \ref{ass:p-R_regularity_condition_on_noise}, and the fact that $U_0=u_0$ is fixed, it follows that $U_j$ and $\numflow^\tstep(U_j)$ belong to $L^2_{\mathbb{P}}$; thus $M_k$ belongs to $L^1_{\mathbb{P}}$ for every $k\in[K]$.
	We now use the assumption that $\Expect[\xi_j(\tstep)]=0$ for every $j\in[K]$, and that the $(\xi_k(\tstep))_{k\in[K]}$ are mutually independent, in order to establish the martingale property:
	\begin{align*}
		\Expect&\left[M_k-M_{k-1}\middle\vert\mathcal{F}_{k-1}\right] 
		\\
		&= \Expect\left[\Innerprod{\odeflow^\tstep(u_k) - \numflow^\tstep(U_k)}{\xi_k(\tstep)}\middle\vert\mathcal{F}_{k-1}\right],
	\end{align*}
	and the right-hand side vanishes since $U_k$ is measurable with respect to $\mathcal{F}_{k-1}$ as noted earlier.
	Since $(M_k)_{k\in[K]}$ is a martingale, we may apply the Burkholder--Davis--Gundy inequality \citep[Equation (2.2)]{Peskir:1996}.
	Letting $[Y]_{\ell}$ denote the quadratic variation up to time $\ell$ of a process $Y_{k}$, we have
	\begin{align*}
		& \Expect \left[ \max_{k \leq \ell} \Norm{ \sum_{j = 0}^{k - 1} \innerprod{ \odeflow^{\tstep} (u_{j}) - \numflow^{\tstep} (U_{j}) }{ \xi_{j}(\tstep) } } \right]
		\\
		& \quad \leq 3\Expect \Bigl[ [ \innerprod{ \odeflow^{\tstep} (u_{\bullet}) - \numflow^{\tstep} (U_{\bullet}) }{ \xi_{\bullet}(\tstep) } ]_{\ell - 1}^{1/2} \Bigr]\leq 3 \Expect \left[  ab \right]
	\end{align*}
	where we define $b\defeq\sqrt{ \sum_{j = 1}^{\ell - 1} \norm{ \xi_{j}(\tstep) - \xi_{j - 1}(\tstep) }^{2} }$ and $a\defeq\sqrt{\max_{j \leq \ell} \norm{ \odeflow^{\tstep} (u_{j}) - \numflow^{\tstep} (U_{j}) }^2}$. Using \eqref{eq:Young} with the same $a$ and $b$, $r=r^\ast=2$, and $\delta=[6(1 + \tstep)(1 + C_\odeflow\tstep)^2]^{-1}$, and using \eqref{eq:estimate01}, it follows that
	\begin{align*}
		& 3 \Expect \Biggl[\left( \max_{j \leq \ell} \norm{ \odeflow^{\tstep} (u_{j}) - \numflow^{\tstep} (U_{j}) }\right) \sqrt{ \sum_{j = 1}^{\ell - 1} \norm{ \xi_{j}(\tstep) - \xi_{j - 1}(\tstep) }^{2} } \Biggr]
		\\
		& \quad \leq \frac{1}{4}\left(\Expect\left[ \max_{j\leq \ell} \norm{ e_j }^2\right] + 2C_\numflow^2\tau^{1 + 2q}\right)\\
		&\quad \quad+ 9(1 + \tstep^\ast)(1 + C_\odeflow\tstep^\ast)^2\sum^{\ell-1}_{j=1} \Expect\left[\norm{ \xi_{j}(\tstep)-\xi_{j-1}(\tstep) }^2\right]
		\\
		&\quad \leq \frac{1}{4}\left(\Expect\left[ \max_{j\leq \ell} \norm{ e_j }^2\right] + 2C_\numflow^2\tau^{1 + 2q}\right)\\
		&\quad\quad +18(1 + \tstep^\ast)(1 + C_\odeflow\tstep^\ast)^2\sum^{\ell-1}_{j=1} \Expect\left[\norm{ \xi_{j}(\tstep) }^2\right]
	\end{align*}
	where we applied \eqref{eq:conseq_of_Young} with $\delta=1$, $r=r^\ast=2$, $a=\xi_{j}(\tstep)$ and $b=\xi_{j-1}(\tstep)$ to obtain the last inequality.
	Thus by Assumption \ref{ass:p-R_regularity_condition_on_noise} and by using $\ell-1\leq K=T/\tstep$,
	\begin{align*}
		2 \Expect&\left[ \max_{k \leq \ell} \Norm{ \sum_{j = 0}^{\ell - 1} \innerprod{ \odeflow^{\tstep} (u_{j}) - \numflow^{\tstep} (U_{j}) }{ \xi_{j}(\tstep) } } \right]
		\\
		&\leq \frac{1}{2}\left( \Expect \left[ \max_{k\leq \ell} \norm{e_{k}}^2 \right] + 2C_\numflow^2 \tstep^{1 + 2q}\right)
		\\
		&\quad+ 36(1 + \tstep^\ast)(1 + C_\odeflow\tstep^\ast)^2\sum^{\ell-1}_{j=0} \Expect \left[ \norm{\xi_{j}(\tstep)}^2 \right]
		\\
		&\leq \frac{1}{2}\left( \Expect \left[ \max_{k\leq \ell} \norm{e_{k}}^2 \right] + 2C_\numflow^2 \tstep^{1 + 2q}\right) 
		\\
		&\quad+ 36(1 + \tstep^\ast)(1 + C_\odeflow\tstep^\ast)^2T\tstep^{-1} \left(C_{\xi,R}\tstep^{p + 1/2}\right)^2
	\end{align*}
	Combining the preceding estimates, we obtain
	\begin{align*}
		\Expect& \biggl[ \max_{\ell \leq k} \norm{ e_{\ell} }^{2} \biggr]
		\\
		&\leq \tstep C_1 \sum_{j = 0}^{k - 1} \Expect \biggl[ \max_{\ell \leq j} \norm{ e_{\ell} }^{2} \biggr] + 2TC_\numflow^2\tstep^{2q} + TC_{\xi,R}^2\tstep^{2p}
		\\
		&\quad + \frac{1}{2}\left( \Expect \left[ \max_{k\leq \ell} \norm{e_{k}}^2 \right] + 2C_\numflow^2 \tstep^{1 + 2q}\right) 
		\\
		&\quad+ 36(1 + \tstep^\ast)(1 + C_\odeflow\tstep^\ast)^2T\tstep^{-1} \left(C_{\xi,R}\tstep^{p + 1/2}\right)^2,
	\end{align*}
	and by rearranging terms and using that $\tstep<\tstep^\ast\leq 1$, we obtain
	\begin{align*}
		\Expect& \biggl[ \max_{\ell \leq k} \norm{ e_{\ell} }^{2} \biggr]
		\\
		&\leq 2\tstep C_1 \sum_{j = 0}^{k - 1} \Expect \biggl[ \max_{\ell \leq j} \norm{ e_{\ell} }^{2} \biggr]+ 4(1 + T)C_\numflow^2\tstep^{2q} \\
		&\quad  + 2TC_{\xi,R}^2(1 + 36(1 + \tstep^\ast)(1 + C_\odeflow\tstep^\ast)^2)\tstep^{2p}.
	\end{align*}
	By the discrete Gr\"onwall inequality (Theorem \ref{thm:discrete_Gronwall}) with $x_k\defeq \Expect [ \max_{\ell \leq k} \norm{ e_{\ell} }^{2} ]$ and constant $\alpha_k$ and $\beta_j=2\tstep C_1$, and by using that $K=T/\tstep$, we obtain
	\begin{align*}
		\Expect &\biggl[ \max_{\ell\in[K]} \norm{ e_{\ell} }^{2} \biggr]
		\\
		&\leq\exp( 2 T C_1 )\left[4(1 + T)C_\numflow^2\tstep^{2q}\right. \\
		&\hskip15ex \left.+ 2TC_{\xi,R}^2(1 + 36(1 + \tstep^\ast)(1 + C_\odeflow\tstep^\ast)^2)\tstep^{2p}\right].
	\end{align*}
	This establishes \eqref{eq:strong_error_sup_inside_discrete}.\qed
\end{proof}

\begin{proof}[Proof of Theorem \ref{thm:convergence_without_iid_mean_zero_noise}]
	Let $0\leq k\leq K-1$ and $n \in \Naturals$.
	By applying the triangle inequality, \eqref{eq:gen_peter_paul_inequality}, Assumptions \ref{ass:exact_flow} and \ref{ass:numerical_flow}, and by using that $1 + \tstep 2^{n-1}\leq 1 + 2^{n-1}$ (since $\tstep\leq 1$),
	\begin{align}
		&\norm{e_{k + 1}}^n
		\notag\\
		&\leq \left(\norm{\odeflow^\tstep(u_k)-\numflow^\tstep(U_k)} + \norm{\xi_k(\tstep)}\right)^n
		\notag\\
		&\leq (1 + \tstep 2^{n-1})\norm{\odeflow^\tstep(u_k)-\numflow^\tstep(U_k)}^n \notag\\
		&\quad+ (1 + (2/\tstep)^{n-1})\norm{\xi_k(\tstep)}^n
		\notag\\
		&\leq (1 + \tstep 2^{n-1})\left( (1 + \tstep 2^{n-1}) \norm{\odeflow^\tstep(u_k)-\odeflow^\tstep(U_k)}^n\right. \notag\\
		&\quad+\left. (1 + (2/\tstep)^{n-1})\norm{\odeflow^\tstep(U_k)-\numflow^\tstep(U_k)}^n\right)\notag\\
		&\quad + (1 + (2/\tstep)^{n-1})\norm{\xi_k(\tstep)}^n\notag
		\\
		&\leq (1 + \tstep 2^{n-1})^2(1 + \tstep C_\odeflow)^n\norm{e_k}^n \notag\\
		&\quad+ \left(1 + (2/\tstep)^{n-1}\right)\left((1 + 2^{n-1})C_\numflow^n\tstep^{n(q + 1)} + \norm{\xi_k(\tstep)}^n\right).
		\notag
	\end{align}
	Observe that, since $2^{n-1}$ and $C_\odeflow$ are nonnegative, and since $0<\tau<\tau^\ast$,
	\begin{equation}
		\label{eq:C_odeflow_n}
		C_\odeflow(n,\tstep)\defeq \left[(1 + \tstep 2^{n-1})^2(1 + \tstep C_\odeflow)^n-1\right]\tstep^{-1}.
	\end{equation}
	Note that $C_\odeflow(n,\tstep)\leq C_\odeflow(n,\tstep^\ast)$. 
	
	Since $n\geq 1$ implies that $1 + (2/\tstep)^{n-1}\leq 2^n\tstep^{1-n}$, we have
	\begin{align}
		&\norm{e_{k + 1}}^n-\norm{e_{k}}^n 
		\notag\\
		&\leq C_\odeflow(m,\tstep^\ast)\tstep\norm{e_{k}}^{m} + \tstep^{1-n}(1 + 2^{n-1})^2C^n_\numflow \tstep^{n(q + 1)} 
		\notag
		\\
		&\quad+ \tstep^{1-n}(1 + 2^{n-1})\norm{\xi_k(\tstep)}^n
		\notag\\
		&\leq C_\odeflow(m,\tstep^\ast)\tstep\norm{e_k}^n + \tstep^{1-n}(4 C_\numflow\tstep^{q + 1})^n 
		\notag\\
		&\quad+ \tstep^{1-n}(2\norm{\xi_k(\tstep)})^n.
		\notag
	\end{align}
	Decomposing $\norm{e_{k + 1}}^n-\norm{e_0}^n$ as a telescoping sum, using that $e_0=u_0-U_0=0$, using the nonnegativity of the summands on the right-hand side of the last inequality, and using the relation $\norm{e_\ell}^n\leq \max_{j\leq \ell}\norm{e_j}^n$, we obtain
	\begin{align}
		\max_{\ell\leq k + 1}\norm{e_\ell}^n&\leq \left(\tstep^{1-n}\sum^{K-1}_{k=0}\left((4 C_\numflow\tstep^{q + 1})^n + \left(2\norm{\xi_k(\tstep)}\right)^n\right)\right) 
		\notag
		\\
		&\quad+ C_\odeflow(n,\tstep^\ast)\tstep\sum^k_{\ell=0}\max_{j\leq \ell}\norm{e_j}^n.
		\notag
	\end{align}
	Using that $K=T\tstep$ and Gr\"{o}nwall's inequality (Theorem \ref{thm:discrete_Gronwall}),
	\begin{align}
		\max_{\ell\in[K]}&\norm{e_\ell}^n
		\notag\\
		&\leq (4 C_\numflow\tstep^{q})^n T\exp\left(TC_\odeflow(n,\tstep^\ast)\right) 
		\label{eq:bound_on_nth_power_max_error}
		\\
		&\quad+ \left(\tstep^{1-n}2^n\sum^{K-1}_{k=0}\norm{\xi_k(\tstep)}^n\right)\exp\left(TC_\odeflow(n,\tstep^\ast)\right).
		\notag
	\end{align}
	Taking expectations, using \eqref{eq:upper_bound_expectation_nth_power_sum_norm_xi_sq} with $w=n$ and $v=1$, and using that $K=T/\tstep$ yields
	\begin{align*}
		\Expect\left[\max_{\ell\in[K]}\norm{e_\ell}^n\right] &\leq(4 C_\numflow\tstep^{q})^n T\exp\left(TC_\odeflow(n,\tstep^\ast)\right) 
		\\
		&\quad+ 2^n TC^n_{\xi,R}\tstep^{n(p-1/2)}\exp\left(TC_\odeflow(n,\tstep^\ast)\right).
	\end{align*}
	Rearranging the above produces the desired inequality.\qed
\end{proof}

\begin{proof}[Proof of Corollary \ref{cor:finite_mgf_lipschitz_flow}]
	Let $m\in\Naturals$ be arbitrary.
	Using \eqref{eq:bound_on_nth_power_max_error}, and applying \eqref{eq:gen_triangle_inequality} twice, we obtain
	\begin{align*}
		\max_{\ell\in[K]}\norm{e_\ell}^{nm}&\leq 2^{m-1}e^{(TC_\odeflow(n,\tstep^\ast))^m}\biggr[\left((4 C_\numflow\tstep^q)^{n}T\right)^m \\
		&+\left. (\tstep^{1-n}2^{n})^m\left(\sum^{K-1}_{k=0}\norm{\xi_k(\tstep)}^{n}\right)^m\right].
	\end{align*}
	Taking expectations and using \eqref{eq:upper_bound_expectation_nth_power_sum_norm_xi_sq} with $w=n$ and $v=m$, we obtain
	\begin{align*}
	\Expect&\left[\max_{\ell\in[K]}\norm{e_\ell}^{nm}\right]
	\\
	&\leq 2^{m-1}\exp(TC_\odeflow(n,\tstep^\ast))^m\left(\left((4 C_\numflow\tstep^q)^{n}T\right)^m \right.\\
		&\quad+\left. \left(\tstep^{1-n}2^n\right)^m \left(TC^n_{\xi,R}\tstep^{n(p + 1/2)-1}\right)^m\right).
	\end{align*}
	The conclusion follows by the series expansion of the exponential and the dominated convergence theorem.\qed
\end{proof}

\begin{proof}[Proof of Theorem \ref{thm:strong_error_sup_inside_polynomial_growth_and_bounded_noise}]
	Recall that the solution map $\odeflow^\tstep$ of the initial value problem \eqref{eq:the_ivp} satisfies
	\begin{equation*}
		%\label{eq:solution_map}
		\odeflow^\tstep(a) \defeq a + \int_0^\tstep f(\odeflow^{t}(a)) \, \rd t.
	\end{equation*}
	For any $\tstep>0$ and $a,b\in\Reals^{d}$, Assumption \ref{ass:polynomial_growth} and the integral Gr{\"o}nwall--Bellman inequality yield
	\begin{align*}
		&\norm{\odeflow^{\tstep}(a)-\odeflow^{\tstep}(b)}\\
		& = \Norm{ a-b + \int_0^{\tstep} f(\odeflow^{t}(a))-f(\odeflow^{t}(b))\, \rd t}
		%\notag
	\\
		&\leq \Norm{a-b}
		\\
		&\quad+ D\int_0^{\tstep} (1 + \norm{\odeflow^{t}(a)}^s + \norm{\odeflow^{t}(b)}^s)\norm{\odeflow^{t}(a)-\odeflow^{t}(b)}\,\rd t
		%\notag
		\\
		& \leq \Norm{a-b}\exp\left(D\int_0^{\tstep} (1 + \norm{\odeflow^{t}(a)}^s + \norm{\odeflow^{t}(b)}^s)\, \rd t\right).
		%\label{eq:local_lipschitz_continuous_flow_map_0}
	\end{align*}
	Given the boundedness hypothesis on the $(\xi_k(\tstep))_{k\in[K]}$, we may define a finite constant $C>0$ that does not depend on $\tstep$ or $k$, such that
% 	\begin{equation*}
% 		C \defeq \max\left\{ \sup_{0\leq t\leq T}\norm{u(t)}^s,\ \sup\left\{\norm{\odeflow^t(z)}^s \mid \norm{z}\leq \max_{k\in [K]}\norm{U_{k}},\ 0\leq t\leq \tstep\right\}\right\}.
% 	\end{equation*}
% 	Note that $C$ remains bounded in the limit of small $\tstep$, since the boundedness hypothesis implies that we can bound $\max_{k\in[K]} \norm{U_k}$ by the norm of $U_0$ plus a constant that remains bounded in the limit of small $\tstep$.
% 	From the preceding we have
	\begin{align*}
		\norm{\odeflow^{\tstep}(u_{k}) - \odeflow^{\tstep}(U_{k})} &\leq \norm{e_{k}} \exp\left( D\tau(1 + 2C)\right)\\
		&\leq \norm{e_{k}} (1 + C'\tstep).
	\end{align*}
	The rest of the proof follows in a similar manner to that of Theorem \ref{thm:convergence_without_iid_mean_zero_noise}. \qed
\end{proof}

\begin{proof}[Proof of Lemma \ref{lem:as_bound_on_abs_Un_sq}]
	In what follows, we shall omit the dependence of all random variables on $\omega$, with the understanding that $\omega$ is arbitrary.
	Let $n\in [K]$, where $K = T/\tstep\in\Naturals$.
	From \eqref{eq:randomised_numerical_scheme_0} we have, by \eqref{eq:Young},
	\begin{align}
		\label{eq:sq_abs_U_nplus1}
		\norm{U_{n + 1}}^2 \leq (1 + \tstep) \Norm{\numflow^{\tstep}(U_n)}^2 + (1 + \tstep^{-1}) \norm{\xi_n(\tstep)}^2 .
	\end{align}
	Taking the inner product of \eqref{eq:implicit_euler} with $\numflow^{\tstep}(U_n)$, we obtain by \eqref{eq:dissipativity_property}
	\begin{align*}
		&\Norm{\numflow^{\tstep}(U_n)}^2\\
		& = \innerprod{ \numflow^{\tstep}(U_n) }{ U_n } + \tstep \innerprod{ f(\numflow^{\tstep}(U_n)) }{ \numflow^{\tstep}(U_n) } \\
		& \leq \frac{1}{2} \left( \Norm{\numflow^{\tstep}(U_n)}^2 + \norm{U_n}^2\right) + \tstep \left(\alpha + \beta \Norm{\numflow^{\tstep}(U_n)}^2 \right) \\
		& = \Norm{\numflow^{\tstep}(U_n)}^2 \left(\frac{1}{2} + \beta\tstep\right) + \frac{1}{2} \norm{U_n}^2 + \alpha\tstep .
	\end{align*}
	Thus,
	\begin{align}
		\label{eq:bound_on_sq_abs_psi_tau_U_n}
		\Norm{\numflow^{\tstep}(U_n)}^2
		&\leq \frac{1}{1-2\beta\tstep} \left( \norm{U_n}^2 + 2\alpha\tstep \right)
		\notag
		\\
		&\leq \frac{1}{1-2 \absval{ \beta } \tstep} \left( \norm{U_n}^2 + 2\alpha\tstep \right) ,
	\end{align}
	where we used the inequality $1-2\absval{ \beta }\tstep \leq 1 + 2\beta\tstep$ for the second inequality.
	Then \eqref{eq:sq_abs_U_nplus1} and \eqref{eq:bound_on_sq_abs_psi_tau_U_n} yield
	\begin{equation}
		\label{eq:as_bound_on_abs_Un_sq}
		\norm{U_n}^2 \leq \frac{1 + \tstep}{1-2\absval{ \beta }\tstep} \left( \norm{U_{n-1}}^2 + 2\alpha\tstep \right) + \frac{1 + \tstep}{\tstep} \norm{\xi_{n-1}(\tstep)}^2 .
	\end{equation}
	Let $c_1(\tstep)\defeq \tfrac{1+2\absval{\beta}}{1-2\absval{\beta}\tstep}$ and $c_2(\tstep)\defeq \tfrac{2\alpha}{1-2\absval{\beta}\tstep}$. By \eqref{eq:as_bound_on_abs_Un_sq}, it follows that
	\begin{align*}
		&\norm{U_n}^2 - \norm{U_{n-1}}^2 
		\\
		&\quad\leq \tstep c_1(\tstep) \norm{U_{n-1}}^2
		+ (1 + \tstep) \left(\tstep c_2(\tstep) + \tstep^{-1} \norm{\xi_{n-1}(\tstep)}^2 \right) .
	\end{align*}
	Using the telescoping sum
	\begin{equation*}
		\norm{U_n}^2 = \norm{U_0}^2 + \sum^n_{i=1}\left(\norm{U_i}^2-\norm{U_{i-1}}^2\right)
	\end{equation*}
	it follows that
	\begin{align*}
		\norm{U_n}^2 \leq \norm{U_0}^2 + &\sum^n_{i=1} \left[ \tstep c_1(\tstep)\norm{U_{i-1}}^2 +\right. 
		\\
		&\quad\quad\left.(1 + \tstep) \left( \tstep c_2(\tstep) + \tstep^{-1}\norm{\xi_{i-1}(\tstep)}^2 \right) \right].
	\end{align*}
	Since $n \leq K \defeq T/\tstep$, and since the right-hand side of the inequality above is nonnegative,
	\begin{align*}
		\norm{U_n}^2 \leq &\norm{U_0}^2 + (1 + \tstep) \left( Tc_2(\tstep) + \tstep^{-1}\sum^{T/\tstep}_{i=1}\norm{\xi_{i-1}(\tstep)}^2\right)
		\\
		&+ \tstep c_1(\tstep)\sum^{n-1}_{i=0}\norm{U_i}^2.
	\end{align*}
	Applying the Gr{\"o}nwall inequality (Theorem \ref{thm:discrete_Gronwall}), yields, for all $n\in[K]$,
	\begin{align*}
		&\max_{i\in [K]} \norm{U_i}^2
		\\
		& \leq \left[\norm{U_0}^2 + (1 + \tstep) \left(Tc_2 + \frac{1}{\tstep}\sum^{T/\tstep}_{i=1} \norm{\xi_{i-1}(\tstep)}^2 \right) \right] \exp \left( Tc_1 \right) \\
		& \leq (1 + \tstep) \left[\norm{U_0}^2 + Tc_2 + \frac{1}{\tstep} \sum^{T/\tstep}_{i=1} \norm{\xi_{i-1}(\tstep)}^2\right] \exp \left( Tc_1\right)\\
		& \leq C_2 \left( 1 + \tstep^{-1}\sum^{T/\tstep}_{i=1}\norm{\xi_{i-1}(\tstep)}^2 \right),
	\end{align*}
	where we define, for $\tstep'$ as in Assumption \ref{ass:tau}, the scalar
	\begin{equation}
		\label{eq:C_2}
		C_2=(1 + \tstep')\max\left\{1, \norm{ U_0 }^2 + Tc_2(\tstep')\right\}\exp\left(Tc_1(\tstep')\right).
	\end{equation}
	This yields \eqref{eq:abs_Un} for $n=1$.
	By applying \eqref{eq:gen_triangle_inequality}, we obtain \eqref{eq:abs_Un} for arbitrary $n\in\Naturals$.\qed
\end{proof}

\begin{proof}[Proof of Proposition \ref{prop:W_k}]
	Recall that in Assumption \ref{ass:polynomial_growth}, we assume $f \in C^1(\Reals^{d}; \Reals^{d})$.
	Therefore, Taylor's theorem applied to the function $t\mapsto \odeflow^t(a)$ yields
	\begin{equation}
		\label{eq:taylor_remainder_term}
		\odeflow^\tstep(a)=a + \tstep f(a) + \tstep R^\tstep(a),
	\end{equation}
	where $R^\tstep(a)\to 0$ as $\tstep\to 0$.
	Then, by \eqref{eq:polynomial_growth_of_vector_field}, \eqref{eq:implicit_euler}, and \eqref{eq:gen_triangle_inequality},
	\begin{align}
	&\norm{\numflow^\tstep(U_k)-\odeflow^\tstep(U_k)}^{2n}
	\notag
	\\
	&\leq 2^{2n-1}\tstep^{2n} \left(\norm{ f(\numflow^\tstep(U_k))- f(U_k)	}^{2n}+  \norm{R^\tstep(U_k)}^{2n}\right).
	\label{eq:temp01}
% 	\\
% 	&\quad\left.+  \norm{R^\tstep(U_k)}^{2n}\right).
% 	\notag
	\end{align}
	By \eqref{eq:polynomial_growth_of_vector_field}, \eqref{eq:implicit_euler}, \eqref{eq:bound_on_norm_f_of_a}, and \eqref{eq:gen_triangle_inequality} with the fact that $C_\odeflow\geq 1$ in Assumption \ref{ass:polynomial_growth}, we obtain
	\begin{align*}
		&\Norm{f(U_{k})-f(\numflow^{\tstep}(U_{k}))}^{2n} \\
		& \leq C_\odeflow^{2n} \left( 1 + \norm{U_{k}}^{s} + \Norm{\numflow^{\tstep}(U_{k})}^{s} \right)^{2n} \Norm{U_{k}-\numflow^{\tstep}(U_{k})}^{2n} \\
		& = C_\odeflow^{2n} \left( 1 + \norm{U_{k}}^{s} + \Norm{\numflow^{\tstep}(U_{k})}^{s}\right)^{2n} \Norm{\tstep f(\numflow^{\tstep}(U_{k}))}^{2n}
		\\
		&\leq \tstep^{2n} C_\odeflow^{2n}\left( 1 + \norm{U_{k}}^{s} + \Norm{\numflow^{\tstep}(U_{k})}^{s}\right)^{2n}\times 
		\\
		&\quad\left(C_\odeflow\left(1 + \Norm{ \numflow^{\tstep}(U_{k})}^s\right)\norm{\numflow^\tstep(U_k)} + \norm{f(0)}\right)^{2n}
		\\
		&\leq \tstep^{2n} 3^{2(2n-1)}C_\odeflow^{4n}\left( 1 + \norm{U_{k}}^{2ns} + \Norm{\numflow^{\tstep}(U_{k})}^{2ns}\right)\times
		\\
		&\quad\left(\norm{\numflow^\tstep(U_k)}^{2n} + \Norm{ \numflow^{\tstep}(U_{k})}^{2n(s + 1)} + \norm{f(0)}^{2n}\right).
	\end{align*}
	From \eqref{eq:bound_on_sq_abs_psi_tau_U_n} and \eqref{eq:gen_triangle_inequality}, it holds that for any $n$ and $r$ such that $nr\geq 1$,
	\begin{align*}
		\norm{\numflow^\tstep(U_k)}^{2nr}&\leq \frac{2^{nr-1}}{(1-2\absval{\beta}\tstep)^{nr}}\left(\norm{U_k}^{2nr} + (2\alpha\tstep)^{nr}\right)
		\\
		&\leq \frac{2^{nr-1}}{(1-2\absval{\beta}\tstep')^{nr}}\left(\norm{U_k}^{2nr} + (2\alpha\tstep')^{nr}\right),
	\end{align*}
	for $\tstep'$ in Assumption \ref{ass:tau}.
	Applying the second inequality for the appropriate values of $r$ and computing exponents yields that, for the polynomials $\pi_1$, $\pi_2$ and $\pi$ defined on $\Reals$ by
	\begin{align*}
		\pi_1(x)&\defeq \left( 1 + x^{ns} + \frac{2^{ns-1}}{(1-2\absval{\beta}\tstep')^{ns}} \left(x^{ns} + \left(2\alpha\tstep'\right)^{ns}\right)\right)
		\\
		\pi_2(x)&\defeq\frac{2^{n(s + 1)-1}}{(1-2\absval{\beta}\tstep')^{n(s + 1)}} \times
		\\
		&\left( x^{n} + \left(2\alpha\tstep'\right)^{n} + x^{n(s + 1)}+ \left(2\alpha\tstep'\right)^{n(s + 1)} + \norm{f(0)}^{2n}\right)
	\end{align*}
	and $\pi(x)\defeq \pi_1(x)\pi_2(x)$, it follows from Lemma  \ref{lem:as_bound_on_abs_Un_sq} that
	\begin{align*}
		&\Norm{f(U_{k})-f(\numflow^{\tstep}(U_{k}))}^{2n}
		\\
		&\quad\leq \tstep^{2n}3^{2(2n-1)}C^{4n}_\odeflow\pi\left(\norm{U_k}^2\right)
		\\
		&\quad\leq \tstep^{2n}3^{2(2n-1)}C^{4n}_\odeflow\pi\left(\max_{i\in[K]}\norm{U_i}^2\right).
	\end{align*}
	Taking expectations, applying Proposition \ref{prop:bounds_moments_max_num_soln}, and using that $\tstep<\tstep'$ to bound the right-hand side of the inequality \eqref{eq:Eval_max_i_abs_Ui} in Proposition \ref{prop:bounds_moments_max_num_soln}, we may define some $C_3=C_3(\alpha,\beta,C_\odeflow,\tstep',n)$ that does not depend on $k$ or $\tstep$, such that
	\begin{align}		
		\Expect&\left[\Norm{f(U_{k})-f(\numflow^{\tstep}(U_{k}))}^{2n}\right]
		\notag
		\\
		&\leq \tstep^{2n} 3^{2(2n-1)}C_\odeflow^{4n}\Expect\left[\pi\left(\max_{i\in[K]}\norm{U_i}^2\right)\right]\qefed \tstep^{2n}C_3.
		\label{eq:C_3}
	\end{align}
	By Proposition \ref{prop:bounds_moments_max_num_soln}, the finiteness of $C_3$ follows from the hypothesis $R\geq 2n(2s + 1)$ and the observation that $\pi_1(x^2)$ and $\pi_2(x^2)$ have degree $ns$ and $n(s + 1)$ in $x^2$, respectively.

	Now it remains to show that $\norm{R^\tstep(U_k)}^{2n}\in L^1_{\mathbb{P}}$.
	From \eqref{eq:polynomial_growth_of_vector_field}, \eqref{eq:polynomial_growth_of_flow_map}, and \eqref{eq:taylor_remainder_term}, we obtain
	\begin{align}
		\tstep &\norm{R^\tstep(a)-R^\tstep(b)}
		\notag
		\\
		&=\norm{\odeflow^\tstep(a)-a-\tstep f(a)-\odeflow^\tstep(b)-b-\tstep f(b)}
		\notag
		\\
		&\leq \norm{\odeflow^\tstep(a)-\odeflow^\tstep(b)}+\norm{a-b}+\tstep\norm{f(a)-f(b)}
% 		\left(1 + \tstep C_\odeflow\left(1 + \norm{a}^s + \norm{b}^s\right)\norm{a-b}\right) + \norm{a-b} + \tstep C_\odeflow\left(1 + \norm{a}^s + \norm{b}^s\right)\norm{a-b}
		\notag
		\\
		&= 2\left(1 + \tstep C_\odeflow\left(1 + \norm{a}^s + \norm{b}^s\right)\norm{a-b}\right)
		\label{eq:polynomial_growth_of_remainder}
	\end{align}
	By the triangle inequality and \eqref{eq:polynomial_growth_of_remainder},
	% and \eqref{eq:Young} with $r=2$ and $\delta=1$,
	\begin{align*}
		&\tstep\norm{R^\tstep(U_k)}&
		% \leq\tstep^2\left( \norm{R^\tstep(0)} + \norm{R^\tstep(U_k)-R^\tstep(0)}\right)
		\\
		&\leq \tstep\left(\norm{R^\tstep(0)} + 2\left(1 + \tstep C_\odeflow (1 + \norm{U_k}^s)\norm{U_k}\right)\right)
		\\
		&\leq \tstep\left( \norm{R^\tstep(0)} + 2\left(1 + \tstep C_\odeflow (\norm{U_k} + \norm{U_k}^{s + 1})\right)\right)
		\\
		&\leq \tstep\biggr( \norm{R^\tstep(0)} + 2 \\
		&\hskip15ex\left.+ 2C_\odeflow \left(\max_{k\in[K]}\norm{U_k} + \max_{k\in[K]}\norm{U_k}^{s + 1}\right)\right).
	\end{align*}
	Then by applying \eqref{eq:gen_triangle_inequality} and Proposition \ref{prop:bounds_moments_max_num_soln} with the hypothesis that $R\geq 2n(2s + 1)\geq 2n(s + 1)$, and using the bound $\tstep<\tstep'$, it follows that we may define a positive scalar $C_4$ 
	that does not depend on $k$ or $\tstep$, such that
	\begin{align}
		\Expect&\left[\tstep^{2n}\norm{R^\tstep(U_k)}^{2n} \right]
		\notag
		\\
		&\leq \tstep^{2n}\Expect\biggr[\biggr( \norm{R^\tstep(0)} + 2 +
		\notag\\
		&\quad\quad\left.\left. 2C_\odeflow \left(\max_{k\in[K]}\norm{U_k} + \max_{k\in[K]}\norm{U_k}^{s + 1}\right)\right)^{2n}\right]
		\notag\\
		&\qefed \tstep^{2n}C_4.
		\label{eq:C_4}	
	\end{align}
	Therefore, with $C_3$ and $C_4$ as in \eqref{eq:C_3} and \eqref{eq:C_4} above, \eqref{eq:temp01} yields
	\begin{align}		
		\Expect\left[ \norm{\numflow^\tstep(U_k)-\odeflow^\tstep(U_k)}^{2n}\right]&\leq 2^{2n-1}\tstep^{4n}\left(C_3 + C_4\right)
		\notag
		\\
		\qefed C_\numflow\tstep^{4n}
		\label{eq:C_numflow}
	\end{align}
	as desired.\qed
\end{proof}

The proof below makes clear that we make absolutely no effort to find optimal constants.
\begin{proof}[Proof of Theorem \ref{thm:convergence_without_iid_mean_zero_noise_implicit_euler}]
	Let $n\in\Naturals$.
	By \eqref{eq:gen_peter_paul_inequality}
	\begin{align}
		&\norm{e_{k + 1}}^{2n}
		\notag
		\\
		&\leq (1 + \tstep 2^{2n-1})\left[(1 + \tstep 2^{2n-1})\norm{\odeflow^\tstep(u_k)-\odeflow^\tstep(U_k)}^{2n}\right.
		\notag
		\\
		&\quad\left. + (1 + (2/\tstep)^{2n-1})\norm{\odeflow^\tstep(U_k)-\numflow^\tstep(U_k)}^{2n}\right] 
		\notag
		\\
		&\quad+ (1 + (2/\tstep)^{2n-1}\norm{\xi_k(\tstep)}^{2n}.
		\notag
	\end{align}
	Since $\tstep\leq 1$ and $n\geq 1$, it holds that $1 + \tstep^{1-2n} 2^{2n-1}\leq\tstep^{1-2n}(1 + 2^{2n-1})$ and $1 + \tstep 2^{2n-1}\leq 1 + 2^{2n-1}$.
	Using these inequalities, \eqref{eq:gen_peter_paul_inequality}, and \eqref{eq:polynomial_growth_of_flow_map} in the preceding inequality, we obtain
	\begin{align*}
		&\norm{e_{k + 1}}^{2n}
		\\
		&\leq (1 + \tstep 2^{2n-1})^2\left(1 + \tstep C_\odeflow(1 + \norm{u_k}^s + \norm{U_k}^s)\right)^{2n}\norm{e_k}^{2n}
		\\
		&\quad+ (1 + 2^{2n-1})^2\tstep^{1-2n}\times
		\\
		&\quad\quad\left( \norm{\odeflow^\tstep(U_k)-\numflow^\tstep(U_k)}^{2n} + \norm{\xi_k(\tstep)}^{2n}\right).		
	\end{align*}
	Using \eqref{eq:gen_peter_paul_inequality} again, we obtain
	\begin{align*}
		&\left(1 + \tstep C_\odeflow(1 + \norm{u_k}^s + \norm{U_k}^s)\right)^{2n}\norm{e_k}^{2n}
		\\
		&\quad\leq \left[\left(1 + \tstep C_\odeflow \left(1 + \max_{\ell\in[K]}\norm{u_\ell}^s\right)\right)\norm{e_k} +\right. 
		\\
		&\hskip30ex\left.\tstep C_\odeflow\max_{\ell\in[K]}\norm{U_\ell}^s\norm{e_k}\right]^{2n}
		\\
		&\quad\leq (1 + \tstep 2^{2n-1})\left(1 + \tstep C_\odeflow \left(1 + \max_{\ell\in[K]}\norm{u_\ell}^s\right)\right)^{2n}\norm{e_k}^{2n}
		\\
		&\quad\quad + (1 + \tstep^{1-2n}2^{2n-1})(\tstep C_\odeflow)^{2n}\max_{\ell\in[K]}\norm{U_\ell}^{2ns}\norm{e_k}^{2n}
	\end{align*}
	so that by defining 
	\begin{equation*}
	 C_5=C_5 (n,s,C_\odeflow,u)\defeq \max\{2^{2n-1},C_\odeflow(1 + \norm{u}^s_\infty)\}
	\end{equation*}
	we have
	\begin{align*}
		&\left(1 + \tstep C_\odeflow(1 + \norm{u_k}^s + \norm{U_k}^s)\right)^{2n}\norm{e_k}^{2n}
		\\
		&\quad \leq \left(1 + \tstep C_5\right)^{2n + 1}\norm{e_k}^{2n} 
		\\
		&\quad\quad+ \tstep^{1-2n}(1 + 2^{2n-1})\left(\tstep C_\odeflow\max_{\ell\in[K]}\norm{U_\ell}^{s}\norm{e_k}\right)^{2n}
	\end{align*}
	and, therefore,
	\begin{align*}
		&\norm{e_{k + 1}}^{2n}-\norm{e_k}^{2n}
		\\
		&\leq\left[\left(1 + \tstep C_5\right)^{2n + 1}-1\right]\tstep^{-1}\tstep\norm{e_k}^{2n} 
		\\
		&\ + \tstep^{1-2n}(1 + 2^{2n-1})\left(\tstep C_\odeflow\max_{\ell\in[K]}\norm{U_\ell}^{s}\norm{e_k}\right)^{2n}
		\\
		&\ + (1 + 2^{2n-1})^2\tstep^{1-2n}\times
		\\
		&\hskip20ex\left( \norm{\odeflow^\tstep(U_k)-\numflow^\tstep(U_k)}^{2n} + \norm{\xi_k(\tstep)}^{2n}\right).
	\end{align*}
	By nonnegativity of $C_5$, it follows that $[(1 + \tstep C_5)^{2n + 1}-1]\tstep^{-1}$ is a polynomial of degree $2n$ in $\tstep$ with positive coefficients.
	In particular, if we recall the definition of $C_5$ and define $C_6$ by
	\begin{align}
		&C_6(C_\odeflow,n,s,(u_t)_{0\leq t\leq T},\tstep')
		\notag\\
		&\defeq\left[\left(1 + \tstep' \max\{2^{2n-1},C_\odeflow(1 + \norm{u}^s_{\infty}\}\right)^{2n + 1}-1\right](\tstep')^{-1} ,
		\label{eq:C_6}
	\end{align}
	then $C_6$ does not depend on $\tstep$, $[(1 + \tstep C_5)^{2n + 1}-1]\tstep^{-1}\leq C_6$ for all $0<\tstep<\tstep'$, and
	\begin{align*}
		&\norm{e_{k + 1}}^{2n}-\norm{e_k}^{2n}
		\\
		&\leq C_6\tstep\norm{e_k}^{2n}
		\\
		&\quad+ \tstep^{1-2n}(1 + 2^{2n-1})\left(\tstep C_\odeflow\max_{\ell\in[K]}\norm{U_\ell}^{s}\norm{e_k}\right)^{2n}
		\\
		&\quad\quad + (1 + 2^{2n-1})^2\tstep^{1-2n}\times
		\\
		&\hskip20ex\left( \norm{\odeflow^\tstep(U_k)-\numflow^\tstep(U_k)}^{2n} + \norm{\xi_k(\tstep)}^{2n}\right).
	\end{align*}
	By the telescoping sum associated to $\norm{e_{k + 1}}^{2n}-\norm{e_k}^{2n}$, the fact that $e_0=0$, the bound $1 + 2^{2n-1}\leq 2^{2n}$, the nonnegativity of the terms on the right-hand side of the inequality above, and the bound $\norm{e_j}\leq \max_{\ell\leq j}\norm{e_\ell}$, we obtain
	\begin{align*}
		&\max_{\ell\leq k + 1}\norm{e_{\ell}}^{2n}
		\\
		&\leq\tstep^{1-2n}2^{4n} \sum^{K}_{\ell=1}\left(\left(\tstep C_\odeflow\max_{j\in[K]}\norm{U_j}^{s}\norm{e_\ell}\right)^{2n} +\right. 
		\\ &\hskip25ex \norm{\odeflow^\tstep(U_\ell)-\numflow^\tstep(U_\ell)}^{2n} + \norm{\xi_\ell(\tstep)}^{2n}\biggr)
		\\
		&\quad + C_6\tstep\sum^{k}_{\ell=1} \max_{j\leq \ell}\norm{e_j}^{2n}.
	\end{align*}
	By Lemma \ref{lem:as_bound_on_abs_Un_sq},
	\begin{align*}
		&\left(\tstep C_\odeflow\max_{j\in[K]}\norm{U_j}^{s}\norm{e_\ell}\right)^{2n}
		\\
		&\leq C_\odeflow^{2n}(2C_1)^{sn}\times
		\\
		&\quad\left(\tstep^{2n}\norm{e_\ell}^{2n} + \tstep^{n(2-s)}\left(\sum^{T/\tstep}_{i=1}\norm{\xi_i(\tstep)}^2\right)^{ns}\norm{e_\ell}^{2n}\right)
	\end{align*}
	which implies that
	\begin{align*}
	&\max_{\ell\leq k + 1}\norm{e_{\ell}}^{2n}
	\\
	&\leq \tstep^{1-ns}2^{n(4 + s)}C_\odeflow^{2n}C_1^{ns} \sum^{K}_{\ell=1}\left(\sum^{T/\tstep}_{i=1}\norm{\xi_i(\tstep)}^2\right)^{ns}\norm{e_\ell}^{2n}
		\\
		&\quad + \tstep^{1-2n}2^{4n} \sum^{K}_{\ell=1}\left(\norm{\odeflow^\tstep(U_\ell)-\numflow^\tstep(U_\ell)}^{2n} + \norm{\xi_\ell(\tstep)}^{2n}\right)
		\\
		&\quad + \left(C_6\tstep + \tstep^{2n}2^{n(4 + s)}C_\odeflow^{2n}C_1^{ns}\right)\sum^{k}_{\ell=1} \max_{j\leq \ell}\norm{e_j}^{2n}.
	\end{align*}
	Define
	\begin{equation}
		\label{eq:C_7}
		C_7=C_7(n,s,C_\odeflow,C_1)\defeq 2^{n(4 + s)}C_\odeflow^{2n}C_1^{ns}.
	\end{equation}
	Since $C_\odeflow,C_1\geq 1$, it follows that $2^{4n}\leq C_7$ ,and by Gr\"{o}nwall's inequality (Theorem \ref{thm:discrete_Gronwall}) we obtain
	\begin{align*}
		&\max_{\ell\in [K]}\norm{e_{\ell}}^{2n}
		\\
		&\leq \exp\left(T(C_6 + \tstep^{2n-1}C_7)\right)C_7\times
		\\
		&\quad\left( \tstep^{1-ns}\sum^K_{\ell=1}\left(\sum^{T/\tstep}_{i=1}\norm{\xi_i(\tstep)}^2\right)^{ns}\norm{e_\ell}^{2n}\right.
		\\
		&\hskip7ex + \left.\tstep^{1-2n}\sum^K_{\ell=1}\left(\norm{\odeflow^\tstep(U_\ell)-\numflow^\tstep(U_\ell)}^{2n} + \norm{\xi_\ell(\tstep)}^{2n}\right)\right).
	\end{align*}
	Taking expectations completes the proof, provided that we can ensure each sum is of the right order in $\tstep$.
	By Proposition \ref{prop:W_k} with the hypothesis that $R\geq 2n(2s + 1)$, and by Assumption \ref{ass:p-R_regularity_condition_on_noise},
	\begin{align}
		&\tstep^{1-2n} \sum^K_{\ell=1} \Expect\left[\norm{\odeflow^\tstep(U_\ell) - \numflow^\tstep(U_\ell)}^{2n} + \norm{\xi_\ell(\tstep)}^{2n}\right]
		\notag\\
		&\leq T\left(C_\numflow\tstep^{2n} + \left(C_{\xi,R}\tstep^{p-1/2}\right)^{2n}\right).
		\label{eq:first_condition_on_p_inequality}
	\end{align}
	Thus we need $p-1/2\geq 1$ to hold.
	Next, using the bound $\norm{e_{\ell}}\leq \max_{j\in[K]}\norm{e_j}$, Young's inequality \eqref{eq:Young} with $a=(\sum^{K}_{i=1}\norm{\xi_{i}(\tstep)}^2)^{ns}$, $b=\norm{e_{\ell}}^{2n}$, and some $\delta>0$ and conjugate exponent pair $(r,r^\ast)\in (1,\infty)^2$ to be determined later, we obtain with \eqref{eq:upper_bound_expectation_nth_power_sum_norm_xi_sq} that
	\begin{align*}
		&\Expect\left[\left(\sum^{T/\tstep}_{i=1} \norm{\xi_{i}(\tstep)}^{2}\right)^{ns} \norm{e_{\ell}}^{2n}\right]
		\\
		&\leq \frac{\delta}{r} \Expect\left[\left(\sum^{T/\tstep}_{i=1} \norm{\xi_{i}(\tstep)}^2\right)^{nrs} \right] + \frac{1}{\delta^{r^\ast/r}r^\ast}\Expect\left[\max_{\ell\in[K]} \norm{e_{\ell}}^{2nr^\ast}\right]
		\\
		&\leq \frac{\delta}{r} \left(TC^2_{\xi,R}\tstep^{2p}\right)^{nrs} + \frac{1}{\delta^{r^\ast/r}r^\ast}\Expect\left[\max_{\ell\in[K]} \norm{e_{\ell}}^{2nr^\ast}\right].
	\end{align*}
	Since $R\geq 2n(2s + 1)$, the maximal value of $r$ compatible with integrability of $(\sum^{K}_{i=1}\norm{\xi_{i}(\tstep)}^2)^{nrs}$ is $r=2 + s^{-1}$.
	Since we are not interested in optimal estimates, we shall set $r=r^\ast=2$ and $\delta=\tstep^{-n(2 + s)}$.
	We thus obtain
	\begin{align*}
		&\tstep^{1-ns}\sum^K_{\ell=1}~\Expect\left[\left(\sum^{T/\tstep}_{i=1} \norm{\xi_{i}(\tstep)}^{2}\right)^{ns} \norm{e_{\ell}}^{2n}\right]
		\\
		&\leq \frac{T}{2}\tstep^{-ns} \left((TC^2_{\xi,R})^{nrs}\tstep^{-n(2 + s) + 2p(2ns)} \right.
		\\
		&\hskip30ex\left.+ \tstep^{n(2 + s)}\Expect\left[\max_{\ell\in[K]}\norm{e_\ell}^{4n}\right] \right).
	\end{align*}
	For the exponent of $\tstep$ of the first term in the parentheses, we want to ensure that $-n(2 + s) + 2p(2ns)-ns\geq 2n$, or equivalently that $p\geq \tfrac{1}{s} + \tfrac{1}{2}$.
	Comparing this condition with the condition $p-\tfrac{1}{2}\geq 1$ that arose from \eqref{eq:first_condition_on_p_inequality}, and recalling that $s\geq 1$, we observe that if $p\geq \tfrac{3}{2}$, then the preceding estimates yield
	\begin{align*}
		&\Expect\left[\max_{\ell\in [K]}\norm{e_{\ell}}^{2n}\right]
		\\
		&\leq \exp\left(T(C_6 + \tstep^{2n-1}C_7)\right)\frac{C_7 T}{2}\times
		\\
		&\hskip20ex\left( \left(TC^2_{\xi,R}\right)^{2ns} + \Expect\left[\max_{\ell\in[K]}\norm{e_\ell}^{4n}\right]\right)\tstep^{2n}.
	\end{align*}
	It remains to bound $\Expect[\max_{\ell\in[K]}\norm{e_\ell}^{4n}]$ by a constant that does not depend on $\tstep$.
	By \eqref{eq:gen_triangle_inequality}, Proposition \ref{prop:bounds_moments_max_num_soln}, and the assumption that $\tstep<\tstep'$ for $\tstep'$ in Assumption \ref{ass:tau}, we obtain
	\begin{align}
		\Expect&\left[ \max_{\ell\in[K]}\norm{e_\ell}^{4n}\right]
		\notag
		\\
		&\leq 2^{4n}\left(\max_{k\in[K]}\norm{u_k}^{4n} + \Expect\left[\max_{k\in[K]}\norm{U_k}^{4n}\right]\right)
		\notag\\
		&\leq 2^{4n}\left(\norm{u}_{\infty}^{4n} + (2C_2)^{2n}\left(1 + TC_{\xi,R}^2\tstep^{2p-1}\right)^{2n}\right)
		\notag\\
		& \leq 2^{4n}\left(\norm{u}_{\infty}^{4n} + (2C_2)^{2n}\left(1 + TC_{\xi,R}^2(\tstep')^{2p-1}\right)^{2n}\right)\qefed C_8,
		\label{eq:C_8}
	\end{align}
	where $C_8=C_8(C_2,C_{\xi,R},n,p,\tstep',T,(u_t)_{0\leq t\leq T})>0$ does not depend on $\tstep$.
	Note that in applying Proposition \ref{prop:bounds_moments_max_num_soln}, we have used that $s\geq 1$ for the exponent $s$ in Assumption \ref{ass:polynomial_growth}, since this implies that $2n(2s + 1)\geq 4n$. \qed
\end{proof}

\begin{proof}[Proof of Theorem \ref{thm:strong_error_sup_inside_continuous}]
	Let $k\in [K]$ and $t_{k}<t\leq t_{k + 1}$.
	Then
	\begin{align*}
		e(t)& = \odeflow^{t - t_{k}}(u_{k}) - \numflow^{t - t_{k}}(U_{k}) - \xi_{k}(t - t_{k}) \\
		& = \odeflow^{t - t_{k}}(u_{k}) - \odeflow^{t - t_{k}}(U_{k}) + \odeflow^{t - t_{k}}(U_{k}) 
		\\
		&\quad- \numflow^{t - t_{k}}(U_{k}) - \xi_{k}(t - t_{k}) ,
	\end{align*}
	and given that Assumption~\ref{ass:exact_flow} implies that $\odeflow^{t'}$ is Lipschitz on $\Reals^{d}$ for every $t'\geq 0$,
	\begin{align*}
		\norm{ e(t) }^{n}
		&\leq 3^{n-1}\left( ( 1 + C_\odeflow (t - t_{k}) )^{n} \norm{ e_{k} }^{n} \right. 
		\\
		&\hskip15ex\left.+ \left(C_\numflow(t - t_{k})^{q + 1}\right)^{n} + \norm{ \xi_{k}(t - t_{k}) }^{n}\right)
	\end{align*}
	by applying \eqref{eq:gen_triangle_inequality}.
	Since $t-t_{k}\leq \tstep$, it follows from the inequality above that
	\begin{align*}
		\Expect &\biggl[ \sup_{0 \leq t \leq T} \norm{ e(t) }^{n} \biggr]
		\\
		& = \Expect \biggl[ \max_{k\in [K]} \sup_{t_{k} <t \leq t_{k + 1}} \norm{ e(t) }^{n} \biggr] \\
		&\leq 3^{n-1}\left((1 + C_\odeflow\tstep)^n\Expect\left[\max_{k\in[K]}\norm{e_{k}}^{n}\right]+ \left(C_\numflow\tstep^{q + 1}\right)^n\right.
		\\
		&\quad\left.  + \Expect \left[ \max_{0 \leq k \leq K-1} \sup_{t_{k} <t \leq t_{k + 1}} \norm{ \xi_{k}(t - t_{k}) }^{n} \right]\right).
	\end{align*}
	By Assumption \ref{ass:p-R_regularity_condition_on_noise_continuous_time},
	\begin{align*}
		\Expect &\left[ \max_{0 \leq k \leq K-1} \sup_{t_{k} < t \leq t_{k + 1}} \norm{ \xi_{k}(t - t_{k}) }^{n} \right]
		\\
		&\leq\sum_{k = 0}^{K-1}\Expect \left[ \sup_{t_{k}< t \leq t_{k + 1}} \norm{ \xi_{k}(t - t_{k}) }^{n} \right]
		\\
		&\leq \frac{T}{\tstep}\left(C_{\xi,R} \tstep^{p + 1/2}\right)^{n}=TC^n_{\xi,R}\tstep^{n(p + 1/2)-1}
		\\
		&\leq T C^n_{\xi,R}\tstep^{n(p-1/2)}.
	\end{align*}
	Note that Assumption \ref{ass:p-R_regularity_condition_on_noise_continuous_time} is stronger than Assumption \ref{ass:p-R_regularity_condition_on_noise}.
	Therefore we may apply Theorem \ref{thm:convergence_without_iid_mean_zero_noise} to obtain \eqref{eq:strong_error_sup_inside_continuous}.\qed
\end{proof}

\begin{proof}[Proof of Lemma~\ref{lem:rth_power_of_scaled_integrated_BM}]
	If $r=0$, then the desired statement follows immediately.
	Therefore, let $p,r \geq 1$.
	Let $\xi_0$ be the integrated $\mathbb{P}$-Brownian motion process scaled by $\tstep^{p-1}$, so that
	\begin{align*}
		\norm{\xi_0(t)}^r
		& = \tstep^{r(p-1)}t^r \Norm{ \frac{1}{t}\int_0^t B_s \, \rd s }^r 
		\\
		&\leq \tstep^{r(p-1)}t^r\left(\frac{1}{t}\int_0^t \norm{B_s}^r \, \rd s\right) 
		\\
		&= \tstep^{r(p-1)}t^{r-1}\int_0^t \norm{B_s}^r \, \rd s,
	\end{align*}
	where we applied Jensen's inequality to the uniform probability measure on $[0,t]$.
	It follows that
	\begin{align*}
		\Expect &\biggl[ \sup_{t \leq \tstep} \norm{\xi_0(t)}^r \biggr]
		\\
		& \leq \tstep^{r(p-1)}\Expect \biggl[ \sup_{t \leq \tstep}t^{r-1}\int_0^t \norm{B_s}^r \, \rd s \biggr]
		\\
		&\leq \tstep^{r(p-1)}\tstep^{r-1}\Expect \biggl[ \sup_{t \leq \tstep}\int_0^t \norm{B_s}^r \, \rd s \biggr] \\
		& \leq \tstep^{r p-1} \int_0^\tstep \Expect \biggl[ \sup_{0 \leq t \leq \tstep} \norm{B_t}^r \biggr] \, \rd s
		\\
		&= \tstep^{rp} \Expect \biggl[ \sup_{0 \leq t \leq \tstep} \norm{B_t}^r \biggr].
	\end{align*}
	Above, we used the Fubini--Tonelli theorem to interchange expectation and integration with respect to $s$, and the fact that $\Expect \bigl[ \sup_{t \leq \tstep} \norm{B_t}^r \bigr]$ is constant with respect to the variable of integration $s$.
	For $r=1$, the Burkholder--Davis--Gundy martingale inequality \citep[Equation (2.2)]{Peskir:1996} yields
	\begin{equation*}
		\Expect\left[\sup_{0\leq t\leq \tstep}\norm{B_t}^r\right]\leq \frac{4-r}{2-r}\tstep^{r/2},
	\end{equation*}
	with $(4-r)/(2-r)=3$ for $r=1$.
	For $r>1$, Doob's inequality \citep[Equation (2.1)]{Peskir:1996} yields
	\begin{equation*}
		\Expect\left[\sup_{0\leq t\leq \tstep}\norm{B_t}^r\right]\leq \left(\frac{r}{r-1}\right)^r\tstep^{r/2}.
	\end{equation*}
	Since $r\mapsto [r/(r-1)]^r$ is continuously differentiable and monotonically decreasing on $2<r<\infty$, the desired conclusion follows.\qed
\end{proof}

% Non-BibTeX users please use
% \begin{thebibliography}{}
% 
% and use \bibitem to create references. Consult the Instructions
% for authors for reference list style.
% 
% \bibitem{RefJ}
% Format for Journal Reference
% Author, Article title, Journal, Volume, page numbers (year)
% Format for books
% \bibitem{RefB}
% Author, Book title, page numbers. Publisher, place (year)
% etc
% \end{thebibliography}

\end{document}